\documentclass[12pt]{amsart}
\usepackage{tikz-cd}
\usepackage{amssymb}
\usepackage{amsmath,amscd}
\usepackage{mathrsfs}
\usepackage{mathtools}
\usepackage{url}
\usepackage{cite}
\usepackage{fullpage}
\usepackage[hidelinks]{hyperref}
\usepackage{verbatim}
\usepackage{comment}
\usepackage{fancyvrb}
\usepackage{fvextra}
\usepackage{caption}
\usepackage{tikz}
\usepackage{tkz-graph}
\usetikzlibrary{matrix}
 \usetikzlibrary{shapes}
\usetikzlibrary{arrows,decorations.markings}
\usepackage{tikz-cd}
\usepackage{stmaryrd}
\usepackage{pdfpages}
\usepackage{color, colortbl}
\usepackage{array, adjustbox}
\usepackage{tabularray}

\pgfarrowsdeclare{bad to}{bad to}
{
  \pgfarrowsleftextend{-2\pgflinewidth}
  \pgfarrowsrightextend{\pgflinewidth}
}
{
  \pgfsetlinewidth{0.8\pgflinewidth}
  \pgfsetdash{}{0pt}
  \pgfsetroundcap
  \pgfsetroundjoin
  \pgfpathmoveto{\pgfpoint{-3\pgflinewidth}{4\pgflinewidth}}
  \pgfpathcurveto
  {\pgfpoint{-2.75\pgflinewidth}{2.5\pgflinewidth}}
  {\pgfpoint{0pt}{0.25\pgflinewidth}}
  {\pgfpoint{0.75\pgflinewidth}{0pt}}
  \pgfpathcurveto
  {\pgfpoint{0pt}{-0.25\pgflinewidth}}
  {\pgfpoint{-2.75\pgflinewidth}{-2.5\pgflinewidth}}
  {\pgfpoint{-3\pgflinewidth}{-4\pgflinewidth}}
  \pgfusepathqstroke
}

\newtheorem{thm}{Theorem}[section]
\newtheorem{prop}[thm]{Proposition}

\theoremstyle{definition}

\numberwithin{equation}{section}

\newcommand{\VV}{\mathcal{V}}

\newcommand{\NN}{\mathbb{N}}

\renewcommand{\ss}{\mathbb{S}}

\newcommand{\M}{\mathcal{M}}

\newcommand{\A}{\mathcal{A}}

\renewcommand{\tilde}{\widetilde}

\newcommand{\lstw}{\mathsf{L}\mathsf{S}_{12}}

\usepackage{wrapfig}

\renewcommand{\bar}{\overline}
\newcommand{\Mb}{\overline{\M}}



\usepackage{ytableau,tikz,varwidth, tikz-cd}
\usepackage{hyperref}

\usetikzlibrary{calc, decorations, positioning}
\usepackage[enableskew]{youngtab}
\usepackage{enumerate}
\usepackage{amssymb, mathscinet}

\usepackage[all,cmtip]{xy}

\usepackage{multicol}
\usepackage{multirow, bigstrut}
\usepackage{enumitem}

\usetikzlibrary{matrix}
\usetikzlibrary{shapes}
\usetikzlibrary{arrows, automata}
\usetikzlibrary{decorations,decorations.pathmorphing,decorations.pathreplacing,decorations.markings}
\usetikzlibrary{through}
\usetikzlibrary{decorations.pathreplacing,calligraphy}

\tikzset{every picture/.style={baseline=-.65ex} }
\tikzset{ext/.style={circle, draw,inner sep=1pt,scale=1.4},int/.style={circle,draw,fill,inner sep=1pt},nil/.style={inner sep=1pt}}

\tikzset{every loop/.style={draw}}
\tikzset{
  crossed/.style={
    decoration={markings,mark=at position .5 with {\arrow{|}}},
    postaction={decorate},
    shorten >=0.4pt}}

\tikzset{->-/.style={decoration={
    markings,
    mark=at position .9 with {\arrow{>}}},postaction={decorate}}}

\tikzstyle{every node}=[scale=1.2]

\newcommand{\CC}{\mathbb{C}}

\newcommand{\Q}{\mathbb{Q}}

\newcommand{\cM}{\mathcal{M}}

\newcommand{\sgn}{\operatorname{sgn}}

\newcommand{\MM}{\overline{\mathcal{M}}}

\newcommand{\bGK}{\overline{\GK}{}}
\newcommand{\hide}[1]{}



\newtheorem{Fact}[thm]{Fact}

\newtheorem{Theorem}[thm]{Theorem}
\newenvironment{theorem}
  {\begin{Theorem}}{\end{Theorem}}
  
\newtheorem{Lemma}[thm]{Lemma}
\newenvironment{lemma}
  {\begin{Lemma}}{\end{Lemma}}

\newtheorem{Remark}[thm]{Remark}
\newenvironment{remark}
  {\begin{Remark}\rm}{\end{Remark}}

\newtheorem{Proposition}[thm]{Proposition}
\newenvironment{proposition}
  {\begin{Proposition}}{\end{Proposition}}

\newtheorem{Corollary}[thm]{Corollary}
\newenvironment{corollary}
  {\begin{Corollary}}{\end{Corollary}}

\newtheorem{Conjecture}[thm]{Conjecture}

  \theoremstyle{definition} 

\theoremstyle{remark}

\newcommand{\gr}{\mathrm{gr}}

\newcommand{\GK}{\mathsf{GK}}

\newcommand{\myB}{\bGK^{12,1}}


\title{Getzler-Kapranov Graph Complex cohomology computations in weight 13}

\author{Marco Belli}

\thanks{
  This paper has been written as part of a semester project for the Master's of Science of Marco Belli at ETH Zürich. Professor Thomas Willwacher acted as a supervisor of this project.
}

\begin{document}

\begin{abstract}
    Following the approach of \cite{CLPW2} for computing weight graded pieces of $H^*_c(\M_{g,n})$ using the Getzler-Kapranov graph complex, we extend their results in weight 13 for the $(g,n)$ pairs with $3g+2n=28$.
\end{abstract}

\maketitle


\vspace{-.3in}

\section{Introduction}

The authors of \cite{CLPW2} study the weight graded pieces of $H^*_c(\M_{g,n})$ using graph complexes. The associated graded of the weight filtration is identified with the cohomology of the modular cooperad whose $(g,n)$ part is $H^*(\Mb_{g,n})$, it is known as the Getzler-Kapranov graph complex:
\begin{equation}
    H^*(\GK_{g,n}^k) \cong  \gr^W_k H^*_c(\M_{g,n}) := W_k H^*_c(\M_{g,n})/W_{k-1}H^*_c(\M_{g,n}).
\end{equation}

They obtain the two following results in weight 13.
\begin{prop}\label{prop:wt 13 vanishing}
    If $3g + 2n \leq 25$ then $\gr_{13}^W H^*_c(\M_{g,n}) = 0$.
\end{prop}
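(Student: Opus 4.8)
The plan is to push the very limited supply of weight-$13$ generators of the Getzler--Kapranov complex through the isomorphism $H^*(\GK_{g,n}^{13})\cong\gr_{13}^W H^*_c(\M_{g,n})$, reducing the statement to the acyclicity of a two-term complex whose nontrivial cases are finite in number.

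\textbf{Collapsing the complex.} A generator of $\GK_{g,n}$ is a stable graph $\Gamma$ of type $(g,n)$ with vertices $v$ decorated by classes in $H^{k_v}(\Mb_{g_v,n_v})$, tensored with an orientation of $E(\Gamma)$; since each $\Mb_{g_v,n_v}$ is smooth and proper its cohomology is pure, and the weight of the generator is $\sum_v k_v+2|E(\Gamma)|$. Two facts about $\Mb_{g',n'}$ are needed: $H^j(\Mb_{g',n'})=0$ for odd $j\le 9$ (no level-one cusp form of weight $\le 10$ is available, and Tate classes have even weight), and its degree-$11$ cohomology is the $\s_{12}$-isotypic space determined by Canning--Larson--Payne. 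If $\sum_v k_v+2|E(\Gamma)|=13$ then $\sum_v k_v$ is odd, so some $k_{v_0}$ is odd, hence $\ge 11$; this forces $|E(\Gamma)|\le 1$, all other $k_v$ even, and $k_{v_0}\in\{11,13\}$. Thus $\GK_{g,n}^{13}$ occupies two cohomological degrees and is the complex
\begin{equation*}
  A_{g,n}\ \xrightarrow{\ \partial\ }\ H^{13}(\Mb_{g,n}),
\end{equation*}
with target the edgeless graph and source $A_{g,n}$ (one edge, one degree-$11$ decoration, the rest trivial) the sum over the boundary divisors of $\Mb_{g,n}$ of the $\Aut$-isotypic parts of their degree-$11$ cohomology --- the self-loop term $\bigl(H^{11}(\Mb_{g-1,n+2})\bigr)_{\ZZ/2}$ and the bridge terms $\bigoplus H^{11}(\Mb_{g_0,n_0})$ over stable splittings $(g,n)=(g_0,n_0)\boxplus(g_1,n_1)$ --- and $\partial$ the total boundary Gysin pushforward. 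Hence $\gr_{13}^W H^*_c(\M_{g,n})\cong\ker\partial\oplus\coker\partial$, and it suffices to show that $\partial$ is an isomorphism whenever $3g+2n\le 25$.

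\textbf{The bulk of the range.} By Canning--Larson--Payne, $H^{11}(\Mb_{g_0,n_0})\ne 0$ only for $(g_0,n_0)$ large, with $\Mb_{1,11}$ ($3g_0+2n_0=25$) as the first case. A bridge splitting satisfies $3g+2n=(3g_0+2n_0)+(3g_1+2n_1)-4$ with $3g_1+2n_1\ge 5$, so a nonzero bridge term forces $3g+2n\ge 26$; and the self-loop term is nonzero only when $3(g-1)+2(n+2)=3g+2n+1$ is large enough for $H^{11}(\Mb_{g-1,n+2})$ to be nonzero, which for $3g+2n\le 25$ leaves only finitely many $(g,n)$. For all other $(g,n)$ in that range $A_{g,n}=0$, so $\gr_{13}^W H^*_c(\M_{g,n})=H^{13}(\Mb_{g,n})$, which vanishes by the degree-$13$ computations of Bergström--Faber--Payne and Canning--Larson--Payne (a pure Hodge structure of odd weight with no $\s_{12}$-part, hence zero).

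\textbf{The remaining pairs and the main obstacle.} For the finitely many $(g,n)$ with $3g+2n\le 25$ and $A_{g,n}\ne 0$ --- concentrated on boundary configurations built from the first moduli spaces with odd cohomology --- I must show that $\partial\colon A_{g,n}\to H^{13}(\Mb_{g,n})$ is an isomorphism. This is the crux: the collapse above is formal, but seeing that $\partial$ has neither kernel nor cokernel needs the full Canning--Larson--Payne package --- the $S_n$-equivariant structure of $H^{11}(\Mb_{g-1,n+2})$, so that the $\ZZ/2$-coinvariants of the self-loop (the half-edge swap acts as the transposition of the two loop markings times the sign on the edge orientation, so what survives is an eigenspace of that transposition) are computed correctly, together with the statement that $H^{13}(\Mb_{g,n})$ is spanned by exactly these boundary pushforwards with no relations. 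Since $3g+2n\le 25$ cuts out only finitely many pairs, an equivalent and more robust route is to assemble each two-term complex $\GK_{g,n}^{13}$ explicitly and verify acyclicity by direct computation; the discussion above simply explains why the answer is forced and where the computation concentrates.
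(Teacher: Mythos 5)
Your argument breaks at the very first step, and the error propagates through everything that follows. You take the weight of a decorated graph to be $\sum_v k_v + 2|E(\Gamma)|$ and conclude that a weight-$13$ generator can have at most one edge, so that $\GK^{13}_{g,n}$ collapses to a two-term complex $A_{g,n}\to H^{13}(\Mb_{g,n})$. But the weight grading used here (see \eqref{equ:GKweight}) is $\kappa=\sum_v k_v$ alone; the edge factors $\Q[-1]$ shift only the cohomological degree and carry no weight. This normalization is forced by known cases: the weight-$0$ graded piece of $H^*_c(\M_{g,n})$ is computed by stable graphs with trivial decorations and arbitrarily many edges, all of which would have strictly positive weight under your formula. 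Consequently the weight-$13$ complex is not two-term: as in \eqref{equ:GK13}, its generators carry one $H^{13}$-decorated vertex, or an $H^{11}\otimes H^2$-decorated pair, \emph{together with arbitrarily many weight-zero genus-zero vertices and edges}, and the differential splits vertices (increasing the edge count) rather than contracting boundary strata. The identification of $A_{g,n}$ with boundary Gysin terms, the reduction to ``$\partial$ is an isomorphism,'' and the resulting finite list of exceptional $(g,n)$ therefore do not address the complex whose cohomology is $\gr^W_{13}H^*_c(\M_{g,n})$. (Even granting your setup, the proof also defers its own crux, ending with ``verify acyclicity by direct computation.'')

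The mechanism that actually proves the proposition is the excess bound of Section \ref{sec:BlownUpRep}: after passing to the quasi-isomorphic quotient $\bGK^{12,1}_{g,n}$, every generator decomposes into blown-up components $C_i$ with $e(C_i)=3(g_i-1)+3\epsilon_i+\omega_i+2n_i\ge 0$ (the one component with $e<0$, the $\omega$--$\omega$ loop, is killed by an odd symmetry), and the identity $3g+2n=25+\sum_i e(C_i)$ shows there are no generators at all when $3g+2n\le 24$. When $3g+2n=25$ every component is an $\omega$--$j$ hair or an $\omega$-tripod, and one still has to check that this small residual complex is acyclic; that verification, not a purity count of edges, is the content of the cited result.
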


\begin{thm} \label{thm:lowexc13}
    Suppose $3g + 2n \in \{26, 27\}$. Then $\gr_{13}^W H^*_c(\M_{g,n})$ is nonzero only in degree $$k(g,n) = 3g + n - 2 - \delta_{0,n},$$ and there is an $\ss_n$-equivariant isomorphism $\gr_{13}^W H^{k(g,n)}_c(\cM_{g,n}) \cong Z_{g,n} \otimes \lstw$, where
\begin{align*}
    Z_{1,12} & \cong V_{21^{10}} & Z_{2,10} & \cong V_{1^{10}} & Z_{3,9} & \cong V_{1^{9}} \\
    Z_{4,7} & \cong V_{1^{7}} & Z_{5,6} & \cong V_{1^{6}} \oplus V_{21^4}^{\oplus 2} & Z_{6,4} & \cong V_{1^{4}} \\ Z_{7,3} & \cong V_{1^{3}} \oplus V_{21}^{\oplus 2}& Z_{8,1} & \cong \Q & Z_{9,0} & \cong \Q 
\end{align*}
\end{thm}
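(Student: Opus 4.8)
The plan is to compute the cohomology of the weight-$13$ Getzler--Kapranov complex $\GK_{g,n}^{13}$ directly, exploiting the isomorphism $H^*(\GK_{g,n}^{13})\cong\gr^W_{13}H^*_c(\M_{g,n})$ together with the Canning--Larson--Payne description of the weight-$11$ and weight-$13$ cohomology of the moduli spaces $\Mb_{g',n'}$. First I would unwind the weight grading on $\GK_{g,n}$: a generator is a stable graph $\Gamma$ of total genus $g$ with $n$ legs whose vertices $v$ carry classes in $H^*(\Mb_{g_v,n_v})$, and since $\Mb_{g_v,n_v}$ is smooth and proper the piece $H^j(\Mb_{g_v,n_v})$ is pure of weight $j$, so the total weight of a generator is the sum of the degrees of its decorations. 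The only odd weights occurring below $14$ are $11$ and $13$, carried respectively by the motives $\mathsf{S}_{12}$ and $\mathsf{L}\,\mathsf{S}_{12}$; hence any nonzero weight-$13$ generator is a graph all of whose decorations are tautological except that either one vertex carries a weight-$11$ class and a single weight-$2$ tautological factor sits somewhere, or one vertex carries a weight-$13$ class. By Canning--Larson--Payne, in either case the ``$\mathsf{S}_{12}$-part'' is a pullback of the generator $\omega\in H^{11}(\Mb_{1,11})$, and the remaining weight-$2$ twist is realized by a $\psi$-, $\kappa_1$- or boundary class. The output is a presentation of $\GK_{g,n}^{13}$ as $\lstw$ tensored with an explicit ``Tate'' graph complex $\mathsf{B}_{g,n}$: generators are stable genus-$g$ graphs with $n$ legs carrying one distinguished $\omega$-vertex (recording where $\mathsf{S}_{12}$ lives, with its genus and valence tightly constrained by the forgetful map to $\Mb_{1,11}$) together with one weight-$2$ tautological decoration, and the differential is induced by the edge contractions of $\GK$.

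Establishing that the natural map $\GK_{g,n}^{13}\to\mathsf{B}_{g,n}\otimes\lstw$ is a quasi-isomorphism of complexes is the step I expect to be the main obstacle: it rests on the full strength of the Canning--Larson--Payne computations of $H^{11}$ and $H^{13}$ of all $\Mb_{g',n'}$ in the relevant range, on their description of the cup-product and pushforward relations among the generating classes, and on acyclicity arguments in the style of Willwacher's graph-complex calculus that discard graphs with tadpoles, repeated structures, or other ``bad'' edges. Once this is in place, pinning down the differential on $\mathsf{B}_{g,n}$ is bookkeeping: it combines edge contractions of the underlying graph with the comultiplication on $H^*(\Mb)$ and the behaviour of $\omega$, $\psi$ and boundary classes under the gluing maps.

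Granting the reduction, what remains is a finite computation for each of the nine pairs $(g,n)$ with $3g+2n\in\{26,27\}$. For each pair I would enumerate the finitely many contributing stable graphs --- the $\omega$-vertex together with the weight-$2$ decoration rigidly pins down the genus distribution and forces the other vertices to be tautologically trivial of small valence --- then assemble the $\ss_n$-equivariant cochain groups degree by degree, reading the $\ss_n$-action off the representation theory of $H^*(\Mb_{1,11})$ and of $H^*(\Mb_{0,m})$ (each graph contributing an induction from the stabilizer of its distribution of legs among vertices), and finally compute the ranks of the contraction differentials while tracking the $\ss_n$-structure. For the pairs with many marked points relative to the genus --- notably $(1,12)$, $(2,10)$, $(3,9)$ --- this is a sizeable equivariant linear-algebra problem and is naturally done by computer; for $(8,1)$ and $(9,0)$ it is essentially by hand. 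One then reads off that the cohomology is concentrated in the single degree $k(g,n)=3g+n-2-\delta_{0,n}$, with concentration in that one degree coming from the shape of $\mathsf{B}_{g,n}$ --- e.g.\ via a filtration whose associated graded is acyclic away from that degree, exactly as in the weight-$11$ case --- and that in that degree the $\ss_n$-representation is the asserted $Z_{g,n}$.

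As a consistency check I would compare each of the nine answers with the $\ss_n$-equivariant Euler characteristic of $\mathsf{B}_{g,n}\otimes\lstw$, which is a combinatorial sum over stable graphs that can be evaluated independently of the cohomology computation, and with Proposition~\ref{prop:wt 13 vanishing} in the boundary cases where forgetting a marked point drops $3g+2n$ below $26$. Agreement of the Euler characteristic with the single-degree answer is the cleanest confirmation that no cohomology has been missed.
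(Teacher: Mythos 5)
Your proposal follows essentially the same route as the paper: this theorem is quoted from \cite{CLPW2}, and the reduction you describe --- from $\GK_{g,n}^{13}$ to a combinatorial complex built from the Canning--Larson--Payne generators $\omega_B$ and $Z_{B\subseteq A}$ with a weight-$2$ twist, followed by a finite per-$(g,n)$ computation via elimination on the contraction differential --- is exactly the machinery recalled in Sections 2 and 3. The only caveat is that the concentration in a single degree is an outcome of the case-by-case elimination rather than of a uniform filtration argument (note that $(9,0)$ already sits one degree below the top, and in excess $28$ the cohomology spreads over two degrees).
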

Recall that the action of $\ss_n$ on all cohomology groups is induced by the permutation of the marked points in $\M_{g,n}$. The resulting vector spaces are decomposed according to the irreducible representations $V_{\lambda}$ of $\ss_n$ corresponding to each partition $\lambda$ of $n$.

With our computations we extend to the following case.
\begin{thm} \label{thm:excess28}
    Suppose $3g + 2n =28$. Then $\gr_{13}^W H^*_c(\M_{g,n})$ vanishes outside the degrees $$k_1(g,n) = 3g + n - 2 \hspace{.5cm}\text{and}\hspace{.5cm} k_2(g,n) = 3g + n - 3$$ and there are $\ss_n$-equivariant isomorphisms $$ \gr_{13}^W H^{k_1(g,n)}_c(\cM_{g,n}) \cong Z_{g,n} \otimes \lstw  \hspace{1cm} \gr_{13}^W H^{k_2(g,n)}_c(\cM_{g,n}) \cong W_{g,n} \otimes \lstw , $$ where
\begin{align*}
    Z_{2,11} &\cong V_{21^{9}}\oplus V_{221^7} \oplus V_{31^8}^{\oplus 2} & Z_{4,8} &\cong V_{21^6}\oplus V_{221^4}\oplus V_{31^5}^{\oplus 3} & Z_{6,5} &\cong V_{21^3}\oplus V_{221}\oplus V_{31^2}^{\oplus 3}
\end{align*}
\begin{align*}
    Z_{8,2}&=0 &  W_{2,11}&=V_{1^{11}}^{\oplus 2} & W_{4,8}&=V_{1^8}^{\oplus 2} & W_{6,5}&=V_{1^5}^{\oplus 2} & W_{8,2}&=V_{1^2}
\end{align*}
\end{thm}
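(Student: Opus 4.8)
The plan is to follow the method of \cite{CLPW2} essentially verbatim, pushing the computation one value of $3g+2n$ further. Recall that $\gr^W_{13}H^*_c(\M_{g,n})\cong H^*(\GK^{13}_{g,n})$ and that in \cite{CLPW2} the weight-$13$ Getzler--Kapranov complex is reduced to a small, explicit complex: the weight-$13$ part of $H^*(\Mb_{g',n'})$ for every moduli space occurring as a vertex decoration is a sum of copies of $\lstw$, so $\GK^{13}_{g,n}$ is identified with $\lstw$ tensored with a finite-dimensional complex $C^\bullet_{g,n}$ of $\ss_n$-representations spanned by stable graphs of type $(g,n)$ whose vertices carry tautological classes together with the weight-$11$ ``$\omega$''-generators or the weight-$13$ $\lstw$-generators. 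Thus the entire problem becomes the combinatorial and $\ss_n$-representation-theoretic computation of $H^*(C^\bullet_{g,n})$ for the relevant $(g,n)$.

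Since $3g+2n=28$ forces $g$ even, the cases are $(g,n)\in\{(0,14),(2,11),(4,8),(6,5),(8,2)\}$. For $g=0$ the space $\M_{0,14}$ has only Tate cohomology, so $\gr^W_{13}=0$ and there is nothing to prove. For each of the four remaining pairs I would proceed as follows. (i) Assemble the input: the weight-$\le 13$ cohomology, with its $\ss_{n_v}$-module structure, of every $\Mb_{g_v,n_v}$ that can occur as a vertex — these are all small, since contracting the edges of a genus-$g$, $n$-legged stable graph only decreases the ``budget'' $3g+2n$ — using the closed-form descriptions of the weight-$11$ and weight-$13$ cohomology of $\Mb_{g,n}$ from \cite{CLPW2} and its predecessors together with the standard tables for genus-$0$ and other low $(g,n)$. (ii) Enumerate the decorated stable graphs contributing to $C^\bullet_{g,n}$, organized by cohomological degree. (iii) Write down the differential (the Feynman-transform differential: splitting vertices / un-contracting edges, including the terms that transport $\omega$- and $\lstw$-classes across edges, exactly as in \cite{CLPW2}). (iv) Compute the cohomology of the resulting complex of $\ss_n$-modules.

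From this I would read off that $H^*(C^\bullet_{g,n})$ is concentrated in the two cohomological degrees $k_1(g,n)=3g+n-2$ and $k_2(g,n)=3g+n-3$, and identify the degree-$k_1$ and degree-$k_2$ pieces with $Z_{g,n}$ and $W_{g,n}$ respectively; the concentration claim — no cohomology outside these two degrees — is itself part of what the explicit complex delivers, with the vanishing in degrees above $k_1$ also following from the general weight-$13$ degree bounds of \cite{CLPW2}. As independent consistency checks I would (a) compute the $\ss_n$-equivariant Euler characteristic of $C^\bullet_{g,n}$ and compare it with $[Z_{g,n}]-[W_{g,n}]$ in the representation ring of $\ss_n$, and (b) verify compatibility with the $3g+2n\in\{26,27\}$ answers of Theorem~\ref{thm:lowexc13} under the natural forgetful and gluing operations relating consecutive values of $3g+2n$; in particular the $Z_{g,n}$ table at $3g+2n=28$ should ``refine'' the pattern visible at $26$ and $27$.

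The main obstacle is not conceptual but the scale of the bookkeeping: for $(2,11)$ and especially $(4,8)$ the $\ss_n$-representations involved are large, the number of decorated graphs is considerable, and one must compute the differential and decompose its kernel and cokernel into irreducibles correctly — in effect a computer-assisted linear-algebra computation in categories of $\ss_n$-representations, one step beyond the computation performed in \cite{CLPW2}. A secondary, more insidious difficulty is ensuring the input is complete in weight $\le 13$: a single missed $\lstw$-class in a space such as $\Mb_{1,13}$ or $\Mb_{2,n}$ with $n\ge 10$ would corrupt the output with no internal warning, which is why I would rely on the closed-form $\ss_n$-module characterizations of weight-$11$ and weight-$13$ cohomology rather than on ad hoc tables, and treat the Euler-characteristic and restriction-compatibility checks above as essential safeguards.
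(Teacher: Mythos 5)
Your plan is, in outline, exactly the strategy the paper follows: identify $\gr^W_{13}H^*_c(\M_{g,n})$ with $H^*(\GK^{13}_{g,n})$, strip off the $\lstw$ factor, reduce to a finite complex of decorated stable graphs, and compute its cohomology as an $\ss_n$-module. Your handling of $(0,14)$ and your observation that $g$ must be even are correct, and your proposed consistency checks are sensible. However, for a statement of this kind the theorem \emph{is} the computation, and your proposal stops exactly where the proof has to begin: you do not enumerate the generators, do not exhibit the differential, and do not produce the multiplicities in $Z_{g,n}$ and $W_{g,n}$ or the concentration in the two degrees $k_1,k_2$ (the latter is a genuinely computational output, not a formal consequence of degree bounds, since a priori the complex lives in four adjacent degrees, with $10-n$ through $13-n$ edges). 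As written, the argument establishes only that the answer is computable in principle, which was already clear from \cite{CLPW2}.

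There is also a substantive organizational device in the paper that your per-$(g,n)$ plan misses and that is what makes the computation feasible and trustworthy. Rather than running four separate computations for $(2,11),(4,8),(6,5),(8,2)$, the paper works with \emph{virtual blown-up representations}: one deletes the special genus-one vertex, records the resulting components with $\omega$, $\epsilon$, $j$ labels, and discards the "free" components ($\omega$--$j$ hairs and $\omega$-tripods) whose count is the only thing distinguishing the four $(g,n)$ pairs within the excess class $3g+2n=28$. This reduces the whole problem to a single list of $106$ virtual graphs and three differential matrices on which Gaussian elimination is performed once, uniformly in $(g,n)$; the existence range of each virtual graph then tells you which cancellations apply to which pair (this is also how one sees, e.g., that $Z_{8,2}=0$ while $Z_{6,5}\neq 0$). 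Doing the four computations independently, as you propose, is not wrong, but it multiplies the bookkeeping you correctly identify as the main risk, and it forfeits the cross-$(g,n)$ coherence that the paper uses as its principal internal check. To turn your proposal into a proof you would need to either carry out the four computations and reconcile them, or adopt the excess/virtual-blow-up formalism and supply the resulting generator list and differential matrices.
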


\section{Graph Complexes in weight 13}

In this section, we recall the definition of the Getzler-Kapranov graph complex and its simplified version obtained as a quasi-isomorphic quotient in \cite[Section 2]{CLPW2}, as well as the presentations of the relevant cohomology groups.

\subsection{The Getzler-Kapranov graph complex in weight 13}
In the following we will consider stable graphs $\Gamma\in\Gamma((g,n))$ equipped with a decoration $(\gamma_v)_{v\in V(\Gamma)}$ of their vertices by cohomology classes $\gamma_v \in H^{*}(\MM_{g_v,n_v})$ which behaves tensorially in $v$, and with an ordering of the internal edges $(e_i)_{i\leq |E(\Gamma)|}$ subject to the relations $(\sigma e_i)_i=sgn(\sigma)(e_i)_i$ for all $\sigma\in\ss_{E(\Gamma)}$; we will refer to such graphs $(\Gamma,\gamma\otimes e)$ shortly as \emph{decorated graphs}. Isomorphisms $\phi:\Gamma\xrightarrow{\sim}\Gamma'$ of stable graphs act on the decorated graphs with underlying graph $\Gamma$ by traslations:
\[ \phi_* (\Gamma,(\gamma_v)_{v}\otimes (e_i)_i) = (\Gamma',(\gamma_v)_{\phi(v)}\otimes (\phi e_i)_i). \]

The Getzler-Kapranov graph complex has a presentation by the space of coinvariants of decorated graphs under isomorphisms:
\begin{multline}\label{equ:GKdef}
    \GK_{g,n} = 
    \bigg( \bigoplus_{\Gamma\in\Gamma((g,n))} \bigotimes_{v\in V(\Gamma)}H^{*}(\MM_{g_v,n_v})\otimes \Q[-1]^{\otimes |E(\Gamma)|} \bigg) \Big/ \sim \\
    = \bigoplus_{[\Gamma]\in[\Gamma((g,n))]} \bigg(
    \bigotimes_{v\in V(\Gamma)}H^{*}(\MM_{g_v,n_v})\otimes \Q[-1]^{\otimes |E(\Gamma)|} \bigg) \Big/ \text{Aut}_{\Gamma},
\end{multline}
where the $\Gamma((g,n))$ are stable graphs and $[\Gamma((g,n))]$ their isomorphism classes, of which there are only finitely for every pair $(g,n)$. Automorphisms of stable graphs are understood to fix the $n$ labeled hairs and to act by the sign of their induced permutation on the internal edges $E(\Gamma)$.

The \emph{weight} of a decorated graph $(\Gamma,\gamma\otimes e)$ with all $\gamma_v$ of pure cohomological degree $k_v$ is $\kappa=\sum_v k_v$. For each $\kappa\in\NN$, the homogeneous slice of weight $\kappa$ of $\GK_{g,n}$ is then given by 
\begin{equation}\label{equ:GKweight}
    \GK^{\kappa}_{g,n} 
    = \bigoplus_{[\Gamma]\in[\Gamma((g,n))]} \bigoplus_{\substack{\lambda\,\text{partition} \\ \text{of}\,\kappa}} \bigg( \bigoplus_{\substack{k:V(\Gamma)\rightarrow \NN \\ k\,\text{respects}\,\lambda}}
    \bigotimes_{v\in V(\Gamma)}H^{k_v}(\MM_{g_v,n_v})\otimes \Q[-1]^{\otimes |E(\Gamma)|} \bigg) \Big/ \text{Aut}_{\Gamma},
\end{equation}

The grading of the complex $\GK_{g,n}$ is given by the weight plus the number of internal edges of the stable graphs. For the definition of the differential we refer to \cite{CLPW2}; in \ref{subsec:differential} we will describe its action on the graphical depictions of the generators.

Since $H^{k}(\MM_{g,n})=0$ for odd $k<11$ \cite{BergstromFaberPayne}, the only weight labelings $k:V(\Gamma)\rightarrow\NN$ with $13=\sum_v k_v$ giving rise to non trivial decorated graphs are the ones with zero weight on every vertex except either $(A)$ $k_{\bar{v}}=13$ or $(B)$ $k_{\bar{v}}=11$, $k_{\tilde{v}}=2$ for exactly one special vertex $\bar{v}$ and in case $(B)$ a second vertex $\tilde{v}$. We will denote by $N_{\bar{v}}$ and $N_{\tilde{v}}$ the hairs and half-edges at $\bar{v}$ and $\tilde{v}$. We can reduce further the coinvariants by summing over all possible choices of $\bar{v}$ in case $(A)$ and $\bar{v},\tilde{v}$ in case $(B)$, and requiring that automorphisms fix these vertices:

\begin{multline}\label{equ:GK13}
    \GK^{13}_{g,n}
    = \bigoplus_{\substack{ [\Gamma]\in[\Gamma((g,n))] \\ \bar{v}\in V(\Gamma)}}
    \left( H^{13}(\MM_{g_{\bar{v}},n_{\bar{v}}}) \otimes \Q[-1]^{\otimes |E(\Gamma)|} \right)/\text{Aut}_{\Gamma,\bar{v}} \\
    \oplus \bigoplus_{\substack{ [\Gamma]\in[\Gamma((g,n))] \\ \bar{v}\neq\tilde{v}\in V(\Gamma)}}
    \left( H^{11}(\MM_{g_{\bar{v}},n_{\bar{v}}}) \otimes H^{2}(\MM_{g_{\tilde{v}},n_{\tilde{v}}}) \otimes \Q[-1]^{\otimes |E(\Gamma)|} \right)/\text{Aut}_{\Gamma,\bar{v},\tilde{v}} .
\end{multline}

\subsection{Presentations of cohomology groups} \label{subsec:CohomologyGroups}

The $\Q$-Hodge structure on the cohomology of the moduli space of curves delivers the following decompositions.
\begin{equation} \label{equ:GroupsHodgeDeco}
    H^{13}(\MM_{g,n})\otimes\CC = H^{12,1}(\MM_{g,n})\oplus H^{1,12}(\MM_{g,n})
\end{equation}
\[ H^{11}(\MM_{g,n})\otimes\CC = H^{11,0}(\MM_{g,n})\oplus H^{0,11}(\MM_{g,n}) \]
\[ H^{2}(\MM_{g,n})\otimes\CC = H^{1,1}(\MM_{g,n}) \]
In this paper we are interested in cohomology classes $\gamma_v\in H^k(\MM_{g,n})$ when viewed as decorations of a vertex of genus $g$ and valence $n$ in some graph $\Gamma$. Since it is cumbersome to bring along $\gamma$ whenever we want to reference a specific decorated graph $(\Gamma,\gamma)$, we will encode this datum in symbols drawn onto the vertex being decorated. The goal is to have graphical depictions that determine uniquely all relevant cohomology classes, so that just by the drawing it is possible to unambiguosly determine what is the decoration at each vertex.

We will describe these graphical depictions for the one-vertex decorated graphs $(*_{g,n},\gamma)$ of genus $g$ and with $n$ hairs, labeled by a set $N$. The depiction of a general decorated graph $(\Gamma,\gamma\otimes e)$ is understood to be given by the one of each one-vertex graph $(*_{g_v,n_v},\gamma_v)$ for every vertex $v$ of $\Gamma$.

To understand automorphisms of decorated graphs we will have to keep track of the $\ss_n$-action on each cohomolgy group induced by a permutation of $N$, which in some cases involves a sign representation.

In the following drawings, solid lines represent the minimum amount of edges \emph{necessary} for the considered class to exist, whereas dashed lines represent a \emph{potential} existence of edges.

\subsubsection{The case $k=0$, $g=0$, $n\geq 3$}
We have $H^{0,0}(\MM_{0,n})=\CC$ and thus we can draw weight $0$ vertices without any graphical depiction.
\[
\begin{tikzpicture}
    \node[int] (v) at (0,0) {};
    \draw (v) edge[dashed] +(.4,0)  edge +(-.4,.4) edge[dashed] +(.4,.4);
    \draw (v) edge +(-.4,0) edge +(-.4,-.4)  edge[dashed] +(.4,-.4);
\end{tikzpicture}
:= (*_{0,n},1)
\]

\subsubsection{The case $k=(1,1)$, $g=1$,$n=1$}
$H^{1,1}(\MM_{1,1})$ is one dimensional, spanned by a class that we call $\delta_{irr}$. Since this is the only case where a non special vertex might have genus $1$, we will introduce a symbolic loop with a crossed edge and draw the node black as if it were a genus $0$ vertex. As $n=1$, there is no $\ss_n$-action on $\delta_{irr}$ to talk about.
\[
\begin{tikzpicture}
    \node[int] (v) at (0,0) {};
    \draw (v) edge +(0,-.3) edge[loop, crossed, distance=0.8cm] (v);
\end{tikzpicture}
:= (*_{1,1},\delta_{irr})
\]

\subsubsection{The case $k=(1,1)$, $g=0$} For this case we refer to \cite[Section 3]{PayneWillwacher21}.
In $g=0$, $H^{1,1}(\MM_{0,n})$ is non zero only for $n\geq 4$; so we operate under this assumption.
The group is generated by classes $\psi_i$ for every $1\leq i\leq n$ and $\delta\{\substack{A \\ A'}\}$ for every partition $A\sqcup A'=N$ with $|A|,|A'|\geq 2$. To depict $\delta\{\substack{A \\ A'}\}$ we split symbolically the vertex in two parts connected by a crossed edge and draw on one side the subset of hairs $A$ and on the other $A'$. The notation $\{\substack{A \\ A'}\}$ is chosen to signify that swapping $A$ and $A'$ doesn't change the class, which graphically means it doesn't matter on which sides the two sets of hairs are chosen to be drawn. The $\ss_n$-action is given by $\sigma\,\psi_i = \psi_{\sigma i}$ and $\sigma\,\delta\{\substack{A \\ A'}\}=\delta\{\substack{\sigma A \\ \sigma A'}\}$.
\[
    \begin{tikzpicture}
        \node[int] (v) at (0,0) {};
        \draw (v) edge[dashed] +(.6,0)  edge +(-.6,.5) edge[->-] +(.6,.5);
        \draw (v) edge[dashed] +(-.6,0) edge +(-.6,-.5)  edge +(.6,-.5);
        \node (x) at (.8,.5) {\small $i$};
    \end{tikzpicture}
    := (*_{0,n},\psi_i)
    \hspace{1cm}
    \begin{tikzpicture}
        \node[int] (v) at (0,0) {};
        \node[int] (w) at (0.5,0) {};
        \draw (v) edge +(-.5,-.5) edge +(-.5,.5) edge[dashed] +(-.5,.2) edge[dashed] +(-.2,.5) edge[dashed] +(-.5,-.2) edge[dashed] +(-.2,-.5) edge[crossed] (w)
        (w) edge +(.5,-.5) edge +(.5,.5) edge[dashed] +(.5,.2) edge[dashed] +(.5,-.2) edge[dashed] +(.2,.5) edge[dashed] +(.2,-.5);
        \draw [decorate,decoration={brace,amplitude=5pt,mirror}]
        (-.7,.5) -- (-.7,-.5) node[midway,xshift=-1em]{$A$};
        \draw [decorate,decoration={brace,amplitude=5pt}]
        (1.1,.5) -- (1.1,-.5) node[midway,xshift=1em]{$A'$};
    \end{tikzpicture}
    := (*_{0,n},\delta\{\substack{A \\ A'}\})
\]
There are two equivalent families that generate the relations in $H^{1,1}(\MM_{0,n})$. For any three pairwise distinct $i,x,y\in N$, or for any $i\neq j\in N$, it holds: 
\begin{equation} \label{equ:Relations2}
    \psi_i = \sum_{\substack{A\sqcup A'=N,|A|,|A'|\geq 2 \\ i\in A,x,y\in A'}} \delta\{\substack{A \\ A'}\}  \hspace{2cm}  \psi_i+\psi_j = \sum_{\substack{A\sqcup A'=N,|A|,|A'|\geq 2 \\ i\in A,j\in A'}} \delta\{\substack{A \\ A'}\}.
\end{equation}
So the $\psi_i$ classes are actually superfluous in the case $g=0$, but algebraically they can be more convenient to work with. The dimension of $H^{1,1}(\MM_{0,n})$ turns out to be $2^{n-1}- \binom{n}{2} -1$, in particular when $n=4$ any non-zero class forms a basis.

\subsubsection{The case $k=(11,0)$, $g=1$} This case is studied in \cite[Section 2]{CLP}. $H^{11,0}(\MM_{1,n})$ is non zero only for $n\geq 11$; so we operate under this assumption. The group is generated by classes $\omega_B$ for every ordered subset $B\subseteq N$ with $|B|=11$, where any two orderings are equivalent up to the sign of the permutation of $B$ that relates them. This means that the underlying set $B$ determines $\omega_B$ up to sign. Thus, if we choose a canonical ordering on $N$, we can stipulate that every subset comes equipped with the unique increasing ordering. We draw arrows onto the hairs contained in $B$ to depict the $\omega_B$ decoration.
\[
\begin{tikzpicture}
    \node[ext] (v) at (0,0) {};
    \node at (-.62, .1) {$\scriptscriptstyle \vdots$};
    \draw (v) edge[->-] +(-.6,-.5) edge[->-] +(-.6,-.3) edge[->-] +(-.3,-.5)
    edge[->-] +(-.6,.5) edge[->-] +(-.6,.3) edge[->-] +(-.3,.5) 
    edge[dashed] +(.6,.5) edge[dashed] +(.6,+.3)  edge[dashed] +(.3,+.5)
    edge[dashed] +(.6,-.5) edge[dashed] +(.6,-.3) edge[dashed] +(.3,-.5) ;
    \draw [decorate,decoration={brace,amplitude=5pt,mirror}]
  (-.8,.5) -- (-.8,-.5) node[midway,xshift=-1em]{$B$};
    \draw [decorate,decoration={brace,amplitude=5pt}]
    (.7,.5) -- (.7,-.5) node[midway,xshift=1em]{$B^c$};
\end{tikzpicture}
:= (*_{1,n},\omega_B)
\]
A generating set of relations is amongst the classes $\omega_B$ whose set $B$ is contained in the same size $12$ subset. Namely, if we choose a canonical ordering on $N$, then for every $E=\{e_1,...,e_{12}\}\subseteq N$ with $e_i$ increasing there is the relation
\begin{equation} \label{equ:Relations11}
    \sum_{i=1}^{12} (-1)^i \omega_{E\setminus e_i} =0
\end{equation}
Therefore, choosing a distinguished hair $e\in N$ (for example $e=1$), the classes $\omega_B$ with $e\in B$ form a basis of $H^{11,0}(\MM_{1,n})$.

The $\ss_n$-action on $\omega_B$ is given by $\sigma\,\omega_B=\omega_{\sigma B}$, which is equal to $\sgn \sigma \,\omega_B$ if $\sigma$ preserves $B$ setwise.

\subsubsection{The case $k=(12,1)$, $g=1$} This case is studied in \cite[Section 4.2]{CLPW}.
$H^{12,1}(\MM_{1,n})$ is non zero only for $n\geq 12$; so we operate under this assumption. The group is generated by classes $Z_{B\subseteq A}$ for every subset $A\subseteq N$ with $|A^c|\geq 2$ and ordered subset $B\subseteq A$ with $|B|=10$, where any two orderings are equivalent up to the sign of the permutation of $B$ that relates them. We draw arrows onto the hairs contained in $B$, and we split symbolically the vertex in a genus $1$ and a genus $0$ vertex, where we attach the hairs in $A$ and hairs in $A^c$ respectively.
\[
\begin{tikzpicture}
    \node[ext] (v) at (0,0) {};
    \node at (-.62, .1) {$\scriptscriptstyle \vdots$};
    \node[int] (w) at (.8, 0) {};
    \draw (v) edge[->-] +(-.6,-.5) edge[->-] +(-.6,-.3) edge[->-] +(-.3,-.5)
    edge[->-] +(-.6,.5) edge[->-] +(-.6,.3) edge[->-] +(-.3,.5) 
    edge[dashed] +(.6,.5) edge[dashed] +(.6,+.3)  edge[dashed] +(.3,+.5)
    edge[dashed] +(.6,-.5) edge[dashed] +(.6,-.3) edge[dashed] +(.3,-.5) edge[crossed] (w);
    \draw (w) edge +(.6,.5) edge[dashed] +(.6,+.3)  edge[dashed] +(.3,+.5)
    edge +(.6,-.5) edge[dashed] +(.6,-.3) edge[dashed] +(.3,-.5);
    \draw [decorate,decoration={brace,amplitude=5pt,mirror}]
    (-.8,.5) -- (-.8,-.5) node[midway,xshift=-1em]{$B$};
    \draw [decorate,decoration={brace,amplitude=5pt}]
    (1.5,.5) -- (1.5,-.5) node[midway,xshift=1em]{$A^c$};
    \draw [decorate,decoration={brace,mirror,amplitude=5pt}]
    (-.7,-.6) -- (.7,-.6) node[midway,yshift=-0.8em]{\small $A$};
\end{tikzpicture}
:= (*_{1,n},Z_{B\subseteq A})
\]
A generating set of relations is amongst the classes $Z_{B\subseteq A}$ having $|A^c|=2$ and same set $B\sqcup A^c$. Namely, if we choose a canonical ordering on $N$, then for every $E=\{e_1,...,e_{12}\}\subseteq N$ with $e_i$ increasing and every $1\leq i<j<k\leq 12$ we have the relation
\begin{equation} \label{equ:Relations13}
    (-1)^{i+j}Z_{E\setminus e_i,e_j \subseteq N\setminus e_i,e_j} - (-1)^{i+k}Z_{E\setminus e_i,e_k\subseteq N\setminus e_i,e_k} + (-1)^{j+k}Z_{E\setminus e_j,e_k\subseteq N\setminus e_j,e_k} = 0
\end{equation}
For this subset $E\subseteq N$, choosing a distinguished element $\tilde{e}\in E$ (for example $\tilde{e}=e_1$), the subspace $H_{A_2}^E$ spanned by the classes $Z_{B\subseteq A}$ with $B\sqcup A^c=E$ has basis the ones with $\tilde{e}\in A^c$, of which there are $11$. So $H^{12,1}(\MM_{1,n})$ decomposes into a direct sum $H_{A_3} \oplus \bigoplus_{|E|=12} H_{A_2}^E$, where $H_{A_3}$ has basis the classes with $|A^c|\geq 3$.

The $\ss_n$-action on $Z_{B\subseteq A}$ is given by $\sigma\,Z_{B\subseteq A}=Z_{\sigma B\subseteq \sigma A}$, which is equal to $\sgn \sigma \,Z_{B\subseteq \sigma A}$ if $\sigma$ preserves $B$ setwise.

\subsection{Action of the differential on the generators} \label{subsec:differential}
The differential $d$ of the graph complex $\GK^{\kappa}_{g,n}$, acts by summing over every vertex and every way of splitting that vertex with its decoration \cite[Section 2.6]{CLPW2}.

Let us first describe the image $d((*_{g,n},\gamma))$ of one-vertex decorated graphs $(*_{g,n},\gamma)$ under $d$. Every possible splitting is of the form $(*'-*'',\gamma'\otimes\gamma'')$, where $*'-*''$ is the connection of two vertices $*'_{g',n'},*''_{g'',n''}$ with $g=g'+g''$, $n=n'+n''+2$, and $\gamma',\gamma''$ are their respective decorations.
The hairs of $*'$ and $*''$ form a partition of $N$, the set of hairs of $*_{g,n}$. $\gamma'$ (and similarly $\gamma''$) is obtained from $\gamma$ by pullback along the map $\MM_{g',n'+1}\rightarrow \MM_{g,n}$ determined by the subset of $N$ ending up on $*'$. For the computation of the pullbacks of cohomology classes we refer to \cite{PayneWillwacher21},\cite{CLP} and \cite{CLPW} for the weight $2$, weight $11$ and weight $13$ cases respectively, in this paper we limit ourselves to translating those computations in graphical form.\\
The image under the differential is given by summing over these two-vertex decorated graphs for all possible splittings. In the case of interest, we always have either $g'=g''=g=0$ or $g'=g=1,g''=0$, so the only determining datum of the splitting is the partition of the hairs of $*_{g,n}$.

\begin{equation} \label{equ:Diff1}
\begin{tikzpicture}
    \node[int] (v) at (0,0) {};
    \draw (v) edge[dashed] +(.4,0)  edge +(-.4,.4) edge +(.4,.4);
    \draw (v) edge[dashed] +(-.4,0) edge +(-.4,-.4)  edge +(.4,-.4);
\end{tikzpicture}
\hspace{.3in}
\xlongrightarrow{d}
\hspace{.3in}
\sum_{\substack{S\sqcup S' = N \\ |S|,|S'|\geq 2}}
\begin{tikzpicture}
    \node[int] (v) at (0,0) {};
    \node[int] (w) at (.6,0) {};
    \draw (v) edge[dashed] +(-.4,0)  edge +(-.4,.4) edge +(-.4,-.4) edge (w);
    \draw (w) edge[dashed] +(.4,0) edge +(.4,.4)  edge +(.4,-.4);
    \draw [decorate,decoration={brace,amplitude=5pt,mirror}]
    (-.5,.5) -- (-.5,-.5) node[midway,xshift=-1em]{$S$};
    \draw [decorate,decoration={brace,amplitude=5pt}]
    (1.1,.5) -- (1.1,-.5) node[midway,xshift=1em]{$S'$};
\end{tikzpicture}
\end{equation}

\begin{equation} \label{equ:Diff2irr}
    \begin{tikzpicture}
        \node[int] (v) at (0,0) {};
        \draw (v) edge +(0,-.3) edge[loop, crossed, distance=0.8cm] (v);
    \end{tikzpicture}
\hspace{.1in}
\xlongrightarrow{d}
\hspace{.1in}
0 \hspace{.3in} \text{because the valence of $*_{1,1}$ is less than $2$}
\end{equation}

\begin{multline} \label{equ:Diff2}
\text{For any choice of $x,y\in A$ and $x',y'\in A'$, the image can be expressed as follows:}\\
\begin{tikzpicture}
    \node[int] (v) at (0,0) {};
    \node[int] (w) at (0.5,0) {};
    \draw (v) edge +(-.5,-.5) edge +(-.5,.5) edge[dashed] +(-.5,.2) edge[dashed] +(-.2,.5) edge[dashed] +(-.5,-.2) edge[dashed] +(-.2,-.5) edge[crossed] (w)
    (w) edge +(.5,-.5) edge +(.5,.5) edge[dashed] +(.5,.2) edge[dashed] +(.5,-.2) edge[dashed] +(.2,.5) edge[dashed] +(.2,-.5);
    \draw [decorate,decoration={brace,amplitude=5pt,mirror}]
    (-.7,.5) -- (-.7,-.5) node[midway,xshift=-1em]{$A$};
    \draw [decorate,decoration={brace,amplitude=5pt}]
    (1.1,.5) -- (1.1,-.5) node[midway,xshift=1em]{$A'$};
\end{tikzpicture}
\hspace{.05in}
\xlongrightarrow{d}
\hspace{.05in}
-\sum_{\substack{\tilde{A}\subset A \\ x,y\in\tilde{A}}}
\begin{tikzpicture}
    \node[int] (v) at (0,0) {};
    \node[int] (w) at (0.5,0) {};
    \node[int] (z) at (1,0) {};
    \node[] (x) at (-.6,.5) {\tiny $x$};
    \node[] (y) at (-.6,.2) {\tiny $y$};
    \draw (v)  edge +(-.5,.5) edge +(-.5,.2) edge[dashed] +(-.2,.5)  edge[crossed] (w)
    (w) edge (z) edge +(-.5,-.5) edge[dashed] +(-.5,-.3) edge[dashed] +(-.2,-.5);
    \draw (z) edge +(.5,-.5) edge +(.5,.5) edge[dashed] +(.5,.2) edge[dashed] +(.5,-.2) edge[dashed] +(.2,.5) edge[dashed] +(.2,-.5);
    \draw [decorate,decoration={brace,amplitude=5pt,mirror}]
    (-.8,.6) -- (-.8,0) node[midway,xshift=-1em]{\tiny $\tilde{A}$};
    \draw [decorate,decoration={brace,amplitude=5pt,mirror}]
    (-.2,0) -- (-.2,-.5) node[midway,xshift=-1.5em]{\tiny $A\setminus\tilde{A}$};
    \draw [decorate,decoration={brace,amplitude=5pt}]
    (1.6,.5) -- (1.6,-.5) node[midway,xshift=1em]{$A'$};
\end{tikzpicture}
-\sum_{\substack{\tilde{A}\subset A' \\ x,y\in\tilde{A}}}
\begin{tikzpicture}
    \node[int] (z) at (-1,0) {};
    \node[int] (v) at (-0.5,0) {};
    \node[int] (w) at (0,0) {};
    \node[] (x) at (.7,.5) {\tiny $x'$};
    \node[] (y) at (.7,.2) {\tiny $y'$};
    \draw (z) edge +(-.5,-.5) edge +(-.5,.5) edge[dashed] +(-.5,.2) edge[dashed] +(-.2,.5) edge[dashed] +(-.5,-.2) edge[dashed] +(-.2,-.5) edge (v);
    \draw (v) edge[crossed] (w) edge +(.5,-.5) edge[dashed] +(.5,-.3) edge[dashed] +(.2,-.5);
    \draw (w) edge +(.5,.5) edge +(.5,.2) edge[dashed] +(.2,.5) ;
    \draw [decorate,decoration={brace,amplitude=5pt,mirror}]
    (-1.7,.5) -- (-1.7,-.5) node[midway,xshift=-1em]{$A$};
    \draw [decorate,decoration={brace,amplitude=5pt}]
    (.9,.6) -- (.9,0) node[midway,xshift=1em]{\tiny $\tilde{A}$};
    \draw [decorate,decoration={brace,amplitude=5pt}]
    (.2,0) -- (.2,-.5) node[midway,xshift=1.5em]{\tiny $A'\setminus\tilde{A}$};
\end{tikzpicture}\\
+\sum_{\substack{S\subset A' \\ |S|\geq 2}}
\begin{tikzpicture}
    \node[int] (z) at (-1,0) {};
    \node[int] (v) at (-0.5,0) {};
    \node[int] (w) at (0,0) {};
    \draw (z) edge +(-.5,-.5) edge +(-.5,.5) edge[dashed] +(-.5,.2) edge[dashed] +(-.2,.5) edge[dashed] +(-.5,-.2) edge[dashed] +(-.2,-.5) edge[crossed] (v);
    \draw (v) edge (w) edge +(.5,-.5) edge[dashed] +(.5,-.3) edge[dashed] +(.2,-.5);
    \draw (w) edge +(.5,.5) edge +(.5,.2) edge[dashed] +(.2,.5) ;
    \draw [decorate,decoration={brace,amplitude=5pt,mirror}]
    (-1.7,.5) -- (-1.7,-.5) node[midway,xshift=-1em]{$A$};
    \draw [decorate,decoration={brace,amplitude=5pt}]
    (.6,.6) -- (.6,0) node[midway,xshift=1em]{\tiny $S$};
    \draw [decorate,decoration={brace,amplitude=5pt}]
    (.2,0) -- (.2,-.5) node[midway,xshift=1.5em]{\tiny $A'\setminus S$};
\end{tikzpicture}
+\sum_{\substack{S\subset A \\ |S|\geq 2}}
\begin{tikzpicture}
    \node[int] (v) at (0,0) {};
    \node[int] (w) at (0.5,0) {};
    \node[int] (z) at (1,0) {};
    \draw (v)  edge +(-.5,.5) edge +(-.5,.2) edge[dashed] +(-.2,.5)  edge[crossed] (w)
    (w) edge (z) edge +(-.5,-.5) edge[dashed] +(-.5,-.3) edge[dashed] +(-.2,-.5);
    \draw (z) edge +(.5,-.5) edge +(.5,.5) edge[dashed] +(.5,.2) edge[dashed] +(.5,-.2) edge[dashed] +(.2,.5) edge[dashed] +(.2,-.5);
    \draw [decorate,decoration={brace,amplitude=5pt,mirror}]
    (-.6,.6) -- (-.6,0) node[midway,xshift=-1em]{\tiny $S$};
    \draw [decorate,decoration={brace,amplitude=5pt,mirror}]
    (-.2,0) -- (-.2,-.5) node[midway,xshift=-1.5em]{\tiny $A\setminus S$};
    \draw [decorate,decoration={brace,amplitude=5pt}]
    (1.6,.5) -- (1.6,-.5) node[midway,xshift=1em]{$A'$};
\end{tikzpicture}
\end{multline}
\vspace{-.02in}
In the second term of \eqref{equ:Diff11}, the weight $11$ decoration $\omega_B$ becomes $\omega_{B\setminus\tilde{b}\sqcup q}$, where $q$ is the newly added half-edge to the genus $1$ vertex and takes the place of $\tilde{b}$ in the ordering of $B$.
\vspace{-.02in}
\begin{equation} \label{equ:Diff11}
    \begin{tikzpicture}
        \node[ext] (v) at (0,0) {};
        \node at (-.62, .1) {$\scriptscriptstyle \vdots$};
        \draw (v) edge[->-] +(-.6,-.5) edge[->-] +(-.6,-.3) edge[->-] +(-.3,-.5)
        edge[->-] +(-.6,.5) edge[->-] +(-.6,.3) edge[->-] +(-.3,.5) 
        edge[dashed] +(.6,.5) edge[dashed] +(.6,+.3)  edge[dashed] +(.3,+.5)
        edge[dashed] +(.6,-.5) edge[dashed] +(.6,-.3) edge[dashed] +(.3,-.5) ;
        \draw [decorate,decoration={brace,amplitude=5pt,mirror}]
      (-.8,.5) -- (-.8,-.5) node[midway,xshift=-1em]{$B$};
        \draw [decorate,decoration={brace,amplitude=5pt}]
        (.7,.5) -- (.7,-.5) node[midway,xshift=1em]{$B^c$};
    \end{tikzpicture}
    \hspace{.05in}
    \xlongrightarrow{d}
    \hspace{.05in}
    \sum_{\substack{S\subseteq B^c \\ |S|\geq 2}}
    \begin{tikzpicture}
        \node[ext] (v) at (0,0) {};
        \node at (-.62, .1) {$\scriptscriptstyle \vdots$};
        \node[int] (z) at (.5,0) {};
        \draw (v) edge[->-] +(-.6,-.5) edge[->-] +(-.6,-.3) edge[->-] +(-.3,-.5)
        edge[->-] +(-.6,.5) edge[->-] +(-.6,.3) edge[->-] +(-.3,.5) 
        edge[dashed] +(.6,-.5) edge[dashed] +(.6,-.3) edge[dashed] +(.3,-.5) edge (z);
        \draw (z) edge +(.6,.5) edge +(.6,+.3)  edge[dashed] +(.3,+.5);
        \draw [decorate,decoration={brace,amplitude=5pt,mirror}]
      (-.8,.5) -- (-.8,-.5) node[midway,xshift=-1em]{$B$};
        \draw [decorate,decoration={brace,amplitude=5pt}]
        (1.2,.6) -- (1.2,0) node[midway,xshift=1em]{\tiny $S$};
        \draw [decorate,decoration={brace,amplitude=5pt}]
        (.7,0) -- (.7,-.5) node[midway,xshift=1.5em]{\tiny $B^c\setminus S$};
    \end{tikzpicture}
    +\sum_{\substack{\varnothing\neq S\subseteq B^c \\ \tilde{b}\in B}}
    \begin{tikzpicture}
        \node[ext] (v) at (0,0) {};
        \node at (-.62, .1) {$\scriptscriptstyle \vdots$};
        \node[int] (z) at (.7,0) {};
        \node (b) at (1.4,.6) {\tiny $\tilde{b}$};
        \draw (v) edge[->-] +(-.6,-.3) edge[->-] +(-.6,-.5) edge[->-] +(-.3,-.5)
        edge[->-] +(-.6,.3) edge[->-] +(-.3,.5) 
        edge[dashed] +(.6,-.5) edge[dashed] +(.6,-.3) edge[dashed] +(.3,-.5) edge[->-] (z);
        \draw (z) edge +(.6,.5) edge +(.6,+.3)  edge[dashed] +(.3,+.5);
        \draw [decorate,decoration={brace,amplitude=5pt,mirror}]
      (-.7,.5) -- (-.7,-.5) node[midway,xshift=-1.3em]{\tiny $B\setminus\tilde{b}$};
        \draw [decorate,decoration={brace,amplitude=5pt}]
        (1.4,.35) -- (1.4,0) node[midway,xshift=1em]{\tiny $S$};
        \draw [decorate,decoration={brace,amplitude=5pt}]
        (.9,0) -- (.9,-.5) node[midway,xshift=1.5em]{\tiny $B^c\setminus S$};
    \end{tikzpicture}
\end{equation}
\vspace{-.02in}
For any fixed choice of $x,y\in A^c$, the image of a weight $13$ decoration can be expressed as follows. In the two terms that create weight $11$ and $2$ vertices, the newly created weight $11$ decoration $\omega_{B\sqcup p}$, where $p$ is the half-edge at the genus $1$ vertex, is understood to have the ordering inherited from $B$ with $p$ appended at the end.
\vspace{-.04in}
\begin{multline} \label{equ:Diff12}
\begin{tikzpicture}
    \node[ext] (v) at (0,0) {};
    \node at (-.62, .1) {$\scriptscriptstyle \vdots$};
    \node[int] (w) at (.8, 0) {};
    \draw (v) edge[->-] +(-.6,-.5) edge[->-] +(-.6,-.3) edge[->-] +(-.3,-.5)
    edge[->-] +(-.6,.5) edge[->-] +(-.6,.3) edge[->-] +(-.3,.5) 
    edge[dashed] +(.6,.5) edge[dashed] +(.6,+.3)  edge[dashed] +(.3,+.5)
    edge[dashed] +(.6,-.5) edge[dashed] +(.6,-.3) edge[dashed] +(.3,-.5) edge[crossed] (w);
    \draw (w) edge +(.6,.5) edge[dashed] +(.6,+.3)  edge[dashed] +(.3,+.5)
    edge +(.6,-.5) edge[dashed] +(.6,-.3) edge[dashed] +(.3,-.5);
    \draw [decorate,decoration={brace,amplitude=5pt,mirror}]
    (-.8,.5) -- (-.8,-.5) node[midway,xshift=-1em]{$B$};
    \draw [decorate,decoration={brace,amplitude=5pt}]
    (1.5,.5) -- (1.5,-.5) node[midway,xshift=1em]{$A^c$};
\end{tikzpicture}
\xlongrightarrow{d}
\sum_{\substack{\tilde{S}\subseteq A\setminus B \\ |\tilde{S}|\geq 2}}
\begin{tikzpicture}
    \node[ext] (v) at (0,0) {};
    \node at (-.62, .1) {$\scriptscriptstyle \vdots$};
    \node[int] (w) at (.8, 0) {};
    \node[int] (z) at (.2,.5) {};
    \draw (v) edge (z) edge[->-] +(-.6,-.5) edge[->-] +(-.6,-.3) edge[->-] +(-.3,-.5)
    edge[->-] +(-.6,.5) edge[->-] +(-.6,.3) edge[->-] +(-.3,.5) 
    edge[dashed] +(.6,-.5) edge[dashed] +(.6,-.3) edge[dashed] +(.3,-.5) edge[crossed] (w);
    \draw (z) edge +(.6,.5) edge +(.6,+.3)  edge[dashed] +(.3,+.5);
    \draw (w) edge +(.6,.5) edge[dashed] +(.6,+.3)  edge[dashed] +(.3,+.5)
    edge +(.6,-.5) edge[dashed] +(.6,-.3) edge[dashed] +(.3,-.5);
    \draw [decorate,decoration={brace,amplitude=5pt,mirror}]
    (-.8,.5) -- (-.8,-.5) node[midway,xshift=-1em]{$B$};
    \draw [decorate,decoration={brace,amplitude=5pt}]
    (.9,1.1) -- (.9,.6) node[midway,xshift=1em]{\tiny $\tilde{S}$};
    \draw [decorate,decoration={brace,amplitude=5pt}]
    (1.5,.5) -- (1.5,-.5) node[midway,xshift=1em]{$A^c$};
\end{tikzpicture}
+\sum_{\substack{\varnothing\neq\tilde{S}\subseteq A\setminus B \\ \tilde{b}\in B}}
\hspace{-.06in}
\begin{tikzpicture}
    \node[ext] (v) at (0,0) {};
    \node at (-.62, .1) {$\scriptscriptstyle \vdots$};
    \node[int] (w) at (.8, 0) {};
    \node[int] (z) at (.2,.5) {};
    \node (b) at (-.3,.8) {\tiny $\tilde{b}$};
    \draw (v) edge[->-] (z) edge[->-] +(-.6,-.5) edge[->-] +(-.6,-.3) edge[->-] +(-.3,-.5)
    edge[->-] +(-.6,.5) edge[->-] +(-.6,.3)
    edge[dashed] +(.6,-.5) edge[dashed] +(.6,-.3) edge[dashed] +(.3,-.5) edge[crossed] (w);
    \draw (z) edge +(.6,.5) edge[dashed] +(.6,+.3)  edge[dashed] +(.3,+.5) edge +(-.4,.5);
    \draw (w) edge +(.6,.5) edge[dashed] +(.6,+.3)  edge[dashed] +(.3,+.5)
    edge +(.6,-.5) edge[dashed] +(.6,-.3) edge[dashed] +(.3,-.5);
    \draw [decorate,decoration={brace,amplitude=5pt,mirror}]
    (-.7,.5) -- (-.7,-.5) node[midway,xshift=-1.3em]{\tiny$B\setminus\tilde{b}$};
    \draw [decorate,decoration={brace,amplitude=5pt}]
    (.9,1.1) -- (.9,.6) node[midway,xshift=1em]{\tiny $\tilde{S}$};
    \draw [decorate,decoration={brace,amplitude=5pt}]
    (1.5,.5) -- (1.5,-.5) node[midway,xshift=1em]{$A^c$};
\end{tikzpicture} \\
\sum_{\substack{S\subset A^c \\ |S|\geq 2}}
\begin{tikzpicture}
    \node[ext] (v) at (0,0) {};
    \node at (-.62, .1) {$\scriptscriptstyle \vdots$};
    \node[int] (w) at (.8, 0) {};
    \node[int] (z) at (1.3,0) {};
    \draw (v) edge[->-] +(-.6,-.5) edge[->-] +(-.6,-.3) edge[->-] +(-.3,-.5)
    edge[->-] +(-.6,.5) edge[->-] +(-.6,.3) edge[->-] +(-.3,.5) 
    edge[dashed] +(.6,.5) edge[dashed] +(.6,+.3)  edge[dashed] +(.3,+.5)
    edge[dashed] +(.6,-.5) edge[dashed] +(.6,-.3) edge[dashed] +(.3,-.5) edge[crossed] (w);
    \draw (w) edge +(.6,-.5) edge[dashed] +(.6,-.3) edge[dashed] +(.3,-.5) edge (z);
    \draw (z) edge +(.6,.5) edge +(.6,+.3)  edge[dashed] +(.3,+.5);
    \draw [decorate,decoration={brace,amplitude=5pt,mirror}]
    (-.8,.5) -- (-.8,-.5) node[midway,xshift=-1em]{$B$};
    \draw [decorate,decoration={brace,amplitude=5pt}]
    (2,.6) -- (2,0) node[midway,xshift=1em]{\tiny $S$};
    \draw [decorate,decoration={brace,amplitude=5pt}]
    (1.5,0) -- (1.5,-.5) node[midway,xshift=1.5em]{\tiny $A^c\setminus S$};
\end{tikzpicture}
+\sum_{\varnothing\neq\tilde{S}\subseteq A\setminus B}
\begin{tikzpicture}
    \node[ext] (v) at (0,0) {};
    \node at (-.62, .1) {$\scriptscriptstyle \vdots$};
    \node[int] (w) at (.7, 0) {};
    \node[int] (z) at (1.4,0) {};
    \draw (v) edge[->-] +(-.6,-.5) edge[->-] +(-.6,-.3) edge[->-] +(-.3,-.5)
    edge[->-] +(-.6,.5) edge[->-] +(-.6,.3) edge[->-] +(-.3,.5) 
    edge[dashed] +(.6,-.5) edge[dashed] +(.6,-.3) edge[dashed] +(.3,-.5) edge[->-] (w);
    \draw (w)  edge[crossed] (z) edge +(.4,.5) edge[dashed] +(.6,+.4)  edge[dashed] +(.2,+.5);
    \draw (z) edge +(.6,.5) edge[dashed] +(.6,+.3)  edge[dashed] +(.3,+.5)
    edge +(.6,-.5) edge[dashed] +(.6,-.3) edge[dashed] +(.3,-.5);
    \draw [decorate,decoration={brace,amplitude=5pt,mirror}]
    (-.8,.5) -- (-.8,-.5) node[midway,xshift=-1em]{$B$};
    \draw [decorate,decoration={brace,amplitude=5pt}]
    (2.1,.5) -- (2.1,-.5) node[midway,xshift=1em]{$A^c$};
    \draw [decorate,decoration={brace,amplitude=5pt}]
    (.8,.5) -- (1.3,.5) node[midway,yshift=1em]{\tiny $\tilde{S}$};
\end{tikzpicture} \\
-\sum_{\varnothing\neq\tilde{S}\subseteq A\setminus B}
\begin{tikzpicture}
    \node[ext] (v) at (0,0) {};
    \node at (-.62, .1) {$\scriptscriptstyle \vdots$};
    \node[int] (w) at (.7, 0) {};
    \node[int] (z) at (1.4,0) {};
    \draw (v) edge[->-] +(-.6,-.5) edge[->-] +(-.6,-.3) edge[->-] +(-.3,-.5)
    edge[->-] +(-.6,.5) edge[->-] +(-.6,.3) edge[->-] +(-.3,.5) 
    edge[dashed] +(.6,-.5) edge[dashed] +(.6,-.3) edge[dashed] +(.3,-.5) edge[crossed] (w);
    \draw (w)  edge (z) edge +(.4,.5) edge[dashed] +(.6,+.4)  edge[dashed] +(.2,+.5);
    \draw (z) edge +(.6,.5) edge[dashed] +(.6,+.3)  edge[dashed] +(.3,+.5)
    edge +(.6,-.5) edge[dashed] +(.6,-.3) edge[dashed] +(.3,-.5);
    \draw [decorate,decoration={brace,amplitude=5pt,mirror}]
    (-.8,.5) -- (-.8,-.5) node[midway,xshift=-1em]{$B$};
    \draw [decorate,decoration={brace,amplitude=5pt}]
    (2.1,.5) -- (2.1,-.5) node[midway,xshift=1em]{$A^c$};
    \draw [decorate,decoration={brace,amplitude=5pt}]
    (.8,.5) -- (1.3,.5) node[midway,yshift=1em]{\tiny $\tilde{S}$};
\end{tikzpicture}
-\sum_{\substack{S\subset A^c  \\ x,y\in S}}
\begin{tikzpicture}
    \node[ext] (v) at (0,0) {};
    \node at (-.62, .1) {$\scriptscriptstyle \vdots$};
    \node[int] (w) at (.7, 0) {};
    \node[int] (z) at (1.3,0) {};
    \node (x) at (2,.5) {\tiny $x$};
    \node (y) at (2,.2) {\tiny $y$};
    \draw (v) edge[->-] +(-.6,-.5) edge[->-] +(-.6,-.3) edge[->-] +(-.3,-.5)
    edge[->-] +(-.6,.5) edge[->-] +(-.6,.3) edge[->-] +(-.3,.5) 
    edge[dashed] +(.6,.5) edge[dashed] +(.6,+.3)  edge[dashed] +(.3,+.5)
    edge[dashed] +(.6,-.5) edge[dashed] +(.6,-.3) edge[dashed] +(.3,-.5) edge[->-] (w);
    \draw (w) edge +(.6,-.5) edge[dashed] +(.6,-.3) edge[dashed] +(.3,-.5) edge[crossed] (z);
    \draw (z) edge +(.6,.5) edge +(.6,+.3)  edge[dashed] +(.3,+.5);
    \draw [decorate,decoration={brace,amplitude=5pt,mirror}]
    (-.8,.5) -- (-.8,-.5) node[midway,xshift=-1em]{$B$};
    \draw [decorate,decoration={brace,amplitude=5pt}]
    (2.2,.6) -- (2.1,0) node[midway,xshift=1em]{\tiny $S$};
    \draw [decorate,decoration={brace,amplitude=5pt}]
    (1.4,-.1) -- (1.4,-.6) node[midway,xshift=1.5em]{\tiny $A^c\setminus S$};
\end{tikzpicture}
\end{multline}
\vspace{-.03in}
We denote by $d_v(\Gamma,\gamma\otimes e)$ the decorated graph obtained by replacing the vertex $v$ and its decoration $\gamma_v$ by $d(*_{g,n},\gamma_v)$, gluing it's hairs to the hairs and neighbours of $v$ in $\Gamma$; the newly created edge is understood to be placed last in the ordering $e$. Thus, the total differential takes the form
\vspace{-.03in}
\[ \text{case (A) graphs: \;\;} (\Gamma,\gamma\otimes e) \xlongrightarrow{d} \hspace{.3cm} d_{\bar{v}}(\Gamma,\gamma\otimes e) \hspace{.1cm}  + \sum_{\bar{v}\neq v\in V(G)} d_v(\Gamma,\gamma\otimes e) \]
\[ \text{case (B) graphs: \;\;} (\Gamma,\gamma\otimes e) \xlongrightarrow{d} \hspace{.3cm} d_{\bar{v}}(\Gamma,\gamma\otimes e) \hspace{.1cm}  + d_{\tilde{v}}(\Gamma,\gamma\otimes e) +\sum_{\bar{v},\tilde{v}\neq v\in V(G)} d_v(\Gamma,\gamma\otimes e). \]

\subsection{The simplified graph complex in weight 13} \label{sec:SimplifiedGK}

The $\Q$-Hodge decompositions in \eqref{equ:GroupsHodgeDeco} give us $\GK_{g,n}^{13}\otimes\CC = \GK_{g,n}^{12,1} \oplus \GK_{g,n}^{1,12}$, where $\GK_{g,n}^{12,1}$ is obtained as in \eqref{equ:GK13} by replacing $H^{13}, H^{11}$ and $H^{2}$ by $H^{12,1}, H^{11,0}$ and $H^{1,1}$ respectively. After quotienting by an appropriate subspace closed under the differential, in \cite[Section 2.2]{CLPW2} they obtain a quasi-isomorphic complex $\GK_{g,n}^{13}\twoheadrightarrow \bGK_{g,n}^{13}$ generated by a much smaller set of decorated graphs. 
This quasi-isomorphism preserves the above $\Q$-Hodge decomposition: $\bGK_{g,n}^{13}\otimes\CC = \bGK_{g,n}^{12,1} \oplus \bGK_{g,n}^{1,12}$.
From now on we will focus on the $\bGK_{g,n}^{12,1}$ part, which we now describe.

Let $G\subseteq \GK_{g,n}^{12,1}$ be the subspace spanned by decorated graphs with the following properties:
\begin{enumerate}
    \item[1)] Weight zero vertices have genus zero, valence at least $3$ and don't have loops. There are no multiple edges, except possibly for edges incident at $\bar{v}$ or $\tilde{v}$.
    \item[2)] In both cases $(A)$ and $(B)$ the special vertex $\bar{v}$ has $g_{\bar{v}}=1$.
    \item[2b)] In case $(B)$ the special vertex $\tilde{v}$ has either $g_{\tilde{v}}=0$, or $g_{\tilde{v}}=1$ and $n_{\tilde{v}}=1$.
\end{enumerate}

Let $R\subseteq G$ be the subspace generated by the following relations:
\begin{enumerate}
    \item[3a)] Case $(A)$ graphs with a loop $(s,t)$ at $\bar{v}$ and decorated by a $Z_{B\subseteq A}$ with $|A^c|\geq 3$, $s,t\in A^c$ are set to zero.
    \item[3b)] Case $(B)$ graphs with $g_{\tilde{v}}=0$ and a loop at $\tilde{v}$ decorated by a class in the image of the pullback $H^2(\MM_{1,n_{\tilde{v}}-2})\xrightarrow{\xi_{*}} H^2(\MM_{0,n_{\tilde{v}}})$ are set to zero. 
    \item[4)] Case $(A)$ graphs with a loop $(s,t)$ at $\bar{v}$ and decorated by $Z_{B\subseteq A}$ with $A^c=\{s,t\}$ are identified with $\frac{1}{12}$ times the case $(B)$ graph obtained by adding a genus $1$ vertex $\tilde{v}$, connecting it only to $\bar{v}$ through an edge $(p',p)$, redecorating $\bar{v}$ with the weight $11$ class $\omega_{B\sqcup p}$ and decorating $\tilde{v}$ with the weight $2$ class $\delta_{irr}$.
\end{enumerate}

\begin{proposition} \label{bGKprojection}
    The projection $\GK_{g,n}^{12,1} \twoheadrightarrow G/R =: \bGK_{g,n}^{12,1}$ is a quasi-isomorphism.
\end{proposition}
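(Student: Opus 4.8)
The plan is to deduce the proposition from the acyclicity of the kernel of the projection: for a surjection of cochain complexes, being a quasi-isomorphism is equivalent to having acyclic kernel. Write $\cS\subseteq\GK^{12,1}_{g,n}$ for the span of the decorated graphs violating one of the conditions (1), (2), (2b), so that $\GK^{12,1}_{g,n}=\cS\oplus G$ and the kernel of the projection is $K=\cS\oplus R$. First I would check that $K$ is a subcomplex, i.e. that $R$ is closed under $d$ modulo $\cS$; this is a finite verification against the explicit differential formulas \eqref{equ:Diff1}, \eqref{equ:Diff2irr}, \eqref{equ:Diff2}, \eqref{equ:Diff11}, \eqref{equ:Diff12}, the only substantial point being that the two sides of the identification in (4) have matching differentials once one tracks the term that sprouts a $\delta_{irr}$-decorated vertex.

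Granting this, I would prove $H^*(K)=0$ in successive stages, each a further quotient whose kernel carries an explicit contracting homotopy, following the pattern of the Kontsevich--Willwacher-type simplifications of graph complexes used in \cite{CLPW2}. \emph{Stage 1 (condition (1)):} the subcomplexes spanned by graphs with a tadpole at a weight-zero vertex, a weight-zero vertex of valence $<3$, or a multiple edge not incident to $\bar v$ or $\tilde v$ are acyclic via the standard homotopies that insert, respectively contract, the offending local configuration. \emph{Stage 2 (condition (2)):} a weight-zero vertex of genus $g_0\ge1$ is decorated only by $1\in H^0(\Mb_{g_0,n_0})$, and it is removed by a homotopy built from the vertex splittings \eqref{equ:Diff1}, using that in the cohomological degrees relevant here $H^{\le 13}(\Mb_{g_0,n_0})$ is generated by boundary classes together with the $\omega$- and $Z$-classes (the structural results of \cite{BergstromFaberPayne}, \cite{CLP}, \cite{CLPW}); the same input, together with $H^{12,1}(\Mb_{0,n})=0$ and the fact that for $g\ge2$ the relevant weight-$13$ (and weight-$11$) cohomology is accounted for by boundary strata, forces $\bar v$ down to genus $1$. \emph{Stage 3 (condition (2b)):} $\tilde v$ is decorated by a class in $H^{1,1}$, which for genus $\ge 2$, or genus $1$ with $n_{\tilde v}\ge 2$, is generated by the boundary divisors $\delta\{\substack{A \\ A'}\}$ and the $\psi_i$ (relations \eqref{equ:Relations2}); a homotopy splitting $\tilde v$ along these divisors reduces it to $g_{\tilde v}=0$, or $g_{\tilde v}=1$ and $n_{\tilde v}=1$. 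After these three quasi-isomorphisms one is left with the complex spanned by graphs satisfying (1), (2), (2b), and it remains only to impose $R$.

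Relation (4) is forced by the identity $12\psi_1=\delta_{irr}$ on $\Mb_{1,1}$: by the geometric definition of the $Z$-classes in \cite{CLPW}, the configuration with a loop $(s,t)$ at $\bar v$ decorated by $Z_{B\subseteq A}$ with $A^c=\{s,t\}$ equals a $\psi$-class multiple of the weight-$11$ class $\omega_{B\sqcup p}$ on the genus-one vertex carrying a one-pointed elliptic component, so replacing $\psi$ by $\tfrac{1}{12}\delta_{irr}$ yields precisely the case-(B) graph of (4); relations (3a) and (3b) record classes vanishing for support reasons ($s,t$ are forced into $A^c$ once $|A^c|\ge3$, respectively the pullback image $H^2(\Mb_{1,\cdot})\to H^2(\Mb_{0,\cdot})$ sits inside a tadpole). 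The differences appearing in (4) together with the graphs in (3a), (3b) span an acyclic subcomplex, contracted by the homotopy that undoes the splitting creating the $\delta_{irr}$-vertex; quotienting by it yields $\bGK^{12,1}_{g,n}=G/R$ and completes the proof. This argument is essentially that of \cite[Section 2.2]{CLPW2} and transfers verbatim; the step I expect to be the main obstacle is Stage 2, since it is there that one must invoke the nontrivial structural results on $H^{\le 13}(\Mb_{g,n})$ and, more delicately, choose the genus-reduction homotopies compatibly with those of Stages 1 and 3, so that the local homotopies assemble into a single homotopy contracting all of $K$.
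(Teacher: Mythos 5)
The paper does not actually prove this proposition: its ``proof'' consists of the citation to \cite[Section 2.2]{CLPW2}, a remark that properties 1, 2, 2b and relations 3a, 3b, 4 are a rewriting of the relations $1$--$6,7',7''$ there, and one substantive observation --- that the absence of multiple edges away from $\bar v,\tilde v$ may be imposed for free because such graphs carry the odd automorphism swapping the parallel edges and hence already vanish in the coinvariants. Your proposal instead attempts to reconstruct the argument of the cited reference, and its overall shape (surjection with acyclic kernel; staged quotients each contracted by a local homotopy; boundary-generation of $H^{11}$, $H^{13}$ for $g\ge 2$ and of $H^{1,1}$ forcing conditions (2) and (2b); the identity $12\psi_1=\delta_{\mathrm{irr}}$ on $\MM_{1,1}$ explaining the factor $\tfrac1{12}$ in relation 4) is consistent with how such simplifications are carried out in \cite{CLPW2} and its predecessors. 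What your route buys is an actual outline of why the statement is true; what the paper's route buys is brevity and the guarantee of correctness by deferral.

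Two cautions. First, your Stage 1 treats multiple edges by a contracting homotopy; the correct (and simpler) mechanism, and the one the paper points to, is that these graphs are zero on the nose in the coinvariants by the edge-swap symmetry --- the homotopy-style argument is the right tool for tadpoles and for the valence and genus conditions, not for parallel edges. Second, your proposal repeatedly defers the real content (``I would check'', ``Granting this''): the acyclicity of each stage's kernel and, as you yourself flag, the compatibility of the genus-reduction homotopy with the others are precisely the nontrivial verifications, and for $H^{1,1}$ you should also account for the generators $\kappa_1$ and $\delta_{\mathrm{irr}}$ alongside the $\psi_i$ and $\delta\{\substack{A\\A'}\}$ when reducing $\tilde v$. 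As written, your text is a plausible roadmap rather than a proof, which is acceptable here only because the statement is in any case being imported from \cite{CLPW2}.
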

\begin{proof}
    This is the content of \cite[Section 2.2]{CLPW2}. We have restricted to rewriting their relations $1-6,7',7''$ into the properties $1,2,2b$ and relations $3a,3b,4$. The absence of multiple edges in property $1)$ can be imposed because those graphs would be later killed by coinvariants.
\end{proof}

We expand \eqref{equ:GK13} to give an explicit formula for the subspace $G$ in the above proposition. After exchanging the order of the cohomology relations and the coinvariants we obtain
\begin{multline}\label{equ:G}
   G \hspace{.1in} = \hspace{.1in}
   \bigoplus_{\substack{[\Gamma],\bar{v},\,B\subseteq A\subseteq N_{\bar{v}} \\ |B|=10, |A^c|\geq 3}} \left( \left<Z_{B\subseteq A}\right> \otimes \Q[-1]^{\otimes |E(\Gamma)|} \right)/\text{Aut}_{\Gamma,\bar{v}}^{B,A} \\
    \oplus \bigoplus_{\substack{[\Gamma],\bar{v},\,E\subseteq N_{\bar{v}} \\ |E|=12}} \bigg( \bigoplus_{\substack{B\subseteq A\subseteq N_{\bar{v}} \\  B\sqcup A^c=E}} \left( \left<Z_{B\subseteq A}\right> \otimes \Q[-1]^{\otimes |E(\Gamma)|} \right)/\text{Aut}_{\Gamma,\bar{v}}^{B,A} \bigg) \Big/ \{\substack{\text{weight 13}\\\text{relations}}\} \\
    \oplus \bigoplus_{[\Gamma],\bar{v},\tilde{v}} \bigg( \bigoplus_{\substack{B\subseteq N_{\tilde{v}}, A\sqcup A'=N_{\tilde{v}}\\ |B|=11, |A|,|A'|\geq 2}} \left( \left<\omega_B\otimes\delta\{\substack{A \\ A'}\}\right> \otimes \Q[-1]^{\otimes |E(\Gamma)|}\right) / \text{Aut}_{\Gamma,\bar{v},\tilde{v}}^{B,\{A,A'\} } \bigg) \Big/ \{\substack{\text{weight 11 and 2}\\\text{relations}}\} \\
    \oplus \bigoplus_{[\Gamma],\bar{v},\tilde{v}} \bigg( \bigoplus_{\substack{B\subseteq N_{\tilde{v}}\\ |B|=11}} \left( \left<\omega_B\otimes \delta_{irr}\right> \otimes \Q[-1]^{\otimes |E(\Gamma)|}\right) / \text{Aut}_{\Gamma,\bar{v},\tilde{v}}^B \bigg) \Big/ \{\substack{\text{weight 11}\\\text{relations}}\} ,
\end{multline}
where, for example in the third term, Aut$_{\Gamma,\bar{v},\tilde{v}}^{B,\{A,A'\}}$ is the group of automorphisms leaving the set $B$ and the partition $A\sqcup A'$ invariant; in all four direct sums the isomorphism classes $[\Gamma]$ with special vertices $\bar{v},\tilde{v}$ are understood run only over graphs satisfying properties $1,2,2b$.

The automorphism groups in \eqref{equ:G} are precisely the symmetries of the decorated graphs $(\Gamma,Z_{B\subseteq A}\otimes e)$, $(\Gamma,\omega_B\otimes\delta\{\substack{A \\ A'}\}\otimes e)$ and $(\Gamma,\omega_B\otimes \delta_{irr}\otimes e)$ respectively, which act by the sign of the permutation of the half-edges of $\bar{v}$ in $B$ multiplied by the sign of the permutation on the internal edges $E(\Gamma)$. In each case, the coinvariants restrict to killing the decorated graphs with odd symmetry.

Using the graphical depictions of cohomology classes, the relations in \ref{bGKprojection} look as follows.
\[
    \text{3b):   }
    \begin{tikzpicture}
        \node[int] (v) at (0,0) {};
        \node[int] (w) at (0.5,0) {};
        \node (s) at (-.6,.5)  {\tiny$s$};
        \node (t) at (-.6,-.5)  {\tiny$t$};
        \draw (v) to [out=135,in=225,distance=1.3cm] (v) edge[dashed] +(-.5,.2) edge[dashed] +(-.2,.5) edge[dashed] +(-.5,-.2) edge[dashed] +(-.2,-.5) edge[crossed] (w)
        (w) edge +(.5,-.5) edge +(.5,.5) edge[dashed] +(.5,.2) edge[dashed] +(.5,-.2) edge[dashed] +(.2,.5) edge[dashed] +(.2,-.5);
        \draw [decorate,decoration={brace,amplitude=5pt,mirror}]
        (-.8,.5) -- (-.8,-.5) node[midway,xshift=-1em]{$A$};
        \draw [decorate,decoration={brace,amplitude=5pt}]
        (1.1,.5) -- (1.1,-.5) node[midway,xshift=1em]{$A'$};
    \end{tikzpicture}
    = 0
    \hspace{1cm}
    \sum_{\substack{A\sqcup A'= N_{\tilde{v}}\\ |A|,|A'|\geq 2 \\ s\in A, t\in A'}}
    \begin{tikzpicture}
        \node[int] (v) at (0,0) {};
        \node[int] (w) at (0.5,0) {};
        \node (s) at (-.1,.5)  {\tiny$s$};
        \node (t) at (.6,.5)  {\tiny$t$};
        \draw (v) edge +(-.5,-.5) edge[dashed] +(-.5,.5) edge[dashed] +(-.5,.2) edge[dashed] +(-.5,-.2) edge[dashed] +(-.2,-.5) edge[crossed] (w) edge[bend left=90,distance=.8cm] (w)
        (w) edge +(.5,-.5) edge[dashed] +(.5,.5) edge[dashed] +(.5,.2) edge[dashed] +(.5,-.2) edge[dashed] +(.2,-.5);
        \draw [decorate,decoration={brace,amplitude=5pt,mirror}]
        (-.7,.5) -- (-.7,-.5) node[midway,xshift=-1em]{$A$};
        \draw [decorate,decoration={brace,amplitude=5pt}]
        (1.1,.5) -- (1.1,-.5) node[midway,xshift=1em]{$A'$};
    \end{tikzpicture}
    =0
\]

\[
    \text{3a):  }
    \begin{tikzpicture}
        \node[ext] (v) at (0,0) {};
        \node at (-.62, .1) {$\scriptscriptstyle \vdots$};
        \node[int] (w) at (.8, 0) {};
        \node (s) at (1.5,.4) {\tiny$s$};
        \node (t) at (1.5,-.4) {\tiny$t$};
        \draw (v) edge[->-] +(-.6,-.5) edge[->-] +(-.6,-.3) edge[->-] +(-.3,-.5)
        edge[->-] +(-.6,.5) edge[->-] +(-.6,.3) edge[->-] +(-.3,.5) 
        edge[dashed] +(.6,.5) edge[dashed] +(.6,+.3)  edge[dashed] +(.3,+.5)
        edge[dashed] +(.6,-.5) edge[dashed] +(.6,-.3) edge[dashed] +(.3,-.5) edge[crossed] (w);
        \draw (w)  to [out=35,in=-35,distance=1cm] (w) edge[dashed] +(.6,+.5)  edge +(.3,+.5)
        edge[dashed] +(.6,-.5) edge[dashed] +(.3,-.5);
        \draw [decorate,decoration={brace,amplitude=5pt,mirror}]
        (-.8,.5) -- (-.8,-.5) node[midway,xshift=-1em]{$B$};
        \draw [decorate,decoration={brace,amplitude=5pt}]
        (1.6,.5) -- (1.6,-.5) node[midway,xshift=1em]{$A^c$};
        \draw [decorate,decoration={brace,mirror,amplitude=5pt}]
        (-.7,-.6) -- (.7,-.6) node[midway,yshift=-0.8em]{\small $A$};
    \end{tikzpicture}
    =0
    \hspace{1cm}
    \text{4):  }
    \begin{tikzpicture}
        \node[ext] (v) at (0,0) {};
        \node at (-.62, .1) {$\scriptscriptstyle \vdots$};
        \node[int] (w) at (.8, 0) {};
        \node (s) at (1.2,.4) {\tiny$s$};
        \node (t) at (1.2,-.4) {\tiny$t$};
        \draw (v) edge[->-] +(-.6,-.5) edge[->-] +(-.6,-.3) edge[->-] +(-.3,-.5)
        edge[->-] +(-.6,.5) edge[->-] +(-.6,.3) edge[->-] +(-.3,.5) 
        edge[dashed] +(.6,.5) edge[dashed] +(.6,+.3)  edge[dashed] +(.3,+.5)
        edge[dashed] +(.6,-.5) edge[dashed] +(.6,-.3) edge[dashed] +(.3,-.5) edge[crossed] (w);
        \draw (w)  to [out=35,in=-35,distance=1cm] (w);
        \draw [decorate,decoration={brace,amplitude=5pt,mirror}]
        (-.8,.5) -- (-.8,-.5) node[midway,xshift=-1em]{$B$};
        \draw [decorate,decoration={brace,mirror,amplitude=5pt}]
        (-.7,-.6) -- (.7,-.6) node[midway,xshift=-.3em,yshift=-0.8em]{\small $A$};
        \node at (1.2,-1) {\tiny $A^c=\{s,t\}$};
    \end{tikzpicture}
    =
    \frac{1}{12}
    \begin{tikzpicture}
        \node[ext] (v) at (0,0) {};
        \node at (-.62, .1) {$\scriptscriptstyle \vdots$};
        \node[int] (w) at (.8, 0) {};
        \node at (.7,-.25) {\tiny$p$};
        \node at (.4,-1) {\small$\omega_{B\sqcup p}\otimes\delta_{irr}$};
        \draw (v) edge[->-] +(-.6,-.5) edge[->-] +(-.6,-.3) edge[->-] +(-.3,-.5)
        edge[->-] +(-.6,.5) edge[->-] +(-.6,.3) edge[->-] +(-.3,.5) 
        edge[dashed] +(.6,.5) edge[dashed] +(.6,+.3)  edge[dashed] +(.3,+.5)
        edge[dashed] +(.6,-.5) edge[dashed] +(.6,-.3) edge[dashed] +(.3,-.5) edge[->-] (w);
        \draw (w) edge[crossed, out=35,in=-35,distance=1cm] (w);
        \draw [decorate,decoration={brace,amplitude=5pt,mirror}]
        (-.8,.5) -- (-.8,-.5) node[midway,xshift=-1em]{$B$};
    \end{tikzpicture}
\]

With condition $1)$ and relations $3a),3b), 4)$ we can consider only the generators whose graphical depiction has no loops except possibly at $\bar{v}$, but including the special case of exactly one loop at $\tilde{v}$ for the case $(B)$ with $\omega_B\otimes \delta_{irr}$. In addition, we can ignore case $(A)$ generators with $|A^c|=2$ and a multiple edge parallel to the crossed edge with an endpoint in the set $B$ using coinvariants and the weight $13$ relation:
\[
    2 \cdot
    \begin{tikzpicture}
        \node[ext] (v) at (0,0) {};
        \node at (-.62, .1) {$\scriptscriptstyle \vdots$};
        \node[int] (w) at (.8, 0) {};
        \node (q) at (1.3,.2) {\tiny$q$};
        \node (s) at (.65,.5) {\tiny$s$};
        \node (t) at (.15,.5) {\tiny$t$};
        \draw (v) edge[->-] +(-.6,-.5) edge[->-] +(-.6,-.3) edge[->-] +(-.3,-.5)
        edge[->-] +(-.6,.5) edge[->-] +(-.6,.3) edge[->-] +(-.3,.5) 
        edge[dashed] +(.6,-.5) edge[dashed] +(.6,-.3) edge[dashed] +(.3,-.5) edge[crossed] (w) edge[->-, bend left=60,distance=.5cm] (w);
        \draw (w) edge +(.5,+.5);
        \draw [decorate,decoration={brace,amplitude=5pt,mirror}]
        (-.8,.5) -- (-.8,-.5) node[midway,xshift=-1em]{$B$};
        \draw [decorate,decoration={brace,mirror,amplitude=5pt}]
        (-.7,-.6) -- (.7,-.6) node[midway,yshift=-0.8em]{\small $A$};
    \end{tikzpicture}
    =
    \begin{tikzpicture}
        \node[ext] (v) at (0,0) {};
        \node at (-.62, .1) {$\scriptscriptstyle \vdots$};
        \node[int] (w) at (.8, 0) {};
        \node (q) at (1.3,.2) {\tiny$q$};
        \node (s) at (.65,.5) {\tiny$s$};
        \node (t) at (.15,.5) {\tiny$t$};
        \draw (v) edge[->-] +(-.6,-.5) edge[->-] +(-.6,-.3) edge[->-] +(-.3,-.5)
        edge[->-] +(-.6,.5) edge[->-] +(-.6,.3) edge[->-] +(-.3,.5) 
        edge[dashed] +(.6,-.5) edge[dashed] +(.6,-.3) edge[dashed] +(.3,-.5) edge[crossed] (w) edge[->-, bend left=60,distance=.5cm] (w);
        \draw (w) edge +(.5,+.5);
        \draw [decorate,decoration={brace,amplitude=5pt,mirror}]
        (-.8,.5) -- (-.8,-.5) node[midway,xshift=-1em]{$B$};
        \draw [decorate,decoration={brace,mirror,amplitude=5pt}]
        (-.7,-.6) -- (.7,-.6) node[midway,yshift=-0.8em]{\small $A$};
    \end{tikzpicture}
    +
    \begin{tikzpicture}
        \node[ext] (v) at (0,0) {};
        \node at (-.62, .1) {$\scriptscriptstyle \vdots$};
        \node[int] (w) at (.8, 0) {};
        \node (q) at (1.3,.2) {\tiny$q$};
        \node (s) at (.65,.5) {\tiny$t$};
        \node (t) at (.15,.5) {\tiny$s$};
        \draw (v) edge[->-] +(-.6,-.5) edge[->-] +(-.6,-.3) edge[->-] +(-.3,-.5)
        edge[->-] +(-.6,.5) edge[->-] +(-.6,.3) edge[->-] +(-.3,.5) 
        edge[dashed] +(.6,-.5) edge[dashed] +(.6,-.3) edge[dashed] +(.3,-.5) edge[crossed] (w) edge[->-, bend left=60,distance=.5cm] (w);
        \draw (w) edge +(.5,+.5);
        \draw [decorate,decoration={brace,amplitude=5pt,mirror}]
        (-.8,.5) -- (-.8,-.5) node[midway,xshift=-1em]{$B$};
        \draw [decorate,decoration={brace,mirror,amplitude=5pt}]
        (-.7,-.6) -- (.7,-.6) node[midway,yshift=-0.8em]{\small $A$};
    \end{tikzpicture}
    =
    \begin{tikzpicture}
        \node[ext] (v) at (0,0) {};
        \node at (-.62, .1) {$\scriptscriptstyle \vdots$};
        \node[int] (w) at (.8, 0) {};
        \node (q) at (.2,.5) {\tiny$q$};
        \node (s) at (1.2,.4) {\tiny$s$};
        \node (t) at (1.2,-.4) {\tiny$t$};
        \draw (v) edge[->-] +(-.6,-.5) edge[->-] +(-.6,-.3) edge[->-] +(-.3,-.5)
        edge[->-] +(-.6,.5) edge[->-] +(-.6,.3) edge[->-] +(-.3,.5) 
        edge[->-] +(.6,.5)
        edge[dashed] +(.6,-.5) edge[dashed] +(.6,-.3) edge[dashed] +(.3,-.5) edge[crossed] (w);
        \draw (w)  to [out=35,in=-35,distance=1cm] (w);
        \draw [decorate,decoration={brace,amplitude=5pt,mirror}]
        (-.8,.5) -- (-.8,-.5) node[midway,xshift=-1em]{$B$};
        \draw [decorate,decoration={brace,mirror,amplitude=5pt}]
        (-.7,-.6) -- (.7,-.6) node[midway,xshift=-.3em,yshift=-0.8em]{\small $A$};
    \end{tikzpicture}.
\]

\subsection{Blown-up representation of the generators} \label{sec:BlownUpRep}

We wish to represent the decorated graphs generating $\myB_{g,n}$ in terms of purely combinatorial data, so by graphs with features on their edges and vertices.

Using the graphical depictions of cohomology classes, we look at their connected components after deleting the special vertex but keeping it's half-edges and hairs; this is called the blown-up representation. We keep track of the features at the special vertex by labeling the hairs of the blown-up components $j$, $\omega$ or $\epsilon$, depending upon wether they were a hair of the original decorated graph, an edge in the set $B$ of the decoration, or an edge outside $B$ respectively. In each case there are precisely eleven $\omega$ labels. In the pictures below, one has to imagine  the rest of the ambient graph to exist unchanged, potentially grouping the $\epsilon$ and $\omega$ hairs into connected components.

\[
    \begin{tikzpicture}
        \node[ext] (v) at (0,0) {};
        \node at (-.62, .1) {$\scriptscriptstyle \vdots$};
        \draw (v) edge[->-] +(-.6,-.5) edge[->-] +(-.6,-.3) edge[->-] +(-.3,-.5)
        edge[->-] +(-.6,.5) edge[->-] +(-.6,.3) edge[->-] +(-.3,.5) 
        edge[dashed] +(.6,.5) edge[dashed] +(.6,+.3)  edge[dashed] +(.3,+.5)
        edge[dashed] +(.6,-.5) edge[dashed] +(.6,-.3) edge[dashed] +(.3,-.5) ;
        \draw [decorate,decoration={brace,amplitude=5pt,mirror}]
      (-.8,.5) -- (-.8,-.5) node[midway,xshift=-1em]{$B$};
        \draw [decorate,decoration={brace,amplitude=5pt}]
        (.7,.5) -- (.7,-.5) node[midway,xshift=1em]{$B^c$};
    \end{tikzpicture}
    \mapsto
    \begin{tikzpicture}
        \node (e1) at (-1.5,-.5) {$\epsilon$};
        \node at (-1,-.5) {$\dots$};
        \node (e2) at (-.5,-.5) {$\epsilon$};
        \node (w1) at (0,-.5) {$\omega$};
        \node at (0.5,-.5) {$\dots$};
        \node (w2) at (1,-.5) {$\omega$};
        \draw (e1) edge[dashed] +(0,1) (e2) edge[dashed] +(0,1);
        \draw (w1) edge +(0,1) (w2) edge +(0,1);
        \draw [decorate,decoration={brace,mirror,amplitude=5pt}]
        (-1.5,-.7) -- (-.3,-.7) node[midway,yshift=-0.8em]{\tiny $B^c$};
        \draw [decorate,decoration={brace,mirror,amplitude=5pt}]
        (-.2,-.7) -- (1.2,-.7) node[midway,yshift=-0.8em]{\tiny $B$};
    \end{tikzpicture}
    \hspace{0.6cm}
    \begin{tikzpicture}
        \node[ext] (v) at (0,0) {};
        \node at (-.62, .1) {$\scriptscriptstyle \vdots$};
        \node[int] (w) at (.8, 0) {};
        \draw (v) edge[->-] +(-.6,-.5) edge[->-] +(-.6,-.3) edge[->-] +(-.3,-.5)
        edge[->-] +(-.6,.5) edge[->-] +(-.6,.3) edge[->-] +(-.3,.5) 
        edge[dashed] +(.6,.5) edge[dashed] +(.6,+.3)  edge[dashed] +(.3,+.5)
        edge[dashed] +(.6,-.5) edge[dashed] +(.6,-.3) edge[dashed] +(.3,-.5) edge[crossed] (w);
        \draw (w) edge +(.6,.5) edge[dashed] +(.6,+.3)  edge[dashed] +(.3,+.5)
        edge +(.6,-.5) edge[dashed] +(.6,-.3) edge[dashed] +(.3,-.5);
        \draw [decorate,decoration={brace,amplitude=5pt,mirror}]
        (-.8,.5) -- (-.8,-.5) node[midway,xshift=-1em]{$B$};
        \draw [decorate,decoration={brace,amplitude=5pt}]
        (1.5,.5) -- (1.5,-.5) node[midway,xshift=1em]{$A^c$};
        \draw [decorate,decoration={brace,mirror,amplitude=5pt}]
        (-.7,-.6) -- (.7,-.6) node[midway,yshift=-0.8em]{\small $A$};
    \end{tikzpicture}
    \mapsto
    \begin{tikzpicture}
        \node (e1) at (-1.5,-.5) {$\epsilon$};
        \node at (-1,-.5) {$\dots$};
        \node (e2) at (-.5,-.5) {$\epsilon$};
        \node (w1) at (0,-.5) {$\omega$};
        \node at (0.5,-.5) {$\dots$};
        \node (w2) at (1,-.5) {$\omega$};
        \node (w_) at (1.5,-.5) {$\omega$};
        \node[int] (v) at (1.5,.5) {};
        \draw (e1) edge[dashed] +(0,1) (e2) edge[dashed] +(0,1);
        \draw (w1) edge +(0,1) (w2) edge +(0,1);
        \draw (w_) edge[crossed] (v);
        \draw (v) edge +(.5,.5)  edge[dashed] +(.5,.2) edge[dashed] +(.2,.5);
        \draw (v) edge +(-.5,.5) edge[dashed] +(-.5,.2) edge[dashed] +(-.2,.5);
        \draw [decorate,decoration={brace,mirror,amplitude=5pt}]
        (-1.5,-.7) -- (-.3,-.7) node[midway,yshift=-0.8em]{\tiny $A\setminus B$};
        \draw [decorate,decoration={brace,mirror,amplitude=5pt}]
        (-.2,-.7) -- (1.2,-.7) node[midway,yshift=-0.8em]{\tiny $B$};
        \draw [decorate,decoration={brace,mirror,amplitude=5pt}]
        (2,.5) -- (2,1.1) node[midway,xshift=1em]{\small $A^c$};
    \end{tikzpicture}
    \hspace{1cm}
\]

As in \cite[Section 3.2]{CLPW2}, we classify blown-up representations by their excess value $$E(g,n) = 3g+2n,$$ which fits together as follows with the parameters of blown-up components:
\[ \sum_i\omega_i = 11  \hspace{1cm}  n_{\bar{v}} = \sum_i\epsilon_i+\omega_i \hspace{1cm} n = \sum_i n_i \hspace{1cm} g = 1+\sum_i(g_i+\epsilon_i+\omega_i-1) \]
\[ 2(g-1)+n = n_{\bar{v}}+\sum_i 2(g_i-1)+\epsilon_i+\omega_i+n_i \hspace{1.5cm}  \forall i: 3(g_i-1)+3\epsilon_i+\omega_i+2n_i \geq 0\]
\[ 3(g-1)+2n = n_{\bar{v}}+\sum_i 3(g_i-1)+2(\epsilon_i+\omega_i+n_i) = 22+\sum_i 3(g_i-1)+3\epsilon_i+\omega_i+2n_i,
\]
where $g_i,\epsilon_i,\omega_i,n_i$ are the genus, number of $\epsilon$ labels, number of $\omega$ labels and number of original hairs of the $i$-th blown-up component. $E(g,n)-25$ is additive over paramenter $e(C_i)=3(g_i-1)+3\epsilon_i+\omega_i+2n_i$ of blown-up components, which is non-negative for all non-zero graphs that we have to consider in weight $13$.

One reason for considering this function as a measure of complexity is that the only blown-up components with $e(C_i)=0$ are the double-hair $\omega - j$  and the 'tripod' with $\omega$ labels (see \eqref{equ:VirtualExample}), which are arguably the simplest stable subgraphs.

We call the \emph{virtual} blown-up representation of a decorated graph its list of blown-up components with $e(C_i)>0$; this list determines uniquely the excess value $E(g,n)$. Each virtual blown-up graph can be completed to an actual blown-up representation in different ways by appending $\omega-j$ hairs and tripods. The possible ways in which this can be done determines the range of existence of the virtual blown-up graph among the different $(g,n)$ pairs withing the excess class.

For example, the following virtual blown-up representation of excess $28$ can be completed in three ways.
\begin{multline} \label{equ:VirtualExample}
    \begin{tikzpicture}
        \node (w2) at (.4,-.5) {\small$\omega$};
        \node (w3) at (.8,-.5) {\small$\omega$};
        \node (w4) at (1.2,-.5) {\small$\omega$};
        \node[int] (v1) at (.4,.3) {};
        \node[int] (v2) at (1,.3) {};
        \node (j1) at (.4,1.2) {\small $j$};
        \draw(v1) edge(v2)edge[crossed](w2) edge(j1) (v2) edge(w3)edge(w4);
        \node (w5) at (1.6,-.5) {\small$\omega$};
        \node (w6) at (2,-.5) {\small$\omega$};
        \node[int] (v5) at (1.8,.3) {};
        \node (j) at (1.8,1.2) {\small $j$};
        \draw (v5) edge(j)edge(w5)edge(w6);
    \end{tikzpicture}
    \hspace{.2in} \xrightarrow{\substack{\text{possible completions}\\\text{$(g,n): (4,8),(6,5),(8,2)$}}} \hspace{.2in}
    \begin{tikzpicture}
        \node (ww1) at (-2,-.5) {\small $\omega$};
        \node (jj1) at (-2,.5) {\small $j$};
        \node (ww2) at (-1.6,-.5) {\small $\omega$};
        \node (jj2) at (-1.6,.5) {\small $j$};
        \node (ww3) at (-1.2,-.5) {\small $\omega$};
        \node (jj3) at (-1.2,.5) {\small $j$};
        \node (ww4) at (-.8,-.5) {\small $\omega$};
        \node (jj4) at (-.8,.5) {\small $j$};
        \node (ww5) at (-.4,-.5) {\small $\omega$};
        \node (jj5) at (-.4,.5) {\small $j$};
        \node (ww6) at (0,-.5) {\small $\omega$};
        \node (jj6) at (0,.5) {\small $j$};
        \draw(jj1)edge(ww1)(jj2)edge(ww2)(jj3)edge(ww3)(jj4)edge(ww4)(jj5)edge(ww5)(jj6)edge(ww6);
        \node (w2) at (.4,-.5) {\small$\omega$};
        \node (w3) at (.8,-.5) {\small$\omega$};
        \node (w4) at (1.2,-.5) {\small$\omega$};
        \node[int] (v1) at (.4,.3) {};
        \node[int] (v2) at (1,.3) {};
        \node (j1) at (.4,1.2) {\small $j$};
        \draw(v1) edge(v2)edge[crossed](w2) edge(j1) (v2) edge(w3)edge(w4);
        \node (w5) at (1.6,-.5) {\small$\omega$};
        \node (w6) at (2,-.5) {\small$\omega$};
        \node[int] (v5) at (1.8,.3) {};
        \node (j) at (1.8,1.2) {\small $j$};
        \draw (v5) edge(j)edge(w5)edge(w6);
    \end{tikzpicture}, \\
    \begin{tikzpicture}
        \node (ww1) at (-2,-.5) {\small $\omega$};
        \node (jj1) at (-2,.5) {\small $j$};
        \node (ww2) at (-1.6,-.5) {\small $\omega$};
        \node (jj2) at (-1.6,.5) {\small $j$};
        \node (ww3) at (-1.2,-.5) {\small $\omega$};
        \node (jj3) at (-1.2,.5) {\small $j$};
        \node (ww4) at (-.8,-.5) {\small $\omega$};
        \node (ww5) at (-.4,-.5) {\small $\omega$};
        \node (ww6) at (0,-.5) {\small $\omega$};
        \node[int] (vv2) at (-.4,.3) {};
        \draw(jj1)edge(ww1)(jj2)edge(ww2)(jj3)edge(ww3)  (vv2) edge(ww4)edge(ww5)edge(ww6);
        \node (w2) at (.4,-.5) {\small$\omega$};
        \node (w3) at (.8,-.5) {\small$\omega$};
        \node (w4) at (1.2,-.5) {\small$\omega$};
        \node[int] (v1) at (.4,.3) {};
        \node[int] (v2) at (1,.3) {};
        \node (j1) at (.4,1.2) {\small $j$};
        \draw(v1) edge(v2)edge[crossed](w2) edge(j1) (v2) edge(w3)edge(w4);
        \node (w5) at (1.6,-.5) {\small$\omega$};
        \node (w6) at (2,-.5) {\small$\omega$};
        \node[int] (v5) at (1.8,.3) {};
        \node (j) at (1.8,1.2) {\small $j$};
        \draw (v5) edge(j)edge(w5)edge(w6);
    \end{tikzpicture}, \hspace{.4in}
    \begin{tikzpicture}
        \node (ww1) at (-2,-.5) {\small $\omega$};
        \node (ww2) at (-1.6,-.5) {\small $\omega$};
        \node (ww3) at (-1.2,-.5) {\small $\omega$};
        \node[int] (vv1) at (-1.6,.3) {};
        \node (ww4) at (-.8,-.5) {\small $\omega$};
        \node (ww5) at (-.4,-.5) {\small $\omega$};
        \node (ww6) at (0,-.5) {\small $\omega$};
        \node[int] (vv2) at (-.4,.3) {};
        \draw(vv1) edge(ww1)edge(ww2)edge(ww3) (vv2) edge(ww4)edge(ww5)edge(ww6);
        \node (w2) at (.4,-.5) {\small$\omega$};
        \node (w3) at (.8,-.5) {\small$\omega$};
        \node (w4) at (1.2,-.5) {\small$\omega$};
        \node[int] (v1) at (.4,.3) {};
        \node[int] (v2) at (1,.3) {};
        \node (j1) at (.4,1.2) {\small $j$};
        \draw(v1) edge(v2)edge[crossed](w2) edge(j1) (v2) edge(w3)edge(w4);
        \node (w5) at (1.6,-.5) {\small$\omega$};
        \node (w6) at (2,-.5) {\small$\omega$};
        \node[int] (v5) at (1.8,.3) {};
        \node (j) at (1.8,1.2) {\small $j$};
        \draw (v5) edge(j)edge(w5)edge(w6);
    \end{tikzpicture}
\end{multline}


\section{Computations} \label{sec:Computations}

\subsection{Previous results in excess smaller or equal to 27} 
These cases have been analyzed in \cite[Section 3]{CLPW2}. In excess smaller or equal to $25$ we have $H^*(\myB_{g,n}) = 0$. In excess 26 and 27 the cohomology is concentrated in the top degree of the complex, except in $(g,n)=(9,0)$ where it is in one degree lower.
\begin{align*}
    H^k(\myB_{2,10}) &=
    \begin{cases}
        V_{1^{10}} & \text{for $k=14$} \\
        0 & \text{otherwise}
    \end{cases}
&
H^k(\myB_{4,7}) &=
    \begin{cases}
        V_{1^{7}} & \text{for $k=17$} \\
        0 & \text{otherwise}
    \end{cases}
\\
H^k(\myB_{6,4}) &=
\begin{cases}
    V_{1^{4}} & \text{for $k=20$} \\
    0 & \text{otherwise}
\end{cases}     
&
H^k(\myB_{8,1}) &=
\begin{cases}
    V_1 & \text{for $k=23$} \\
    0 & \text{otherwise.}
\end{cases}  
\end{align*}

\begin{align*}
  H^k(\myB_{1,12}) &=
  \begin{cases}
      V_{21^{10}} & \text{for $k=13$} \\
      0 & \text{otherwise}
  \end{cases}
  &
  H^k(\myB_{3,9}) &=
  \scalebox{.95}{$\begin{cases}
      V_{1^{9}}\oplus V_{21^7}^{\oplus 2} & \text{for $k=16$} \\
      0 & \text{otherwise}
  \end{cases}$
  }
  \\
  H^k(\myB_{5,6}) &=
  \begin{cases}
    V_{1^{6}}\oplus V_{21^4}^{\oplus 2} & \text{for $k=19$} \!\! \\
    0 & \text{otherwise}
\end{cases}
  &
  H^k(\myB_{7,3}) &=
  \begin{cases}
    V_{1^{3}}\oplus V_{21}^{\oplus 2} & \text{for $k=22$} \\
    0 & \text{otherwise}
\end{cases}
  \\
  H^k(\myB_{9,0}) &=
  \begin{cases}
    \mathbb C & \text{for $k=24$} \\
    0 & \text{otherwise.}
  \end{cases}
\end{align*}



\subsection{Excess 28} We are looking at the $(g,n)$ pairs $(2,11),(4,8),(6,5)$ and $(8,2)$.
At the bottom of this paper we have appended a list of 106 virtual blown-up representations of excess $28$ graphs. Above each there is a unique identifier. The second ID above each graph, next to the mapping arrow, is a choice of a non-zero term in the image of the differential; this will be explained below. $\omega,\epsilon$ and $j$ hairs are colored blue, yellow and green respectively. The crossed edge is colored red.

The values of $n$ above each graph are the ones for which it can be completed to a $(g,n)$ decorated graph with $E(g,n)=28$ by adding $\omega-j$ hairs and tripods; we call this the \emph{existence range} of the virtual blown-up representation.

Each virtual blown-up representation in the list also represents all possible ways of labeling its $j$ hairs by numbers $1,...,n$. The $\ss_n$ action on $\myB_{g,n}$ then restricts to the subspace generated by isomorphism classes of these labelings. Thus, each virtual blown-up representation $\VV$ contributes an $\ss_n-$representation $\left<\VV\right>\subseteq\myB_{g,n}$, which we decompose in terms of specht modules.
Note that the actual Specht module contributions displayed above each graph in the list might not be accurate because of weight $13$ or $11$ relations.


\begin{proposition} \label{thm:GK121} For every $(g,n)$ with $E(g,n)=28$, it holds
    \begin{multline*}
        \myB_{g,n} \hspace{.1in} \cong \hspace{.1in}
        \bigoplus_{\VV \in A_3} \left<\VV\right> \hspace{.05in}
        \oplus \bigg( \bigoplus_{\VV \in A_2} \left<\VV \right> \bigg) \Big/ \{\substack{\text{weight 13}\\\text{relations}}\} \\
        \oplus \bigg( \bigoplus_{\VV \in B_1}  \left<\VV \right> \bigg) \Big/ \{\substack{\text{weight 11}\\\text{relations}}\}
        \hspace{.05in} \oplus \bigg( \bigoplus_{\VV \in B_{irr}} \left<\VV \right> \bigg) \Big/ \{\substack{\text{weight 11}\\\text{relations}}\} ,
    \end{multline*}
    where $A_3,A_2,B_1$ and $B_{irr}$ are the four families of virtual blown-up representations that we have listed at the bottom of this paper. It is understood that a graph from the list only appears in the direct sums if it exists for the specific pair $(g,n)$.
\end{proposition}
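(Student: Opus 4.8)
The plan is to unwind the explicit description \eqref{equ:G} of $G$, together with the relations $R$ recorded in Proposition~\ref{bGKprojection}, and to re-express the surviving generators in the blown-up language of Section~\ref{sec:BlownUpRep}. The four direct summands of \eqref{equ:G} are, in order: the case $(A)$ generators $Z_{B\subseteq A}$ with $|A^c|\geq 3$; the case $(A)$ generators with $|A^c|=2$, i.e.\ with $B\sqcup A^c=E$ for a $12$-element set $E$, taken modulo the weight $13$ relations \eqref{equ:Relations13}; the case $(B)$ generators $\omega_B\otimes\delta\{\substack{A\\A'}\}$ with $g_{\tilde v}=0$, modulo the weight $11$ relations \eqref{equ:Relations11} and the weight $2$ relations \eqref{equ:Relations2}; and the case $(B)$ generators $\omega_B\otimes\delta_{irr}$ (so $g_{\tilde v}=1,\ n_{\tilde v}=1$), modulo the weight $11$ relations. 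These are exactly the four families $A_3$, $A_2$, $B_1$ and $B_{irr}$. Relations 3a), 3b), 4) together with the sign--coinvariants then allow us to restrict to decorated graphs whose depiction has no loops other than the distinguished $\delta_{irr}$--loop at $\tilde v$ in the $B_{irr}$ family — this is the reduction carried out just after Proposition~\ref{bGKprojection} — and relation 4) is precisely what removes the $|A^c|=2$ loop generators, so that $A_2$ loses nothing further to $R$.

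Next I would make the blown-up correspondence precise. Deleting the special vertex $\bar v$ from such a decorated graph while retaining its half-edges with their $j/\omega/\epsilon$ labels (eleven $\omega$'s in every case, the extra one for a $Z_{B\subseteq A}$ decoration being the crossed half-edge) produces a well-defined blown-up representation, and two such decorated graphs are isomorphic iff their blown-up representations agree. Passing to sign--coinvariants kills exactly those blown-up representations admitting an automorphism that acts by $-1$ — the product of the sign of the induced permutation of the $B$--half-edges of $\bar v$ with the sign on the internal edges. Hence $\langle\VV\rangle\subseteq\myB_{g,n}$ is the $\ss_n$--subrepresentation spanned by the labelings of the $j$--hairs of $\VV$ by $\{1,\dots,n\}$ modulo isomorphism, with this sign convention, and $\myB_{g,n}$ decomposes as the direct sum of these $\langle\VV\rangle$ over all blown-up representations $\VV$, subject only to the residual weight $13$ relations on $A_2$ and weight $11$ relations on $B_1$ and $B_{irr}$. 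The weight $2$ relations \eqref{equ:Relations2} do not reappear: they have been used to fix a basis of each $H^{1,1}(\MM_{0,m})$, which is exactly the data of \emph{which} $B_1$ graphs are placed in the list, while relation 3b) disposes of the remaining weight-$2$ loop classes.

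The final and main step is the enumeration. Since $e(C_i)=3(g_i-1)+3\epsilon_i+\omega_i+2n_i$ is additive over blown-up components and $E(g,n)-25=\sum_i e(C_i)$, the hypothesis $E(g,n)=28$ forces $\sum_i e(C_i)=3$; the \emph{virtual} blown-up representation (the components with $e(C_i)>0$) is therefore highly constrained, and one classifies all of them, organized by the partition of $3$ among the $e(C_i)$'s and by which of the four special decorations occurs. This produces the $106$ virtual blown-up representations of the appended list, partitioned into $A_3,A_2,B_1,B_{irr}$. For each $\VV$ one then reads off the existence range: an honest blown-up representation is obtained from $\VV$ by adjoining $\omega$--$j$ double hairs (each raising $n$ by $1$ and consuming one $\omega$) and tripods (each raising $g$ by $2$ and consuming three $\omega$'s), subject to the constraint $\sum_i\omega_i=11$; running through the admissible numbers of double hairs picks out which of $(2,11),(4,8),(6,5),(8,2)$ are reached, as in \eqref{equ:VirtualExample}. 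Assembling these contributions and reinstating the relations \eqref{equ:Relations13} and \eqref{equ:Relations11} on $A_2$ and on $B_1\cup B_{irr}$ respectively yields the stated isomorphism.

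The hard part is the classification itself: enumerating \emph{all} virtual blown-up representations with $\sum_i e(C_i)=3$ with neither omissions nor duplications, computing each existence range correctly, and — most delicately — confirming that the weight $2$ relations are genuinely exhausted by the choice of listed $B_1$ graphs together with relation 3b), so that no additional relation is hiding in that summand. This is a finite but error-prone bookkeeping task; a useful internal check is to run the same reduction in excess $\leq 27$ and recover the descriptions recalled at the start of Section~\ref{sec:Computations} (equivalently those of \cite{CLPW2}).
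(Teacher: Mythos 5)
Your proposal follows essentially the same route as the paper: unwind \eqref{equ:G} and the relations 3a), 3b), 4) into the four combinatorial families (this is the content of Proposition~\ref{thm:GKcombinatorial}), pass to blown-up representations, and enumerate the virtual ones using the additivity of $e(C_i)$ with $\sum_i e(C_i)=E(g,n)-25=3$. The paper itself defers the actual enumeration and the resolution of the residual weight $2$ relations to the correctness of the computer algorithm of Section~\ref{sec:ComputerProgram}, so your acknowledgment that the completeness of the $106$-graph list is the genuinely hard, finite bookkeeping step matches the paper's own treatment.
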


The proof of \ref{thm:GK121} relies on the correctness of the algorithm that we have used to generate all virtual blown-up representations, which we will explain in Section \ref{sec:ComputerProgram}. \\

\begin{table}[!h]
    \caption{Amount of virtual blown-up representations in excess 28}
    \label{fig:FamiliesDistribution}
    \begin{tabular}{c|c|c|c|c}
        edges & $A_3$ & $A_2$ & $B_1$ & $B_{irr}$  \\ \hline
        $10-n$ & 6 & & & \\ \hline
        $11-n$ & 17 & & 7 & 4 \\ \hline
        $12-n$ & 12 & 8 & 19 & 9 \\ \hline
        $13-n$ &  & 8 & 12 & 4
    \end{tabular}
\end{table}

In excess $28$ it is possible to resolve the weight $13$ and $11$ relations by ignoring virtual blown-up representations indipendently of the $(g,n)$ pair.

\begin{lemma} For every $(g,n)$ with $E(g,n)=28$, it holds
    \[  \myB_{g,n} \hspace{.1in} \cong \hspace{.1in} \bigoplus_{\VV \in A_3} \left<\VV\right> \hspace{.05in} \oplus \bigoplus_{\VV \in \tilde{A}_2} \left<\VV \right> \oplus \bigoplus_{\VV \in \tilde{B}_1}  \left<\VV \right> \oplus \bigoplus_{\VV \in \tilde{B}_{irr}} \left<\VV \right> , \]
    where $\tilde{A}_2 = A_2 \setminus \{ 71,69,75,72,108,105,111,107 \}$, $\tilde{B}_1 = B_1\setminus \{ 79 \}$ and $\tilde{B}_{irr} = B_{irr} \setminus \{96\}$.
\end{lemma}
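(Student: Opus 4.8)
The starting point is Proposition \ref{thm:GK121}: since the $A_3$ summand there occurs without a quotient — the classes $Z_{B\subseteq A}$ with $|A^c|\ge 3$ are linearly independent, as recorded by the decomposition $H^{12,1}(\MM_{1,n})=H_{A_3}\oplus\bigoplus_{E}H_{A_2}^E$ in Section \ref{subsec:CohomologyGroups} — it suffices to resolve the weight $13$ relations inside $\bigoplus_{\VV\in A_2}\langle\VV\rangle$ and the weight $11$ relations inside $\bigoplus_{\VV\in B_1}\langle\VV\rangle$ and inside $\bigoplus_{\VV\in B_{irr}}\langle\VV\rangle$ separately. For each of these three families the plan is identical. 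Let $R$ be the subspace spanned by the relevant relations — the instances of \eqref{equ:Relations13}, resp.\ \eqref{equ:Relations11}, pushed forward into the coinvariant presentation \eqref{equ:G}. I would fix a total order on the virtual blown-up representations of the family that places the listed ``dependent'' ones ($71,69,75,72,108,105,111,107$ for $A_2$; $79$ for $B_1$; $96$ for $B_{irr}$) first, and show that $R$ is in echelon form for this order: a generating set of $R$ exists whose members have pairwise distinct leading blocks, each such block being one $\langle\VV\rangle$ with $\VV$ dependent, with an invertible leading coefficient. Given this, $\dim R=\sum_{\VV \text{ dependent}}\dim\langle\VV\rangle$ and the projection $\bigoplus_{\VV\text{ not dependent}}\langle\VV\rangle\to\big(\bigoplus_{\VV}\langle\VV\rangle\big)/R$ is an isomorphism, which is the assertion.

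For $A_2$ (weight $13$), recall that \eqref{equ:Relations13} relates, for a fixed $12$-element subset $E\subseteq N_{\bar v}$, the three classes $Z_{B\subseteq A}$ with $|A^c|=2$ and $B\sqcup A^c=E$ obtained by choosing which two elements of $E$ land in the genus-$0$ bubble $A^c$. In the blown-up picture $E$ is the union of the eleven $\omega$-labels with one further label of $\bar v$, so each instance of the relation connects at most three of the $16$ virtual blown-up representations of $A_2$, with coincidences and vanishings produced by graph automorphisms and by odd symmetry. I would enumerate all instances of \eqref{equ:Relations13} realised within $A_2$ directly from the appended list, simplify them modulo coinvariants, and verify that a generating subset has exactly the eight listed graphs as distinct leading blocks, spread across the two edge-number strata of $A_2$ (Table \ref{fig:FamiliesDistribution}). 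The bookkeeping here is the interplay of the three signs $(-1)^{i+j},(-1)^{i+k},(-1)^{j+k}$ in \eqref{equ:Relations13}, the sign of the $\ss_n$-permutation of the half-edges indexing $B$, and the edge-ordering sign.

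For $B_1$ and $B_{irr}$ (weight $11$), the relations are the instances of $\sum_{i=1}^{12}(-1)^i\omega_{E\setminus e_i}=0$ from \eqref{equ:Relations11}: for a fixed $12$-element $E\subseteq N_{\bar v}$ they relate the at most twelve virtual blown-up representations obtained by cycling which eleven of the twelve hairs of $E$ carry the $\omega$-label. One reads off from the list that, up to coinvariants and the isomorphisms of excess-$28$ graphs, these relations serve only to eliminate a single virtual blown-up representation in each family: every instance of \eqref{equ:Relations11} realised in $B_1$ (resp.\ $B_{irr}$), after discarding the terms killed by odd symmetry, expresses $79$ (resp.\ $96$) in terms of the remaining graphs, while no relation survives among those remaining graphs. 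This gives $\tilde B_1=B_1\setminus\{79\}$ and $\tilde B_{irr}=B_{irr}\setminus\{96\}$, and the desired isomorphism follows by combining the three cases with the relation-free $A_3$ summand.

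The main obstacle is the finitary but error-prone verification that the listed graphs are \emph{exactly} the set of leading blocks, uniformly for every $(g,n)$ with $E(g,n)=28$ — neither too few (which would leave a listed graph secretly independent) nor too many (which would force deleting further representations) — and that the leading blocks are genuinely disjoint, so the echelon argument applies verbatim. This is precisely what the computer program of Section \ref{sec:ComputerProgram} is designed to check: it assembles the matrix of the relations \eqref{equ:Relations13} and \eqref{equ:Relations11} in the presentation \eqref{equ:G}, computes its rank and a row-echelon form, and confirms that the columns indexed by the non-deleted virtual blown-up representations span a complement to the relation space, for each of the four pairs $(2,11),(4,8),(6,5),(8,2)$.
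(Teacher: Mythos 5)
Your proposal is correct and follows essentially the same route as the paper: the paper's proof likewise observes that the relations partition into small "relation groups" (computed by the same algorithm that generates the list), checks that the listed graphs are exactly the redundant ones within their groups while the remaining graphs stay independent, and verifies that this elimination is uniform over the four $(g,n)$ pairs. Your echelon-form formulation is just a more formal phrasing of the paper's "one checks that the ten underlined graphs are redundant, and that removing them makes all other graphs in their relative groups independent."
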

\begin{proof}
    The algorithm that we used to generate all virtual blown-up representations also groups them by weight 13 and weight 11 relations. Any graphs that might have relations between them are listed in the same line.
    In our case, all relations restrict to the following six groups of graphs.
    \begin{align*}
       \text{edges 12-n: } & & \{73,70,\underline{71},\underline{69} \} & & \{76,74,\underline{75},\underline{72}\} & & \{77,\underline{79},84\} & &  \{97,\underline{96},100 \} \\
       \text{edges 13-n: } & & \{110,106,\underline{108},\underline{105} \} & & \{112,109,\underline{111},\underline{107}\}
    \end{align*}
    One checks that the ten underlined graphs are redundant, and that removing them makes all other graphs in their relative groups indipendent. Moreover, one also checks that this holds indipendently of $(g,n)$.
\end{proof}


\begin{theorem} \label{thm:reducedGK}
    For each $(g,n)$ with $E(g,n)=28$, the projection
    $$ \myB_{g,n} \hspace{.1in}\twoheadrightarrow \hspace{.1in} \left<\VV_{A_2}^{\epsilon,j}\right> \oplus  \left<\VV_{irr}^{\omega-\epsilon,j}\right> \oplus  \left<\VV_{irr}^{\epsilon,j}\right>
    \oplus  \left<\VV_{A_2}^{i,j}\right> \oplus  \left<\VV_{irr}^{i\vee j}\right> \oplus \left<\VV_{irr}^{i\wedge j}\right> $$
    is a quasi-isomorphism, where the six virtual blown-up representations have IDs 70, 98, 97, 106, 125 and 126 respectively (see Table \ref{fig:MatrixDiff4}). It is understood that a graph only appears in the direct sum if it exists for the specific $(g,n)$ pair.
\end{theorem}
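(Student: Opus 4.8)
The plan is to realise the stated projection as the output of an iterated Gaussian elimination driven by the matching recorded in the appended list of virtual blown-up representations. By the preceding Lemma we may replace $\myB_{g,n}$, for each of the four pairs $(g,n)$ with $E(g,n)=28$, by the complex appearing there: a \emph{free} finite-dimensional $\ss_n$-equivariant cochain complex spanned by the actual blown-up graphs completing the virtual representations in $A_3\cup\tilde A_2\cup\tilde B_1\cup\tilde B_{irr}$, graded by the number of internal edges, with differential given in blown-up form by \eqref{equ:Diff1}--\eqref{equ:Diff12}. All weight~$11$ and weight~$13$ relations have been absorbed at this stage, so only $\ss_n$-equivariant linear algebra over $\Q$ remains.

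The matching is the one encoded by the ``second ID next to the arrow'' above each graph in the list: to every virtual representation except the six with IDs $70,97,98,106,125,126$ it assigns a partner, a single term of its differential carrying one more internal edge. First I would check, by comparing term by term with \eqref{equ:Diff1}--\eqref{equ:Diff12}, that the designated partner occurs in the differential of its source with coefficient $\pm1$, so that source and partner span isomorphic $\ss_n$-subrepresentations matched by a unit multiple of $d$. Next I would verify that this matching of virtual representations lifts, uniformly in $(g,n)$, to a matching of the actual generators of $\myB_{g,n}$: the distinguished move only affects the part of a blown-up graph where $e(C_i)>0$, so the $\omega$--$j$ double hairs and the tripods used to complete a virtual representation are carried along bijectively, and a source completion exists for a given $(g,n)$ exactly when its partner does. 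This also explains the proviso in the statement: the six critical representations contribute to $\myB_{g,n}$ precisely when they exist for the particular $(g,n)$.

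To run the elimination one needs the matching to be acyclic. I would establish this by totally ordering the actual generators first by number of internal edges (raised by one by $d$) and then by the combinatorial invariant used to index the list, chosen so that matched pairs are consecutive and every \emph{non}-designated term of the differential of a source lies strictly below its designated partner; this makes the matched part of $d$ upper triangular, so no alternating zig-zag of matching arrows and differential terms can close into a cycle. Given acyclicity, the elementary cancellation lemma (Gaussian elimination) --- eliminating a pair $a,b$ with $\langle da,b\rangle=\pm1$ replaces the complex by an isomorphic subquotient complex on the remaining generators --- applied along this order collapses $\myB_{g,n}$, through an explicit chain of quasi-isomorphisms, onto the complex spanned by the critical representations that exist for the given $(g,n)$; since every step perturbs $d$ only by terms supported on generators eliminated at that or a later stage, tracking the construction shows the resulting quasi-isomorphism is exactly the coordinate projection of the statement. (The residual differential among the six survivors, recorded in Table~\ref{fig:MatrixDiff4}, plays no role here; it is diagonalised afterwards to read off $Z_{g,n}$ and $W_{g,n}$.)

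The genuine content, and the only place the argument could break, lies in the two claims underlying the list: that the $106$ virtual blown-up representations are an exhaustive set in excess~$28$, and that the designated partners never produce a directed cycle once the matching is pushed down to actual graphs for all four pairs $(g,n)$ simultaneously. Everything else is mechanical bookkeeping with \eqref{equ:Diff1}--\eqref{equ:Diff12}. Thus the proof of Theorem~\ref{thm:reducedGK} reduces to the verification performed by the algorithm of Section~\ref{sec:ComputerProgram}, and rests on the correctness of that algorithm.
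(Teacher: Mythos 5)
Your proposal follows essentially the same route as the paper: Gaussian elimination along the designated leading terms (the second IDs in the appended list), performed uniformly in $(g,n)$ and $\ss_n$-equivariantly, with the genuine content delegated to the computer verification of exhaustiveness and of the (quasi-)triangular structure of the differential matrices. The only detail your strict total-order acyclicity argument glosses over is the pair of rows 116 and 117 in Table \ref{fig:MatrixDiff3}, where the paper cannot permute the rows into lower triangular form and instead checks directly that the images of graphs 80 and 81 are independent and span the subspace generated by 116 and 117 --- an invertible $2\times 2$ block rather than two consecutive matched pairs, which your elimination framework still accommodates.
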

\begin{proof}
    We have to compute the images of the three differentials between the four degree classes. Even though the differential acts on individual decorated graphs, since it respects the labeling $1,...,n$ of the hairs we can restrict to describe its action on virtual blown-up representations.
    Thus, we can give matrices of the respective differentials with rows and columns indexed by virtual blown-up representation from our list, they are displayed in Tables \ref{fig:MatrixDiff1}, \ref{fig:MatrixDiff2} and \ref{fig:MatrixDiff3}. The graphs in the rows and columns are in order as they appear in our list.
    
    A white or black cell in each matrix means that the graph in the column has the graph in the row as a zero or non-zero term in its differential respectively. A gray cell means that the value will not be relevant to our computation, just as every other graph in the codomain that doesn't appear amongst the rows of the matrix. \\
    
    Black cells also represent choices of non-zero leading terms in the differentials of each graph. They can be used to carry out gaussian elimination, as the rows can be permuted into a lower triangular form. This holds for all rows except 116 and 117 in Table \ref{fig:MatrixDiff3}. Nonetheless, one checks that the images of graphs 80 and 81 are indipendent and thus span the whole subspace in edge group $13-n$ generated by graphs 116 and 117.

    We emphasize that the elimination process is indipendent of $(g,n)$. Our choices of leading terms preserve the existence range of each virtual blown-up representation. Thus, for each $(g,n)$ pair, either both graphs cancel out in cohomology or they both don't exist. Moreover, these choices also preserve the $\ss_n-$representation contributed to $\myB_{g,n}$.
    
    The relations induced by the image of edge group $e$ on the edge group $e+1$ allow us to ignore the chosen leading terms when computing the image of edge group $e+1$. This is why the columns of each matrix only contain the graphs which don't appear amongst the rows with a black cell of the previous matrix. These leading terms are the second ID displayed above each graph in our list, where the mapping arrow indicates which is the argument and which is the image.

    As a practical note, it is important to exploit gaussian elimination as much as possible by choosing to kill the graphs with the most complicated differential, i.e. $A_3$ graphs and the ones with $\epsilon$ hairs. \\

    We can conclude that annihilating all virtual blown-up representations appearing in a row or column with a black cell of one of the matrices defines a quasi-isomorphism.
    
    One checks that the only (non-redundant) virtual blown-up representations remaining from the whole list are the six ones stated in the theorem. Non-zero terms in the differential of the three remaining graphs in edge group $12-n$ are colored orange. But these terms do not fully annihilate the whole $\ss_n-$representation contributed by the graphs in the rows, so we cannot eliminate them.
\end{proof}

\begin{table}[!h]
    \caption{Differential between degrees $23-n$ and $24-n$}
    \label{fig:MatrixDiff1}
    \vspace{-.1in} \small
    \begin{tabular}{ c c|c|c|c|c|c|c }
        & $10-n$ & \multicolumn{6}{c}{ $A_3$ } \\
        $11-n$ & & 23 & 24 & 25 & 26 & 27 & 28 \\ \hline
        \multirow{5}{*}{ $A_3$ }
        & 34 & \cellcolor{lightgray} & \cellcolor{darkgray} & & & & \\ \cline{2-8}
        & 39 & \cellcolor{lightgray} & \cellcolor{lightgray} & \cellcolor{darkgray} & & & \\ \cline{2-8}
        & 41 & \cellcolor{lightgray} & \cellcolor{lightgray} & \cellcolor{lightgray} & \cellcolor{darkgray} & & \\ \cline{2-8}
        & 44 & \cellcolor{lightgray} & \cellcolor{lightgray} & \cellcolor{lightgray} & \cellcolor{lightgray} & \cellcolor{darkgray} & \\ \cline{2-8}
        & 45 & \cellcolor{lightgray} & \cellcolor{lightgray} & \cellcolor{lightgray} & \cellcolor{lightgray} & \cellcolor{lightgray} & \cellcolor{darkgray} \\ \hline
        \multirow{1}{*}{ $B_1$ }
        & 46 & \cellcolor{darkgray} & & & & & \\
    \end{tabular}
    \normalsize
\end{table}

\begin{table}[!h]
    \caption{Differential between degrees $24-n$ and $25-n$}
    \label{fig:MatrixDiff2}
    \vspace{-.1in} \hspace{-.3in} \tiny
    \begin{tabular}{ c c|c|c|c|c|c|c|c|c|c|c|c|c|c|c|c|c|c|c|c|c|c|c }
        & $11-n$& \multicolumn{12}{c|}{ $A_3$ } & \multicolumn{6}{c|}{ $B_1$ } & \multicolumn{4}{c}{ $B_{irr}$ } \\
        $12-n$ & & 29 & 30 & 31 & 32 & 33 & 35 & 36 & 37 & 38 & 40 & 42 & 43 &   47 & 48 & 49 & 50 & 52 & 51 & 53 & 54 & 55 & 56 \\ \hline
        \multirow{7}{*}{ $A_3$ }
        & 61 & \cellcolor{lightgray} & \cellcolor{lightgray} & \cellcolor{lightgray} & \cellcolor{darkgray} & & & & & & & & & & & & & & & & & & \\ \cline{2-14}
        & 62 & \cellcolor{lightgray} & \cellcolor{lightgray} & \cellcolor{lightgray} & \cellcolor{lightgray} & \cellcolor{lightgray} & \cellcolor{darkgray} & & & & & & & & & & & & & & & \\ \cline{2-14}
        & 64 & \cellcolor{lightgray} & \cellcolor{lightgray} & \cellcolor{lightgray} & \cellcolor{lightgray} & \cellcolor{lightgray} & \cellcolor{lightgray} & \cellcolor{darkgray} & & & & & & & & & & & & & & \\ \cline{2-14}
        & 65 & \cellcolor{lightgray} & \cellcolor{lightgray} & \cellcolor{lightgray} & \cellcolor{lightgray} & \cellcolor{lightgray} & \cellcolor{lightgray} & \cellcolor{lightgray} & \cellcolor{darkgray} & \cellcolor{lightgray} & & & & & & & & & & & & & \\ \cline{2-14}
        & 66 & \cellcolor{lightgray} & \cellcolor{lightgray} & \cellcolor{lightgray} & \cellcolor{lightgray} & \cellcolor{lightgray} & \cellcolor{lightgray} & \cellcolor{lightgray} & \cellcolor{lightgray} & \cellcolor{lightgray} & \cellcolor{darkgray} & & & & & & & & & & & & \\ \cline{2-14}
        & 67 & \cellcolor{lightgray} & \cellcolor{lightgray} & \cellcolor{lightgray} & \cellcolor{lightgray} & \cellcolor{lightgray} & \cellcolor{lightgray} & \cellcolor{lightgray} & \cellcolor{lightgray} & \cellcolor{lightgray} & \cellcolor{lightgray} & \cellcolor{darkgray} & & & & & & & & & & &  \\ \cline{2-14}
        & 68 & \cellcolor{lightgray} & \cellcolor{lightgray} & \cellcolor{lightgray} & \cellcolor{lightgray} & \cellcolor{lightgray} & \cellcolor{lightgray} & \cellcolor{lightgray} & \cellcolor{lightgray} & \cellcolor{lightgray} & \cellcolor{lightgray} & \cellcolor{lightgray} & \cellcolor{darkgray} & & & & & & & & & & \\ \cline{1-14}
        \multirow{1}{*}{ $A_2$ }
        & 73 & \cellcolor{lightgray} & \cellcolor{darkgray} & & & & & & & & & & & & & & & & & & & & \\ \cline{1-14}
        \multirow{10}{*}{ $B_1$ }
        & 78 & \cellcolor{darkgray} & & & & & & & & & & & & & & & & & & & & & \\ \cline{2-20}
        & 82 & \cellcolor{lightgray} & \cellcolor{lightgray} & \cellcolor{lightgray} & \cellcolor{lightgray} & \cellcolor{darkgray} & & & & & & & & & & & & & & & & & \\ \cline{2-20}
        & 86 & \cellcolor{lightgray} & \cellcolor{lightgray} & \cellcolor{lightgray} & \cellcolor{lightgray} & \cellcolor{lightgray} & \cellcolor{lightgray} & \cellcolor{lightgray} & \cellcolor{lightgray} & \cellcolor{lightgray} & \cellcolor{lightgray} & \cellcolor{lightgray} & \cellcolor{lightgray} & \cellcolor{darkgray} & & & & & & & & & \\ \cline{2-20}
        & 88 & \cellcolor{lightgray} & \cellcolor{lightgray} & \cellcolor{lightgray} & \cellcolor{lightgray} & \cellcolor{lightgray} & \cellcolor{lightgray} & \cellcolor{lightgray} & \cellcolor{lightgray} & \cellcolor{lightgray} & \cellcolor{lightgray} & \cellcolor{lightgray} & \cellcolor{lightgray} & \cellcolor{lightgray} & \cellcolor{lightgray} & \cellcolor{darkgray} & & & & & & & \\ \cline{2-20}
        & 89 & \cellcolor{lightgray} & \cellcolor{lightgray} & \cellcolor{lightgray} & \cellcolor{lightgray} & \cellcolor{lightgray} & \cellcolor{lightgray} & \cellcolor{lightgray} & \cellcolor{lightgray} & \cellcolor{lightgray} & \cellcolor{lightgray} & \cellcolor{lightgray} & \cellcolor{lightgray} & \cellcolor{lightgray} & \cellcolor{darkgray} & & & & & & & & \\ \cline{2-20}
        & 91 & \cellcolor{lightgray} & \cellcolor{lightgray} & \cellcolor{lightgray} & \cellcolor{lightgray} & \cellcolor{lightgray} & \cellcolor{lightgray} & \cellcolor{lightgray} & \cellcolor{lightgray} & \cellcolor{lightgray} & \cellcolor{lightgray} & \cellcolor{lightgray} & \cellcolor{lightgray} & \cellcolor{lightgray} & \cellcolor{lightgray} & \cellcolor{lightgray} & \cellcolor{darkgray} & & & & & & \\ \cline{2-20}
        & 94 & \cellcolor{lightgray} & \cellcolor{lightgray} & \cellcolor{lightgray} & \cellcolor{lightgray} & \cellcolor{lightgray} & \cellcolor{lightgray} & \cellcolor{lightgray} & \cellcolor{lightgray} & \cellcolor{lightgray} & \cellcolor{lightgray} & \cellcolor{lightgray} & \cellcolor{lightgray} & \cellcolor{lightgray} & \cellcolor{lightgray} & \cellcolor{lightgray} & \cellcolor{lightgray} & \cellcolor{darkgray} & & & & & \\ \cline{2-20}
        & 93 & \cellcolor{lightgray} & \cellcolor{lightgray} & \cellcolor{lightgray} & \cellcolor{lightgray} & \cellcolor{lightgray} & \cellcolor{lightgray} & \cellcolor{lightgray} & \cellcolor{lightgray} & \cellcolor{lightgray} & \cellcolor{lightgray} & \cellcolor{lightgray} & \cellcolor{lightgray} & \cellcolor{lightgray} & \cellcolor{lightgray} & \cellcolor{lightgray} & \cellcolor{lightgray} & \cellcolor{lightgray} & \cellcolor{darkgray} & & & \\ \cline{2-20}
        & 77 & \cellcolor{lightgray} & \cellcolor{lightgray} & \cellcolor{darkgray} & & & & & & & & & & & & & & & & & & & \\ \cline{2-20}
        & 84 & \cellcolor{lightgray} & \cellcolor{lightgray} & \cellcolor{lightgray} & \cellcolor{lightgray} & \cellcolor{lightgray} & \cellcolor{lightgray} & \cellcolor{lightgray} & & \cellcolor{darkgray} & & & & & & & & & & & & & \\ \cline{1-24}
        \multirow{4}{*}{ $B_{irr}$ }
        & 101 & \cellcolor{lightgray} & \cellcolor{lightgray} & \cellcolor{lightgray} & \cellcolor{lightgray} & \cellcolor{lightgray} & \cellcolor{lightgray} & \cellcolor{lightgray} & \cellcolor{lightgray} & \cellcolor{lightgray} & \cellcolor{lightgray} & \cellcolor{lightgray} & \cellcolor{lightgray} & & & & & & & \cellcolor{lightgray} & \cellcolor{darkgray} & \\ \cline{2-24}
        & 103 & \cellcolor{lightgray} & \cellcolor{lightgray} & \cellcolor{lightgray} & \cellcolor{lightgray} & \cellcolor{lightgray} & \cellcolor{lightgray} & \cellcolor{lightgray} & \cellcolor{lightgray} & \cellcolor{lightgray} & \cellcolor{lightgray} & \cellcolor{lightgray} & \cellcolor{lightgray} & & & & & & & \cellcolor{lightgray} & \cellcolor{lightgray} & \cellcolor{darkgray} \\ \cline{2-24}
        & 104 & \cellcolor{lightgray} & \cellcolor{lightgray} & \cellcolor{lightgray} & \cellcolor{lightgray} & \cellcolor{lightgray} & \cellcolor{lightgray} & \cellcolor{lightgray} & \cellcolor{lightgray} & \cellcolor{lightgray} & \cellcolor{lightgray} & \cellcolor{lightgray} & \cellcolor{lightgray} & & & & & & & \cellcolor{lightgray} & \cellcolor{lightgray} & \cellcolor{lightgray} & \cellcolor{darkgray} \\ \cline{2-24}
        & 100 & \cellcolor{lightgray} & \cellcolor{lightgray} & \cellcolor{lightgray} & \cellcolor{lightgray} & \cellcolor{lightgray} & \cellcolor{lightgray} & \cellcolor{lightgray} & \cellcolor{lightgray} & \cellcolor{lightgray} & \cellcolor{lightgray} & \cellcolor{lightgray} & \cellcolor{lightgray} & & & & & & & \cellcolor{darkgray} & & &
    \end{tabular}
    \normalsize \vspace{.2in}
\end{table}

\begin{table}[!h]
    \caption{Differential between degrees $25-n$ and $26-n$}
    \label{fig:MatrixDiff3}
    \vspace{-.1in} \tiny
    \begin{tabular}{ c c|c|c|c|c|c|c|c|c|c|c|c|c|c|c|c|c|c|c|c|c }
        & $12-n$ & \multicolumn{5}{c|}{ $A_3$ } & \multicolumn{3}{c|}{$A_2$} & \multicolumn{8}{c|}{$B_1$} & \multicolumn{4}{c}{$B_{irr}$}\\
        $13-n$ & & 57 & 58 & 59 & 60 & 63 &   70 & 76 & 74   & 80 & 81 & 87 & 85 & 83 & 90 & 92 & 95 & 98 &   99 & 102 & 97 \\ \hline
        \multirow{4}{*}{$A_2$}
        & 110 & \cellcolor{lightgray} & \cellcolor{lightgray} & \cellcolor{lightgray} & \cellcolor{darkgray} & & & & & & & & & & & & & & & \\ \cline{2-10}
        & 106 & \cellcolor{lightgray} & \cellcolor{lightgray} & \cellcolor{lightgray} & \cellcolor{lightgray} & \cellcolor{lightgray} & \cellcolor{orange} & & & & & & & & & & & & & & \\ \cline{2-10}
        & 112 & \cellcolor{lightgray} & \cellcolor{lightgray} & \cellcolor{lightgray} & \cellcolor{lightgray} & \cellcolor{lightgray} & & \cellcolor{darkgray} & & & & & & & & & & & & \\ \cline{2-10}
        & 109 & \cellcolor{lightgray} & \cellcolor{lightgray} & \cellcolor{lightgray} & \cellcolor{lightgray} & \cellcolor{lightgray} & \cellcolor{orange} & & \cellcolor{darkgray} & & & & & & & & & & & \\ \cline{1-10}
        \multirow{12}{*}{$B_1$}
        & 113 & \cellcolor{darkgray} & & & & & & & & & & & & & & & & & & \\ \cline{2-18}
        & 115 & \cellcolor{lightgray} & \cellcolor{darkgray} & & & & & & & & & & & & & & & & & \\ \cline{2-18}
        & 114 & \cellcolor{lightgray} & \cellcolor{lightgray} & \cellcolor{darkgray} & & & & & & & & & & & & & & & & \\ \cline{2-18}
        & 116 & \cellcolor{lightgray} & \cellcolor{lightgray} & \cellcolor{lightgray} & \cellcolor{lightgray} & \cellcolor{lightgray} & & & & \cellcolor{darkgray} & \cellcolor{lightgray} & & & & & & & & & \\ \cline{2-18}
        & 117 & \cellcolor{lightgray} & \cellcolor{lightgray} & \cellcolor{lightgray} & \cellcolor{lightgray} & \cellcolor{lightgray} & & & & \cellcolor{lightgray} & \cellcolor{darkgray} & & & & & & & & & \\ \cline{2-18}
        & 121 & \cellcolor{lightgray} & \cellcolor{lightgray} & \cellcolor{lightgray} & \cellcolor{lightgray} & \cellcolor{lightgray} & & & & & & \cellcolor{darkgray} & & & & & & & & \\ \cline{2-18}
        & 118 & \cellcolor{lightgray} & \cellcolor{lightgray} & \cellcolor{lightgray} & \cellcolor{lightgray} & \cellcolor{darkgray} & & & & & & & & & & & & & & \\ \cline{2-18}
        & 120 & \cellcolor{lightgray} & \cellcolor{lightgray} & \cellcolor{lightgray} & \cellcolor{lightgray} & \cellcolor{lightgray} & & & & & & & \cellcolor{darkgray} & & & & & & & \\ \cline{2-18}
        & 119 & \cellcolor{lightgray} & \cellcolor{lightgray} & \cellcolor{lightgray} & \cellcolor{lightgray} & \cellcolor{lightgray} & & & & & & & & \cellcolor{darkgray} & & & & & & \\ \cline{2-18}
        & 122 & \cellcolor{lightgray} & \cellcolor{lightgray} & \cellcolor{lightgray} & \cellcolor{lightgray} & \cellcolor{lightgray} & & & & & & & & & \cellcolor{darkgray} & & & & & \\ \cline{2-18}
        & 123 & \cellcolor{lightgray} & \cellcolor{lightgray} & \cellcolor{lightgray} & \cellcolor{lightgray} & \cellcolor{lightgray} & & & & & & & & & & \cellcolor{darkgray} & & & & \\ \cline{2-18}
        & 124 & \cellcolor{lightgray} & \cellcolor{lightgray} & \cellcolor{lightgray} & \cellcolor{lightgray} & \cellcolor{lightgray} & & & & & & & & & & & \cellcolor{darkgray} & & & \\ \hline
        \multirow{4}{*}{$B_{irr}$}
        & 125 & \cellcolor{lightgray} & \cellcolor{lightgray} & \cellcolor{lightgray} & \cellcolor{lightgray} & \cellcolor{lightgray} & & & & & & & & & & & & & & & \cellcolor{orange} \\ \cline{2-22}
        & 126 & \cellcolor{lightgray} & \cellcolor{lightgray} & \cellcolor{lightgray} & \cellcolor{lightgray} & \cellcolor{lightgray} & & & & & & & & & & & & \cellcolor{orange} & & & \\ \cline{2-22}
        & 127 & \cellcolor{lightgray} & \cellcolor{lightgray} & \cellcolor{lightgray} & \cellcolor{lightgray} & \cellcolor{lightgray} & & & & & & & & & & & & \cellcolor{orange} & \cellcolor{darkgray} & & \cellcolor{orange} \\ \cline{2-22}
        & 128 & \cellcolor{lightgray} & \cellcolor{lightgray} & \cellcolor{lightgray} & \cellcolor{lightgray} & \cellcolor{lightgray} & & & & & & & & & & & & \cellcolor{orange} & & \cellcolor{darkgray} &
    \end{tabular}
    \normalsize
\end{table}

\newpage

\begin{corollary} \label{thm:GKCOHOMOLOGY}
    The cohomology groups of $\myB_{g,n}$ are as follows.
    \begin{align*}
        H^k(\myB_{2,11}) &=
        \scalebox{.95}{$\begin{cases}
            V_{1^{11}} & \text{for $k=14$} \\
            V_{21^{9}} \oplus V_{221^{7}} \oplus V_{31^{8}}^{\oplus 2} & \text{for $k=15$} \\
            0 & \text{otherwise}
        \end{cases}$}
        &
        H^k(\myB_{4,8}) &=
        \scalebox{.95}{$\begin{cases}
            V_{1^{8}}^{\oplus 2} & \text{for $k=17$} \\
            V_{21^{6}} \oplus V_{221^{4}} \oplus V_{31^{5}}^{\oplus 3} & \text{for $k=18$} \\
            0 & \text{otherwise}
        \end{cases}$}
        \\
        H^k(\myB_{6,5}) &=
        \scalebox{.95}{$\begin{cases}
            V_{1^{5}}^{\oplus 2} & \text{for $k=20$} \\
            V_{21^{3}} \oplus V_{221} \oplus V_{31^{2}}^{\oplus 3} & \text{for $k=21$} \\
            0 & \text{otherwise}
        \end{cases}$}
        &
        H^k(\myB_{8,2}) &=
        \scalebox{.95}{$\begin{cases}
            V_{1^{2}}^{\oplus 2} & \text{for $k=23$} \\
            0 & \text{otherwise}
        \end{cases}$}
    \end{align*}
    In particular, they are no longer concentrated in top degree, and as in excess 2, the pair of maximal genus $(8,2)$ vanishes in top degree.
\end{corollary}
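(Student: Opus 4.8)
The plan is to feed the reduction of Theorem~\ref{thm:reducedGK} into a finite representation‑theoretic computation. Fix a pair $(g,n)$ with $E(g,n)=28$. By Theorem~\ref{thm:reducedGK}, $\myB_{g,n}$ is quasi‑isomorphic to a complex concentrated in the two cohomological degrees $25-n$ and $26-n$, namely
\[
0\;\longrightarrow\;\langle 70\rangle\oplus\langle 98\rangle\oplus\langle 97\rangle \;\xrightarrow{\ \bar d\ }\; \langle 106\rangle\oplus\langle 125\rangle\oplus\langle 126\rangle\;\longrightarrow\;0,
\]
where a summand is dropped when the corresponding virtual blown‑up representation lies outside its existence range for $(g,n)$, and $\bar d$ is the residual differential recorded by the orange cells of Table~\ref{fig:MatrixDiff3}. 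Inspecting that table, columns $70$, $97$ and $98$ each retain a single orange entry after the Gaussian elimination — in rows $106$, $125$ and $126$ respectively — while the three cancelled source graphs $74$, $99$, $102$ each have only one term in their differential, so the elimination introduces no cross terms; hence $\bar d$ splits as $\bar d=\bar d_1\oplus\bar d_2\oplus\bar d_3$ with $\bar d_1\colon\langle 70\rangle\to\langle 106\rangle$, $\bar d_2\colon\langle 97\rangle\to\langle 125\rangle$, $\bar d_3\colon\langle 98\rangle\to\langle 126\rangle$, all $\ss_n$‑equivariant. Consequently $H^{k}(\myB_{g,n})=0$ for $k\notin\{25-n,26-n\}$, while $H^{25-n}(\myB_{g,n})=\bigoplus_i\ker\bar d_i$ and $H^{26-n}(\myB_{g,n})=\bigoplus_i\coker\bar d_i$.

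It then remains, for each of the four pairs $(2,11),(4,8),(6,5),(8,2)$, to do three things. First, read off from the appended list the existence range of each of the six graphs — i.e.\ whether it can be completed to a $(g,n)$ decoration by attaching $\omega$–$j$ double hairs and tripods — so as to know which $\bar d_i$ are present. Second, decompose each occurring $\langle\VV\rangle$ into Specht modules: $\langle\VV\rangle$ is spanned by the isomorphism classes of labellings of the $j$‑hairs of $\VV$ by $\{1,\dots,n\}$, with $\ss_n$ acting through the sign of the induced permutation of the eleven $\omega$‑labelled half‑edges at $\bar v$ and of the internal edges, and since the Lemma preceding Theorem~\ref{thm:reducedGK} has already eliminated every generator entering a weight‑$11$ or weight‑$13$ relation, no further quotient is needed; each $\langle\VV\rangle$ is then an explicit induced representation whose $V_\lambda$‑content is a character computation. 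Third, compute the $\ss_n$‑equivariant rank of each $\bar d_i$: using the graphical splitting rules \eqref{equ:Diff11} and \eqref{equ:Diff12} for the weight‑$11$ and weight‑$12$ decorations, write the matrix of $\bar d_i$ on the labelled‑graph basis and, one isotypic type $V_\lambda$ at a time (Schur), find the rank of the induced map of multiplicity spaces, which yields $\ker\bar d_i$ and $\coker\bar d_i$. Assembling over $i$ and over the four pairs produces the two tables in the statement, and the closing remarks follow at once: for $(2,11),(4,8),(6,5)$ the group $H^{25-n}$ is nonzero, so the cohomology is not concentrated in the top degree $26-n$, while for $(8,2)$ one finds $\coker\bar d=0$, that is $H^{24}(\myB_{8,2})=0$, vanishing in top degree as recorded in the statement.

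The one genuinely delicate step is the last one. The maps $\bar d_1,\bar d_2,\bar d_3$ are \emph{not} surjective — this is precisely why they were not eliminated in Theorem~\ref{thm:reducedGK} — so both kernels and cokernels must be pinned down, not merely an Euler characteristic, and the correct answers hinge on tracking all signs: the sign with which a permutation of the eleven $\omega$‑half‑edges acts in the coinvariants, and the $(-1)$'s together with the half‑edge reorderings appearing in \eqref{equ:Diff11}–\eqref{equ:Diff12}. Once the signs are fixed, the remaining work is a finite, machine‑checkable linear‑algebra computation over $\Q$ on the $106$‑element list of virtual blown‑up representations, and the bookkeeping for existence ranges and Specht decompositions uses nothing beyond what is already deployed in Section~\ref{sec:Computations}.
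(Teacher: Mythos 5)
Your proposal is correct and follows essentially the same route as the paper: both reduce via Theorem~\ref{thm:reducedGK} to the six-generator two-term complex, observe (using that the eliminated columns $74$, $99$, $102$ each have a single nonzero entry) that the induced differential splits into the three blocks $70\to 106$, $97\to 125$, $98\to 126$, and then read off kernels and cokernels Specht-isotypically for each $(g,n)$ according to the existence ranges. The only difference is presentational: the paper records the resulting isotypic ranks directly in Table~\ref{fig:MatrixDiff4} rather than spelling out the no-cross-term argument for the Gaussian elimination.
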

\begin{proof}
    It is sufficient to compute the cohomology of the complex on the right-hand side of Theorem \ref{thm:reducedGK}. We display its differential in Table \ref{fig:MatrixDiff4}. This time we have to expand the irreducible $\ss_n-$representations because some in the domain lie in the kernel and some in the codomain only exist for $n\geq 5$. As the differential is $\ss_n-$equivariant, the only non-zero terms con be between specht modules corresponding to the same partition of $n$.
    
    Not all graphs in the image of the differential are amongst the rows (see the orange cells of Table \ref{fig:MatrixDiff3}), but one checks that those have been projected to zero through colmn operations in Theorem \ref{thm:reducedGK}. Thus, the only non-zero terms in the projection are the displayed black cells, whenever they exist.
    The kernel and image can then be directly read off of Table \ref{fig:MatrixDiff4} to compute the cohomology groups in each $(g,n)$.
\end{proof}

\begin{table}[!h]
    \caption{Differential after gaussian elimination}
    \label{fig:MatrixDiff4}
    \vspace{-.1in}
    \begin{tabular}{ c c|c|c|c|c|c|c }
        & $12-n$ edges
        & \multicolumn{2}{c|}{ $\substack{ \VV_{A_2}^{\epsilon,j} \text{\;\;ID: 70} \\ \text{n: 11, 8, 5, 2} \\
            \begin{tikzpicture}
                \node (w1) at (0,-.6) {\tiny$\omega$};
                \node (w2) at (.5,-.6) {\tiny$\epsilon$};
                \node[int] (v1) at (0.25,0) {};
                \node (j2) at (.25,.6) {\tiny$j$};
                \draw (v1) edge[crossed](w1) edge(w2)edge(j2);
            \end{tikzpicture}
            }$ }
        & \multicolumn{2}{c|}{ $\substack{ \VV_{irr}^{\omega-\epsilon,j} \text{\;\;ID: 98} \\ \text{n: 8, 5, 2} \\
            \begin{tikzpicture}
                \node (w1) at (0,-.5) {\tiny$\omega$};
                \node (w2) at (.4,-.5) {\tiny$\epsilon$};
                \draw (w1) edge[bend left=80, distance=.2cm] (w2);
                \node (w3) at (.8,-.6) {\tiny$\omega$};
                \node (w4) at (1.2,-.6) {\tiny$\omega$};
                \node (j2) at (1,.6) {\tiny$j$};
                \node[int] (v2) at (1,0) {};
                \draw(v2)edge(w3)edge(w4)edge(j2);
                \node (ww) at (1.6,-.6) {\tiny$\omega$};
                \node[int] (vv) at (1.6,0) {};
                \draw (vv) edge(ww) edge[loop, crossed, distance=0.4cm] (vv);
            \end{tikzpicture}
            }$ }
        & \multicolumn{2}{c}{ $\substack{ \VV_{irr}^{\epsilon,j} \text{\;\;ID: 97} \\ \text{n: 11, 8, 5, 2} \\
            \begin{tikzpicture}
                \node (w1) at (0,-.6) {\tiny$\epsilon$};
                \node (w2) at (.8,-.6) {\tiny$\omega$};
                \node[int] (v1) at (0.8,0) {};
                \node (j2) at (0,.4) {\tiny$j$};
                \draw (w1) edge(j2) (v1) edge(w2) edge[loop, crossed, distance=0.4cm](v1);
            \end{tikzpicture}
            }$ }
        \\  $13-n$ edges  &
        & $V_{1^{n}}$ & $V_{21^{n-2}}$ & $V_{1^{n}}$ & $V_{21^{n-2}}$ & $V_{1^{n}}$ & $V_{21^{n-2}}$ 
        \\[.1cm] \hline
        \multirow{5}{*}{$\substack{ \VV_{A_2}^{i,j} \text{\;\;ID: 106} \\ \text{n: 11, 8, 5, 2 } \\
            \begin{tikzpicture}
                \node (w1) at (0,-.6) {\tiny$\omega$};
                \node (w2) at (.5,-.6) {\tiny$\omega$};
                \node[int] (v1) at (0,0) {};
                \node[int] (v2) at (.5,0) {};
                \node (j1) at (0,.6) {\tiny$i$};
                \node (j2) at (.5,.6) {\tiny$j$};
                \draw (v1) edge(v2) edge[crossed](w1)edge(j1) (v2)edge(w2)edge(j2);
            \end{tikzpicture}
            }$}
        & $V_{1^{n}}$ & \cellcolor{darkgray} & &  & & & \\ \cline{2-8}
        & $V_{21^{n-2}}$ & & \cellcolor{darkgray} & &  & & \\ \cline{2-8}
        & \tiny$(n\geq 5)$\normalsize $V_{221^{n-4}}$ & & & & & & \\ \cline{2-8}
        & \tiny$(n\geq 5)$\normalsize $V_{21^{n-2}}$ &  & & & & & \\ \cline{2-8}
        & \tiny$(n\geq 5)$\normalsize $V_{31^{n-3}}$ & & & & & & \\ \hline
        \multirow{2}{*}[-1mm]{$\substack{ \VV_{irr}^{i\vee j} \text{\;\;ID: 125} \\ \text{n: 11, 8, 5, 2 } \\
            \begin{tikzpicture}
                \node (w1) at (.25,-.6) {\tiny$\omega$};
                \node (w2) at (.75,-.6) {\tiny$\omega$};
                \node[int] (v1) at (.25,0) {};
                \node[int] (v2) at (.75,0) {};
                \node (j1) at (0,.6) {\tiny$i$};
                \node (j2) at (.5,.6) {\tiny$j$};
                \draw (v1) edge(w1)edge(j1)edge(j2) (v2)edge(w2) edge[loop, crossed, distance=0.4cm] (v2);
            \end{tikzpicture}
            }$}
        & $V_{21^{n-2}}$ & & & & & & \cellcolor{darkgray} \\[.7cm] \cline{2-8}
        & \tiny$(n\geq 5)$\normalsize $V_{31^{n-3}}$ & &  & & & & \\[.7cm] \hline
        \multirow{2}{*}[-1mm]{$\substack{ \VV_{irr}^{i\wedge j} \text{\;\;ID: 126} \\ \text{n: 8, 5, 2 } \\
            \begin{tikzpicture}
                \node (w1) at (0,-.6) {\tiny$\omega$};
                \node (w2) at (.4,-.6) {\tiny$\omega$};
                \node (j1) at (0.2,.6) {\tiny$i$};
                \node[int] (v1) at (0.2,0) {};
                \draw(v1)edge(w1)edge(w2)edge(j1);
                \node (w1) at (.8,-.6) {\tiny$\omega$};
                \node (w2) at (1.2,-.6) {\tiny$\omega$};
                \node (j2) at (1,.6) {\tiny$j$};
                \node[int] (v2) at (1,0) {};
                \draw(v2)edge(w3)edge(w4)edge(j2);
                \node (ww) at (1.5,-.6) {\tiny$\omega$};
                \node[int] (vv) at (1.5,0) {};
                \draw (vv) edge(ww) edge[loop, crossed, distance=0.4cm] (vv);
            \end{tikzpicture}
            }$}
        & $V_{21^{n-2}}$  & & & & \cellcolor{darkgray} & & \\[.7cm] \cline{2-8}
        & \tiny$(n\geq 5)$\normalsize $V_{31^{n-3}}$  & & & & & & \\[.7cm]
    \end{tabular}
\end{table}

\newpage

\section{Computer program} \label{sec:ComputerProgram}

We use Python to generate all possible virtual blown-up representations with a certain upper excess bound $E_{max}$. The script is in the format of a Jupyter Notebook. The graphs are generated using Sage's builtin interface for the Nauty library and are later wrapped in Python objects. We use Pandas to store the large amounts of python objects in dataframes in order to use data management tools like querying, sorting and grouping. Finally, we use Matplotlib to either show the graphs in the output cells or save them in a pdf file. The project is saved in a GitHub repository \cite{github}, where one can also find the virtual blown-up lists for excess 26,27 and 29. We have not thoroughly verified the excess 29 list as there are almost 500 virtual blown-up representations, but a first computation suggests that only 12 may remain after the gaussian elimination process.

We emphasize that computer calculations are hard and allow room for many oversights, so our results should be taken with a grain of salt.

\subsection{Classification of the generators}
From now on we think of generators of $\bGK^{12,1}_{g,n}$ in terms of their graphical depiction, i.e. as stable $(g,n)-$graphs $G$ with additional graph-theoretical data that uniquely determines the cohomology decoration (up to a sign ambiguity due to the orderings).
In particular,
\begin{itemize}
    \item case $(A)$ generators are thought of as having two special vertices $\bar{v}$, $\tilde{v}$ connected by a crossed internal edge.
    \item case $(B)$ generators with $\delta_{irr}$ decoration are thought of as having a loop at $\tilde{v}$ and $g_{\tilde{v}}=0$ instead of $g_{\tilde{v}}=1$.
    \item case $(B)$ generators with $\delta\{\substack{A \\ A'}\}$ decoration have three special vertices $\bar{v},\tilde{v}_1,\tilde{v}_2$, where the last two are connected by a crossed internal edge.
\end{itemize}

There are four families of generators corresponding to the direct sums in \eqref{equ:G}, which we call $A_3,A_2,B_1$, and $B_{irr}$ respectively. Denote by $G_{\bar{v},B}^{\tilde{v}}$, $G_{\bar{v},B}^{\tilde{v}_1,\tilde{v}_2}$, $G_{\bar{v},B}^{\tilde{v},irr}$ the decorated graphs in each family, where the variables range in each case over the following objects:
\begin{enumerate}
    \item[\large $\substack{A_3,A_2\\ B_1,B_{irr}}$\normalsize : ] Any $(g,n)$-stable graph $G$ with exactly one genus $1$ vertex $\bar{v}$ and every other vertex having genus $0$.
    \item[$A_3,A_2$: ] Any choice of a crossed internal edge ($\bar{v},\tilde{v}$) adjacent to $\bar{v}$. We denote by $N_{\bar{v}}, N_{\tilde{v}}$ the set of hairs and half-edges adjacent to $\bar{v}$ or $\tilde{v}$ respectively (excluding the two crossed half-edges). We fix a canonical ordering of the set $N_{\bar{v}}$. In addition, there's any choice of a subset $B\subseteq N_{\bar{v}}$ with $|B|=10$. Stability at $\tilde{v}$ then implies $|N_{\tilde{v}}|\geq 3$ or $|N_{\tilde{v}}|=2$; this condition differentiates the families $A_3$ and $A_2$. $G$ is required to be simple, except possibly with loops at $\bar{v}$ or multiple edges adjacent to $\bar{v}$.
    \item[$B_1,B_{irr}$: ] Any choice of subset $B\subseteq N_{\bar{v}}$ with $|B|=11$.  We fix a canonical ordering on $N_{\bar{v}}$.
    \item[$B_1$:] Any choice of a crossed internal edge $\tilde{v}=(\tilde{v}_1,\tilde{v}_2)$ not adjacent to $\bar{v}$. We denote by $N_{\tilde{v}}$ the hairs and half-edges adjacent to $\tilde{v}_1$ or $\tilde{v}_2$ excluding the two crossed half-edges. $G$ is required to be simple, except possibly with loops at $\bar{v}$ or multiple edges parallel to the crossed edge.
    \item[$B_{irr}$:] $G$ is required to be simple, except possibly with loops at $\bar{v}$, and with exactly one vertex $\tilde{v}\neq\bar{v}$ with unique neighbour $\bar{v}$ and one loop, which we call the crossed edge.
    \item[\large $\substack{A_3,A_2\\ B_1,B_{irr}}$\normalsize : ] A choice of alternating order of the internal edges of $G$ which are not crossed. The resulting graph with features is required to have no odd symmetries. Here a symmetry is a graph automorphism that preserves all the choices made above.
\end{enumerate}

\begin{proposition} \label{thm:GKcombinatorial}
    For all $(g,n)$, it holds 
    \begin{multline*}
        \; \bGK^{12,1}_{g,n} = \;\;
        \bigoplus_{\left[\substack{G,\bar{v},\tilde{v} \\ |N_{\bar{v}}|\geq 3, B } \right]} \left< G_{\bar{v},B}^{\tilde{v}} \right>
        \oplus \bigg( \bigoplus_{\left[\substack{G,\bar{v},\tilde{v} \\ |N_{\tilde{v}}|=2,B }\right]} \left< G_{\bar{v},B}^{\tilde{v}} \right> \bigg) \bigg/ \{\substack{\text{weight 13}\\ \text{relations}}\} \\
        \oplus \bigg( \bigoplus_{\left[\substack{G,\bar{v} \\ B,\tilde{v}_1, \tilde{v}_2}\right]} \left<G_{\bar{v},B}^{\tilde{v}_1,\tilde{v}_2} \right> \bigg) \bigg/ \{\substack{\text{weight 11, 2, and}\\ \text{3b) relations}}\}
        \oplus \bigg( \bigoplus_{\left[G,\bar{v},\tilde{v},B\right]} \left< G_{\bar{v},B}^{\tilde{v},irr} \right> \bigg) \bigg/ \{\substack{\text{weight 11}\\ \text{relations}}\},
    \end{multline*}
    where the variables in the square brackets range over all the admissible choices described above for the four families $A_3,A_2,B_1,B_{irr}$, up to isomorphisms of graphs with these features.
\end{proposition}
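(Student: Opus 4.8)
Since $\bGK^{12,1}_{g,n}=G/R$ by Proposition~\ref{bGKprojection}, the plan is to read the asserted description directly off the explicit formula \eqref{equ:G} for $G$: translate each cohomology decoration into the graph-theoretic features of Section~\ref{subsec:CohomologyGroups}, match the automorphism groups and coinvariants to the ``no odd symmetry'' condition, and sort the relations generating $R$ --- together with the residual weight-relations \eqref{equ:Relations2}, \eqref{equ:Relations11}, \eqref{equ:Relations13} --- into the four blocks.

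\emph{The dictionary.} Formula \eqref{equ:G} writes $G$ as a sum of four blocks, each indexed by an isomorphism class of a stable $(g,n)$-graph $\Gamma$ with a unique genus-$1$ vertex $\bar v$ (all other vertices of genus $0$; properties 1), 2), 2b)), a choice of special-vertex data, and a decoration of $\bar v$ --- and in the cases $(B)$, of $\tilde v$ --- by one of $Z_{B\subseteq A}$, $\omega_B\otimes\delta\{\substack{A \\ A'}\}$, $\omega_B\otimes\delta_{irr}$, modulo the relevant automorphism group acting by the sign on the internal-edge ordering times the sign on the ordering of $B$. First I would apply the graphical depictions of Section~\ref{subsec:CohomologyGroups}: $Z_{B\subseteq A}$ splits $\bar v$ into a genus-$1$ vertex carrying $A$ and a genus-$0$ vertex $\tilde v$ carrying $A^c$, joined by a crossed edge, with arrows on the $10$ hairs in $B$; $\delta\{\substack{A \\ A'}\}$ splits a genus-$0$ vertex into $\tilde v_1,\tilde v_2$ joined by a crossed edge; $\delta_{irr}$ becomes a crossed loop on a genus-$0$ vertex; $\omega_B$ becomes arrows on the $11$ hairs in $B$. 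This identifies the combinatorial datum of each block with a graph-with-features of exactly the type described for $A_3$, $A_2$, $B_1$, $B_{irr}$ in the classification. I would then verify that $\Aut_{\Gamma,\bar v}^{B,A}$, $\Aut_{\Gamma,\bar v,\tilde v}^{B,\{A,A'\}}$ and $\Aut_{\Gamma,\bar v,\tilde v}^{B}$ coincide with the symmetry groups of the corresponding feature graphs, that under the dictionary the sign by which they act on $\Q[-1]^{\otimes|E(\Gamma)|}$ and on the line $\langle Z_{B\subseteq A}\rangle$ (resp.\ $\langle\omega_B\rangle$) becomes the sign of the induced permutation of the non-crossed internal edges, and hence that passing to coinvariants is exactly discarding feature graphs with odd symmetry after choosing an alternating order of the non-crossed internal edges --- as in the final clause of the classification, and just as in \cite[Section~3]{CLPW2}.

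\emph{The relations.} The weight-$13$ relations \eqref{equ:Relations13} relate only classes $Z_{B\subseteq A}$ with $|A^c|=2$ and common $B\sqcup A^c$, hence live in the $A_2$ block; \eqref{equ:Relations11} relates only $\omega_B$ with $B$ inside a common $12$-set, hence survives in the $B_1$ and $B_{irr}$ blocks; \eqref{equ:Relations2} and the relation 3b) are of case $(B)$ with a $\delta\{\substack{A \\ A'}\}$ decoration and so sit in the $B_1$ block. It then remains to absorb 3a) and 4). Relation 3a) kills every $A_3$-generator whose $\tilde v$ carries a loop --- i.e.\ every $\Gamma$ with a loop at $\bar v$ both of whose half-edges lie in $A^c$ --- which is why the $A_3$ and $A_2$ families are declared simple apart from loops at $\bar v$ and multiple edges at $\bar v$. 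Relation 4) identifies, up to the scalar $\tfrac1{12}$, each $A_2$-generator whose $A^c$ is the pair of half-edges of a loop at $\tilde v$ with the $B_{irr}$-generator obtained by promoting that loop-vertex to a genus-$1$ vertex joined only to $\bar v$, the new half-edge being appended to $B$; I would check this assignment is injective on isomorphism classes, so that quotienting by 4) merely deletes those $A_2$-generators and leaves the $B_{irr}$ block intact --- matching the constraint that $A_2$ graphs carry no loop at $\tilde v$. A final inspection confirms that the remaining clauses of the classification (loops only at $\bar v$, multiple edges only where permitted, the single $B_{irr}$ loop-vertex) are the combinatorial shadow of properties 1), 2), 2b) and relation 3b).

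\emph{Main obstacle.} The genuine difficulty is the sign-and-automorphism bookkeeping in the dictionary step: one must confirm that the $\Aut_\Gamma$-action by the sign on $\Q[-1]^{\otimes|E(\Gamma)|}$, composed with the sign of the reordering of $B$ inside $\omega_B$ or $Z_{B\subseteq A}$, reassembles after vertex-splitting into the sign of the induced permutation of the non-crossed internal edges of the feature graph --- in particular that the crossed edges and the $B$-arrows contribute no extra sign, so that ``odd symmetry'' is unambiguous. Everything else is a careful re-indexing of \eqref{equ:G} together with the routine verification that 3a), 3b) and 4) are subsumed by the stated combinatorial constraints.
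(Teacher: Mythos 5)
Your proposal is correct and follows essentially the same route as the paper, whose own proof is just a three-sentence summary of exactly this reduction: realize $\bGK^{12,1}_{g,n}$ as the quotient of \eqref{equ:G} by 3a), 3b), 4), resolve coinvariants by discarding odd-symmetry generators, absorb 3a) and 4) into the simplicity requirement on the $A_3$, $A_2$ families (no loop at $\tilde v$), and leave the 3b) relations in the $B_1$ block. Your sketch is in fact more detailed than the paper's argument, correctly flagging the sign/automorphism bookkeeping and the injectivity of the relation-4) matching as the points that actually need checking.
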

\begin{proof}
    $\myB_{g,n}$ is realized as the quotient of \ref{equ:G} by the relations 3a),3b) and 4) in Section \ref{sec:SimplifiedGK}. Coinvariants relations are resolved by ignoring generators with odd symmetries.
    Relations 3a) and 4) are resolved by ignoring case $(A)$ generators with a loop at $\tilde{v}$, i.e. by requiring that all graphs in families $A_3$ and $A_2$ be simple (except possibly at $\bar{v}$). The remaining 3b) relations restrict to the family $B_1$.
\end{proof}

\subsection{Generating blown-up components} \label{subsec:genBlownup}

The algorithm starts by generating all possible connected components of the blow-up at $\bar{v}$ of decorated graphs $G_{\bar{v},B}^{\tilde{v}}$, $G_{\bar{v},B}^{\tilde{v}_1,\tilde{v}_2}$, $G_{\bar{v},B}^{\tilde{v},irr}$.

The crucial observation is that, if $C_1,..,C_k$ are the blown-up components of $G$, graph automorphisms $\phi:G\rightarrow G$ fixing $\bar{v}$ precisely correlate to permutations $\sigma\in\ss_k$ and graph isomorphisms $\phi_i:C_i\rightarrow C_{\sigma(i)}$. Thus, the isomorphism class (preserving all additional data) of a generator $G$ is uniquely determined by the list of blown-up components up to reordering and isomorphism of the components preserving the features $\epsilon,\omega$, $j$ and the crossed edge.

In addition, a generator has an odd symmetry if and only if it has either a blown-up component with an odd symmetry (when restricted to internal edges and $\epsilon$ hairs, as the sign on $\omega$ cancels out by the action on the cohomology class) or at least two isomorphic blown-up components wich have an odd number of internal edges plus $\epsilon$ hairs (again, the action on $\omega$ hairs cancels out).


The $(g,n)$ type of a decorated graph $G$ can be be computed from the $(g,n)$ type and the number of $\epsilon$, $\omega$ and $j$ hairs of each blown-up component. This means that we can first classify a set of blown-up components based on these parameters and afterwards all virtual blown-up representations with these components. \\


We describe the list of blown-up components generated in our script. It contains all the connected graphs with at least trivalent vertices, hairs labeled by $\epsilon,\omega$ or $j$ such that $0\leq 3(g-1)+3|\epsilon|+|\omega|+2|j| \leq E_{max}-25$, $|\epsilon|+|\omega|\geq 1$  and falling under exactly one of the following cases:
\begin{itemize}
    \item simple,
    \item simple and with a crossed $\omega$ hair,
    \item simple and with a crossed internal edge, possibly having maximum one multiple edge parallel to the crossed edge.
\end{itemize}

We also add manually the following seven graphs:
\[
    \begin{tikzpicture}
        \node (v) at (0,0) {$\omega$};
        \node (w) at (1,0) {$\omega$};
        \draw (v) edge[bend left=60, distance=.5cm] (w); 
    \end{tikzpicture}
    \hspace{0.4cm}
    \begin{tikzpicture}
        \node (v) at (0,0) {$\omega$};
        \node (w) at (1,0) {$\epsilon$};
        \draw (v) edge[bend left=60, distance=.5cm] (w); 
    \end{tikzpicture}
    \hspace{0.4cm}
    \begin{tikzpicture}
        \node (v) at (0,0) {$\epsilon$};
        \node (w) at (1,0) {$\epsilon$};
        \draw (v) edge[bend left=60, distance=.5cm] (w); 
    \end{tikzpicture}
    \hspace{0.4cm}
    \begin{tikzpicture}
        \node (w1) at (0,0) {$\omega$};
        \node (n) at (0,1.2) {$j$};
        \draw (w1) edge (n); 
    \end{tikzpicture}
    \hspace{0.4cm}
    \begin{tikzpicture}
        \node (w1) at (0,0) {$\epsilon$};
        \node (n) at (0,1.2) {$j$};
        \draw (w1) edge (n); 
    \end{tikzpicture}
    \hspace{0.4cm}
    \begin{tikzpicture}
        \node (w) at (0,0) {$\omega$};
        \node[int] (v) at (0,.8) {};
        \draw (w) edge (v);
        \draw (v) edge +(0,-.3) edge[loop, crossed, distance=0.8cm] (v);
    \end{tikzpicture}
    \hspace{0.4cm}
    \begin{tikzpicture}
        \node (w) at (0,0) {$\epsilon$};
        \node[int] (v) at (0,.8) {};
        \draw (w) edge (v) (v) edge[loop, crossed, distance=0.8cm] (v);
    \end{tikzpicture}
    \hspace{0.4cm}
    \begin{tikzpicture}
        \node (w) at (0,0) {$\omega$};
        \node[int] (v) at (0,.8) {};
        \draw (w) edge[crossed] (v);
        \draw (v) edge +(0,-.3) edge[loop, distance=0.8cm] (v);
    \end{tikzpicture}
\]

In order to make the task independent from the parameter $n$, we have to use a single anonymous label $j$ for all the hairs of the original graph. Recall that every way of labeling the $j$ hairs with numbers $1,...,n$ represents a distinct generator, but not in a unique way due to the presence of symmetries (which are required to fix the $j$ hairs).

In this list also lie many graphs that will be later killed by relations, for example the lone hair with two $\omega$ labels, $A_2$ graphs with a loop at $\tilde{v}$ or a a second $\omega$ hair incident to the crossed hair. Nonetheless, it is useful to keep also the redundant graphs in the list for completeness, and for later resolving the weight 2 and 3b) relations. Afterwards, they will be removed.

The blown-up components are wrapped in Python objects which compute at construction all graph parameters such as genus, valence at the special vertices, odd symmetries and Specht module contributions.


\subsection{Resolving weight 2 and 3b) relations} Graphs with a crossed internal edge (that will later generate the $B_1$ family) are grouped by the isomorphism class of their contraction at the crossed edge.
This yields groups of blown-up components such that, any weight 2 cohomology relations or the 3b) relation restrict between graphs in the same so called relation group.

It is important to keep in mind that not all decorated graphs with $\delta\{\substack{A\\A'}\}$ classes are present in our list. We have omitted the ones with loops or multiple edges incident to just one end of the crossed edge. Because they are killed by the 3b) relation or odd symmetries, they might also influence the relation group without appearing in our list.

There are three easy cases to handle automatically. If a weight 2 relation group has size one with a graph with odd symmetry, then it vanishes. If a group has size one and the contraction at the crossed edge doesn't have loops nor multiple edges, then it vanishes if and only if the single graph in the group has an odd symmetry. If a group is of minimal valence $n_{\tilde{v}}=4$, then it vanishes if and only if any of its graphs have odd symmetries or its contraction at the crossed edge has loops or multiple edges. For higher group sizes or higher valence, we manually check each weight 2 relation group and hardcode a basis into the script.

\begin{remark} \label{rmk:weight2vanishing}
    A blown-up component with crossed internal edge, a multiple edge and valence $4$ is equivalent to a blown-up component with a loop, and thus vanishes because of relation 3b). More generally, we note that if there exist two hairs or half-edges $s,t\in N_{\tilde{v}}$ in a graph $G_{\bar{v},B}^{\tilde{v}_1,\tilde{v}_2}$ with the property that it vanishes whenever the $\tilde{v}_1$,$\tilde{v}_2$ don't separate $s$ and $t$, then, for every other $x,y\in N_{\tilde{v}}$ distinct from $s,t$, the following relation holds:
\end{remark}
\[  
    \sum_{\substack{A\sqcup A'= N_{\tilde{v}}\\ s,x\in A, t,y\in A'}}
    \begin{tikzpicture}
        \node[int] (v) at (0,0) {};
        \node[int] (w) at (0.5,0) {};
        \node (s) at (-.1,.5)  {\tiny$s$};
        \node (t) at (.6,.5)  {\tiny$t$};
        \node (x) at (-.3,-.5) {\tiny$x$};
        \node (y) at (.8,-.5) {\tiny$y$};
        \draw (v) edge +(-.5,-.5) edge[dashed] +(-.5,.5) edge[dashed] +(-.5,.2) edge[dashed] +(-.5,-.2) edge[dashed] +(-.2,-.5) edge[crossed] (w) edge +(.1,.6)
        (w) edge +(.5,-.5) edge[dashed] +(.5,.5) edge[dashed] +(.5,.2) edge[dashed] +(.5,-.2) edge[dashed] +(.2,-.5) edge +(-.1,.6);
        \draw [decorate,decoration={brace,amplitude=5pt,mirror}]
        (-.6,.5) -- (-.6,-.5) node[midway,xshift=-1em]{$A$};
        \draw [decorate,decoration={brace,amplitude=5pt}]
        (1,.5) -- (1,-.5) node[midway,xshift=1em]{$A'$};
    \end{tikzpicture}
    =
    \sum_{\substack{A\sqcup A'= N_{\tilde{v}}\\ |A|,|A'|\geq 2 \\ x\in A, t,y\in A'}}
    \begin{tikzpicture}
        \node[int] (v) at (0,0) {};
        \node[int] (w) at (0.5,0) {};
        \node (t) at (.6,.5)  {\tiny$t$};
        \node (x) at (-.3,-.5) {\tiny$x$};
        \node (y) at (.8,-.5) {\tiny$y$};
        \draw (v) edge +(-.5,-.5) edge[dashed] +(-.5,.5) edge[dashed] +(-.5,.2) edge[dashed] +(-.5,-.2) edge[dashed] +(-.2,-.5) edge[crossed] (w) edge +(.1,.6)
        (w) edge +(.5,-.5) edge[dashed] +(.5,.5) edge[dashed] +(.5,.2) edge[dashed] +(.5,-.2) edge[dashed] +(.2,-.5) edge +(-.1,.6);
        \draw [decorate,decoration={brace,amplitude=5pt,mirror}]
        (-.6,.5) -- (-.6,-.5) node[midway,xshift=-1em]{$A$};
        \draw [decorate,decoration={brace,amplitude=5pt}]
        (1,.5) -- (1,-.5) node[midway,xshift=1em]{$A'$};
    \end{tikzpicture}
    =
    \sum_{\substack{A\sqcup A'= N_{\tilde{v}}\\ |A|,|A'|\geq 2 \\ x\in A, t,s\in A'}}
    \begin{tikzpicture}
        \node[int] (v) at (0,0) {};
        \node[int] (w) at (0.5,0) {};
        \node (s) at (.6,.5)  {\tiny$s$};
        \node (t) at (.4,.5)  {\tiny$t$};
        \node (x) at (-.3,-.5) {\tiny$x$};
        \draw (v) edge +(-.5,-.5) edge +(-.5,.5) edge[dashed] +(-.5,.2) edge[dashed] +(-.5,-.2) edge[dashed] +(-.2,-.5) edge[crossed] (w)
        (w) edge[dashed] +(.5,-.5) edge[dashed] +(.5,.5) edge[dashed] +(.5,.2) edge[dashed] +(.5,-.2) edge[dashed] +(.2,-.5) edge +(-.3,.6) edge +(.3,.6);
        \draw [decorate,decoration={brace,amplitude=5pt,mirror}]
        (-.6,.5) -- (-.6,-.5) node[midway,xshift=-1em]{$A$};
        \draw [decorate,decoration={brace,amplitude=5pt}]
        (1,.5) -- (1,-.5) node[midway,xshift=1em]{$A'$};
    \end{tikzpicture}
    =0  
\]
This observation can be employed to kill all graphs with crossed internal edge in excess 3 and 4 which have multiple edges and valence $n_{\tilde{v}}\geq 5$, except one. These are shown in \eqref{equ:weight2killings}, and they come in three different weight 2 relation groups. They all vanish, except for the last ones, which merely have a relation between them.

\begin{multline} \label{equ:weight2killings}
    \begin{tikzpicture}
        \node[int] (v1) at (0,0) {};
        \node[int] (v2) at (1,0) {};
        \node (w1) at (-.25,-.9) {$\omega$};
        \node (w2) at (.25,-.9) {$\omega$};
        \node (w3) at (1,-.9) {$\omega$};
        \draw (v1) edge[crossed] (v2) edge[bend left=40, distance=0.6cm] (v2) edge (w1) edge (w2) (v2) edge (w3);
    \end{tikzpicture}
    =0  \hspace{2cm}
    \begin{tikzpicture}
        \node[int] (v1) at (0,0) {};
        \node[int] (v2) at (1,0) {};
        \node[int] (v3) at (1.4,0) {};
        \node (w1) at (0,-.9) {$\omega$};
        \node (w2) at (.8,-.9) {$\omega$};
        \node (w3) at (1.2,-.9) {$\omega$};
        \node (w4) at (1.8,-.9) {$\omega$};
        \draw (v1) edge[crossed] (v2) edge[bend left=40, distance=0.6cm] (v2) edge (w1);
        \draw (v2) edge (w2) edge (v3);
        \draw (v3) edge (w3) edge (w4);
    \end{tikzpicture}
    =0    \hspace{.8cm}
    \begin{tikzpicture}
        \node[int] (v1) at (0,0) {};
        \node[int] (v2) at (1,0) {};
        \node[int] (v3) at (1.4,0) {};
        \node (w1) at (0,-.9) {$\omega$};
        \node (w2) at (.4,-.9) {$\omega$};
        \node (w3) at (1.2,-.9) {$\omega$};
        \node (w4) at (1.8,-.9) {$\omega$};
        \draw (v1) edge[crossed] (v2) edge[bend left=40, distance=0.6cm] (v2) edge (w1) edge (w2);
        \draw (v2) edge (v3);
        \draw (v3) edge (w3) edge (w4);
    \end{tikzpicture}
    =0  \\
    \begin{tikzpicture}
        \node[int] (v1) at (0,0) {};
        \node[int] (v2) at (1,0) {};
        \node (w1) at (-.25,-.9) {$\omega$};
        \node (w2) at (.25,-.9) {$\omega$};
        \node (n) at (1.4,1) {$j$};
        \draw (v1) edge[crossed] (v2)  edge[bend left=40, distance=0.6cm] (v2) edge (w1) edge (w2);
        \draw (v2) edge[bend right=30](n);
    \end{tikzpicture}
    =0 \hspace{.8cm}
    \begin{tikzpicture}
        \node[int] (v1) at (0,0) {};
        \node[int] (v2) at (1,0) {};
        \node (w1) at (0,-.9) {$\omega$};
        \node (w2) at (1,-.9) {$\omega$};
        \node (n) at (1.4,1) {$j$};
        \draw (v1) edge[crossed] (v2) edge[bend left=40, distance=0.6cm] (v2) edge (w1);
        \draw (v2) edge[bend right=30](n) edge (w2);
    \end{tikzpicture}
    =0 \hspace{2cm}
    \begin{tikzpicture}
        \node[int] (v1) at (0,0) {};
        \node[int] (v2) at (1,0) {};
        \node (w1) at (-.4,-.9) {$\omega$};
        \node (w2) at (.4,-.9) {$\omega$};
        \node (w3) at (1,-.9) {$\omega$};
        \node (w4) at (0,-.9) {$\omega$};
        \draw (v1) edge[crossed] (v2) edge[bend left=40, distance=0.6cm] (v2) edge (w1) edge (w2) edge (w4);
        \draw (v2) edge (w3);
    \end{tikzpicture}
    +
    \begin{tikzpicture}
        \node[int] (v1) at (0,0) {};
        \node[int] (v2) at (1,0) {};
        \node (w1) at (-.25,-.9) {$\omega$};
        \node (w2) at (.25,-.9) {$\omega$};
        \node (w3) at (.75,-.9) {$\omega$};
        \node (w4) at (1.25,-.9) {$\omega$};
        \draw (v1) edge[crossed] (v2) edge[bend left=40, distance=0.6cm] (v2) edge (w1) edge (w2);
        \draw (v2) edge (w3) edge (w4);
    \end{tikzpicture}
    =0
\end{multline}

Remark \ref{rmk:weight2vanishing} can also be used to kill other graphs, such as ones where the crossed edge is part of a triangle or where two half-edges being in at the same endpoint would create an odd symmetry:

\begin{equation}
    \begin{tikzpicture}
        \node[int] (v1) at (0,0) {};
        \node[int] (v2) at (1,0) {};
        \node[int] (v3) at (.5,.5) {};
        \node (w1) at (-.25,-.9) {$\omega$};
        \node (w2) at (.25,-.9) {$\omega$};
        \node (w3) at (.9,-.7) {$\omega$};
        \node (w4) at (1.25,-.9) {$\omega$};
        \draw (v1) edge[crossed] (v2) edge (v3) edge (w1) edge (w2);
        \draw (v2) edge (v3) edge (w4);
        \draw (v3) edge (w3);
    \end{tikzpicture}
    =0  \hspace{1cm}
    \begin{tikzpicture}
        \node[int] (v1) at (0,0) {};
        \node[int] (v2) at (1,0) {};
        \node[int] (v3) at (1.75,0) {};
        \node[int] (v4) at (2.5,0) {};
        \node (w1) at (-.25,-.9) {$\omega$};
        \node (w2) at (.25,-.9) {$\omega$};
        \node (w3) at (.75,-.9) {$\omega$};
        \node (w4) at (1.25,-.9) {$\omega$};
        \node (w5) at (1.75,-.9) {$\omega$};
        \node (w6) at (2.25,-.9) {$\omega$};
        \node (w7) at (2.75,-.9) {$\omega$};
        \draw (v1) edge(v2)  edge (w1) edge (w2) (v2) edge[crossed] (v3)  edge (w3) edge (w4);
        \draw (v3) edge (v4)  edge (w5) (v4) edge(w6)edge(w7);
    \end{tikzpicture}
    =0.
\end{equation}

\eqref{equ:weight2groupof4} is another example in excess 4 of a weight 2 relation group. It contains the four graphs which appear as terms in the equation, and one checks that all the relations turn out equivalent to the one displayed. Thus, a basis for this group is determined by any three graphs.
\begin{equation} \label{equ:weight2groupof4}
    \begin{tikzpicture}
        \node[int] (v1) at (0,0) {};
        \node[int] (v2) at (.75,0) {};
        \node[int] (v3) at (1.5,0) {};
        \node (j) at (.75,.9) {\small$j$};
        \node (w1) at (-.4,-.8) {\small$\omega$};
        \node (w2) at (0,-.8) {\small$\omega$};
        \node (w3) at (.4,-.8) {\small$\omega$};
        \node (w4) at (1.3,-.8) {\small$\omega$};
        \node (w5) at (1.7,-.8) {\small$\omega$};
        \draw (v1) edge[crossed](v2) edge(w1)edge(w2)edge(w3);
        \draw (v2) edge(v3) edge(j)  (v3) edge(w4)edge(w5);
    \end{tikzpicture} +
    \begin{tikzpicture}
        \node[int] (v1) at (0,0) {};
        \node[int] (v2) at (.75,0) {};
        \node[int] (v3) at (1.5,0) {};
        \node (j) at (.75,.9) {\small$j$};
        \node (w1) at (-.2,-.8) {\small$\omega$};
        \node (w2) at (.2,-.8) {\small$\omega$};
        \node (w3) at (.75,-.8) {\small$\omega$};
        \node (w4) at (1.3,-.8) {\small$\omega$};
        \node (w5) at (1.7,-.8) {\small$\omega$};
        \draw (v1) edge[crossed](v2) edge(w1)edge(w2);
        \draw (v2) edge(v3) edge(w3) edge(j)  (v3) edge(w4)edge(w5);
    \end{tikzpicture} -
    \begin{tikzpicture}
        \node[int] (v1) at (0,0) {};
        \node[int] (v2) at (.75,0) {};
        \node[int] (v3) at (1.5,0) {};
        \node (j) at (1.5,.9) {\small$j$};
        \node (w1) at (-.2,-.8) {\small$\omega$};
        \node (w2) at (.2,-.8) {\small$\omega$};
        \node (w3) at (.75,-.8) {\small$\omega$};
        \node (w4) at (1.3,-.8) {\small$\omega$};
        \node (w5) at (1.7,-.8) {\small$\omega$};
        \draw (v1) edge(v2) edge(w1)edge(w2);
        \draw (v2) edge[crossed](v3) edge(w3)  (v3) edge(w4)edge(w5) edge(j);
    \end{tikzpicture} -
    \begin{tikzpicture}
        \node[int] (v1) at (0,0) {};
        \node[int] (v2) at (.75,0) {};
        \node[int] (v3) at (1.5,0) {};
        \node (j) at (1.5,.9) {\small$j$};
        \node (w1) at (-.4,-.8) {\small$\omega$};
        \node (w2) at (.1,-.8) {\small$\omega$};
        \node (w3) at (.5,-.8) {\small$\omega$};
        \node (w4) at (1,-.8) {\small$\omega$};
        \node (w5) at (1.5,-.8) {\small$\omega$};
        \draw (v1) edge(v2) edge(w1)edge(w2);
        \draw (v2) edge[crossed](v3) edge(w3)edge(w4)  (v3) edge(w5) edge(j);
    \end{tikzpicture}
    =0
\end{equation}

\subsection{Generating virtual blown-up representations}  First we remove odd symmetries, weight 2 and 3b) relations, and $A_2$ graphs which are related to $B_{irr}$ graphs through 3a) and 4) relations, obtaining the list called \emph{blown-up-components-basis} in our script. Then, we build so called \textit{virtual blown-up graphs}. These are all unordered lists $\{C_1,...,C_k\}$ of graphs in blown-up-components-basis with:
\begin{itemize}
    \item exactly one component $C_i$ is crossed,
    \item $\omega_i\leq 11$, $e(C_i)=3(g_i-1)+3\epsilon_i+\omega_i+2n_i \geq 1$, $\sum_i e(C_i) \leq E_{max}-25$,
    \item no duplicates of components that create an odd symmetry when exchanged by an automorphism.
\end{itemize}
    
We explain the last condition. Two isomorphic blown-up components (which don't individually have an odd symmetry already) create an odd symmetry if and only if they have no $j$ hairs (because automorphisms don't exchange the $j$ hairs) and have an odd number of internal edges plus $\epsilon$ hairs. Also the lone hair with two $\epsilon$ labels is of this type.

\subsection{Weight 11 relations} The first step of the program is actually to generate so called \textit{unmarked blown-up components}, which have a unique label $u$ for the half-edges that where incident to $\bar{v}$. Only afterwards, blown-up components with $\omega$ and $\epsilon$ hairs are generated by marking the $u$ hairs in every possible way. The unmarked component which was used as a template for the marking is stored in each respective blown-up component. In so doing, a list of blown-up components can be obtained from another by permuting its $\omega$ and $\epsilon$ hairs if and only if they have the same list of unmarked components and same number of total $\epsilon$ hairs, which can be instantly checked instead of testing for isomorphism.

Thus, we group the virtual blown-up representations (of families $B_1$,$B_{irr}$) by their list of unmarked components and number of $\epsilon$ hairs. Every weight 11 relation ren restricts between graphs within the same so called relation group.
In addition, this can be done at the virtual level without pinning down a $(g,n)$ pair in the excess class by ignoring in the grouping process the components obtained as a marking of a lone $j$ hair or a tripod. \\

For example, the unmarked component in \eqref{equ:weight11relationexample} determines in excess 3 (the excess determines the number of $\epsilon$ hairs) a weight 11 relation group of three virtual blown-up graphs. Note that not all graphs in the same relation group must have the same existence range: the first and third exist for $(4,8),(6,5),(8,2)$, whereas the middle one only for $(6,5),(8,2)$.
The corresponding weight 11 relation is an integer-weighted alternating sum of the three or two graphs, depending upon the $(g,n)$ pair. Thus, discarding the third graph leaves us with a basis of the relation group for all three $(g,n)$ pairs.
\begin{equation} \label{equ:weight11relationexample}
    \begin{tikzpicture}
        \node (w1) at (0,-.5) {$u$};
        \node (w2) at (.5,-.5) {$u$};
        \node (w3) at (1,-.5) {$u$};
        \node (w4) at (1.5,-.5) {$u$};
        \node[int] (v1) at (.25,.5) {};
        \node[int] (v2) at (1.25,.5) {};
        \draw(v1) edge[crossed](v2)edge(w1)edge(w2) (v2) edge(w3)edge(w4);
    \end{tikzpicture}
    \xlongrightarrow{\substack{\text{marked components}\\\text{with one $\epsilon$ hair}}} \hspace{.8cm}
    \begin{tikzpicture}
        \node (w1) at (0,-.5) {$\omega$};
        \node (w2) at (.5,-.5) {$\omega$};
        \node (w3) at (1,-.5) {$\omega$};
        \node (w4) at (1.5,-.5) {$\omega$};
        \node[int] (v1) at (.25,.5) {};
        \node[int] (v2) at (1.25,.5) {};
        \draw(v1) edge[crossed](v2)edge(w1)edge(w2) (v2) edge(w3)edge(w4);
        \node (e) at (-.6,-.5) {$\epsilon$};
        \node (j) at (-.6,.7) {$j$};
        \draw(e) edge(j);
    \end{tikzpicture},
    \begin{tikzpicture}
        \node (w1) at (0,-.5) {$\omega$};
        \node (w2) at (.5,-.5) {$\omega$};
        \node (w3) at (1,-.5) {$\omega$};
        \node (w4) at (1.5,-.5) {$\omega$};
        \node[int] (v1) at (.25,.5) {};
        \node[int] (v2) at (1.25,.5) {};
        \draw(v1) edge[crossed](v2)edge(w1)edge(w2) (v2) edge(w3)edge(w4);
        \node (ww1) at (-1.5,-.5) {$\omega$};
        \node (ww2) at (-1,-.5) {$\omega$};
        \node (ww3) at (-.5,-.5) {$\epsilon$};
        \node[int] (vv) at (-1,.5) {};
        \draw(vv) edge(ww1)edge(ww2)edge(ww3);
    \end{tikzpicture},
    \begin{tikzpicture}
        \node (w1) at (0,-.5) {$\omega$};
        \node (w2) at (.5,-.5) {$\omega$};
        \node (w3) at (1,-.5) {$\omega$};
        \node (w4) at (1.5,-.5) {$\epsilon$};
        \node[int] (v1) at (.25,.5) {};
        \node[int] (v2) at (1.25,.5) {};
        \draw(v1) edge[crossed](v2)edge(w1)edge(w2) (v2) edge(w3)edge(w4);
    \end{tikzpicture}
\end{equation}

\subsection{Weight 13 relations} The process is similar to weight 11, only that now we have to determine virtual blown-up graphs up to permutation of the $\omega$ hairs with the two hairs or half-edges at $\tilde{v}$. This is done again by blowing up at $\tilde{v}$:  the crossed edge and $\tilde{v}$ are discarded, the two newly created hairs are marked $\omega$, and we end up with a list of uncrossed blown-up components with a total of 12 $\omega$ hairs.

Thus, we group by the list of so called \textit{$A_2$ components} to determine weight 13 relation groups of $A_2$ virtual blown-up representations. Again, this can be done at the virtual level by ignoring the lone $\omega-j$ hairs and the tripods with $\omega$ labels. \\

For example, the two blown-up components in \eqref{equ:weight13relationexample} determine in excess 4 a weight 13 relation group of seven virtual blown-up representations. One can check that they all have the same list of blown-up components after removing $\tilde{v}$ (up to lone hairs and tripods).  The graphs with two tripods glued to $\tilde{v}$ (or with two hairs of the same tripod) are not listed because they would have an odd symmetry. Note again that not all virtual blown-up representations have the same existence range.

\begin{multline} \label{equ:weight13relationexample}
    \begin{tikzpicture}
        \node (w1) at (0,-.5) {\small$\omega$};
        \node (w2) at (.4,-.5) {\small$\omega$};
        \node (w3) at (.8,-.5) {\small$\omega$};
        \node (w4) at (1.2,-.5) {\small$\omega$};
        \node[int] (v1) at (.2,.5) {};
        \node[int] (v2) at (1,.5) {};
        \draw(v1) edge(v2)edge(w1)edge(w2) (v2) edge(w3)edge(w4);
        \node (w5) at (1.6,-.5) {\small$\omega$};
        \node (w6) at (2,-.5) {\small$\omega$};
        \node[int] (v5) at (1.8,.3) {};
        \node (j) at (1.8,1.2) {\small $j$};
        \draw (v5) edge(j)edge(w5)edge(w6);
    \end{tikzpicture}
    \xlongrightarrow{\text{$A_2$ blown-ups}}
    \begin{tikzpicture}
        \node (w1) at (0,-.5) {\small$\omega$};
        \node (w2) at (.4,-.5) {\small$\omega$};
        \node (w3) at (.8,-.5) {\small$\omega$};
        \node[int] (v1) at (.2,.5) {};
        \node[int] (v2) at (1,.5) {};
        \draw(v1) edge(v2)edge(w1)edge(w2) (v2) edge(w3);
        \node (w6) at (2.4,-.5) {\small$\omega$};
        \node[int] (v5) at (2.2,.3) {};
        \node (j) at (2.2,1.2) {\small $j$};
        \draw (v5) edge(j)edge(w6);
        \node (ww) at (1.6,-.5) {\small$\omega$};
        \node[int] (CR) at (1.6,.3) {};
        \draw (CR) edge[crossed](ww) edge(v2)edge(v5);
    \end{tikzpicture},
    \begin{tikzpicture}
        \node (w1) at (0,-.5) {\small$\omega$};
        \node (w2) at (.4,-.5) {\small$\omega$};
        \node (w3) at (.8,-.5) {\small$\omega$};
        \node[int] (v1) at (.2,.5) {};
        \node[int] (v2) at (1,.5) {};
        \draw(v1) edge(v2)edge(w1)edge(w2) (v2) edge(w3);
        \node (w5) at (2,-.5) {\small$\omega$};
        \node (w6) at (2.4,-.5) {\small$\omega$};
        \node[int] (v5) at (2.2,.3) {};
        \node (j) at (2.2,1.2) {\small $j$};
        \draw (v5) edge(j)edge(w5)edge(w6);
        \node (ww) at (1.6,-.5) {\small$\omega$};
        \node[int] (CR) at (1.6,.3) {};
        \node (j1) at (1.6,1.2) {\small $j$};
        \draw (CR) edge[crossed](ww) edge(v2)edge(j1);
    \end{tikzpicture},
    \begin{tikzpicture}
        \node (w1) at (0,-.5) {\small$\omega$};
        \node (w2) at (.4,-.5) {\small$\omega$};
        \node (w3) at (.8,-.5) {\small$\omega$};
        \node[int] (v1) at (.2,.5) {};
        \node[int] (v2) at (1,.5) {};
        \draw(v1) edge(v2)edge(w1)edge(w2) (v2) edge(w3);
        \node (w5) at (2.8,-.5) {\small$\omega$};
        \node (w6) at (3.2,-.5) {\small$\omega$};
        \node[int] (v5) at (3,.3) {};
        \node (j) at (3,1.2) {\small $j$};
        \draw (v5) edge(j)edge(w5)edge(w6);
        \node (ww) at (1.6,-.5) {\small$\omega$};
        \node[int] (CR) at (1.6,.3) {};
        \node (ww1) at (2,-.5) {$\omega$};
        \node (ww2) at (2.4,-.5) {$\omega$};
        \node[int] (vv) at (2,.3) {};
        \draw(vv) edge(ww1)edge(ww2);
        \draw (CR) edge[crossed](ww) edge(v2)edge(vv);
    \end{tikzpicture}, \\
    \begin{tikzpicture}
        \node (w1) at (0,-.5) {\small$\omega$};
        \node (w2) at (.4,-.5) {\small$\omega$};
        \node (w3) at (.8,-.5) {\small$\omega$};
        \node (w4) at (1.2,-.5) {\small$\omega$};
        \node[int] (v1) at (.2,.5) {};
        \node[int] (v2) at (1,.5) {};
        \draw(v1) edge(v2)edge(w1)edge(w2) (v2) edge(w3)edge(w4);
        \node (w6) at (2.4,-.5) {\small$\omega$};
        \node[int] (v5) at (2.2,.3) {};
        \node (j) at (2.2,1.2) {\small $j$};
        \draw (v5) edge(j)edge(w6);
        \node (ww) at (1.6,-.5) {\small$\omega$};
        \node[int] (CR) at (1.6,.3) {};
        \node (j1) at (1.6,1.2) {\small $j$};
        \draw (CR) edge[crossed](ww) edge(j1)edge(v5);
    \end{tikzpicture},
    \begin{tikzpicture}
        \node (w1) at (0,-.5) {\small$\omega$};
        \node (w2) at (.4,-.5) {\small$\omega$};
        \node (w3) at (.8,-.5) {\small$\omega$};
        \node (w4) at (1.2,-.5) {\small$\omega$};
        \node[int] (v1) at (.2,.5) {};
        \node[int] (v2) at (1,.5) {};
        \draw(v1) edge(v2)edge(w1)edge(w2) (v2) edge(w3)edge(w4);
        \node (w6) at (3.2,-.5) {\small$\omega$};
        \node[int] (v5) at (3,.3) {};
        \node (j) at (3.2,1.2) {\small $j$};
        \draw (v5) edge(j)edge(w6);
        \node (ww1) at (1.6,-.5) {$\omega$};
        \node (ww2) at (2,-.5) {$\omega$};
        \node[int] (vv) at (2,.3) {};
        \draw(vv) edge(ww1)edge(ww2);
        \node (ww) at (2.4,-.5) {\small$\omega$};
        \node[int] (CR) at (2.4,.3) {};
        \draw (CR) edge[crossed](ww) edge(vv)edge(v5);
    \end{tikzpicture},
    \begin{tikzpicture}
        \node (w1) at (0,-.5) {\small$\omega$};
        \node (w2) at (.4,-.5) {\small$\omega$};
        \node (w3) at (.8,-.5) {\small$\omega$};
        \node (w4) at (1.2,-.5) {\small$\omega$};
        \node[int] (v1) at (.2,.5) {};
        \node[int] (v2) at (1,.5) {};
        \draw(v1) edge(v2)edge(w1)edge(w2) (v2) edge(w3)edge(w4);
        \node (w5) at (2.8,-.5) {\small$\omega$};
        \node (w6) at (3.2,-.5) {\small$\omega$};
        \node[int] (v5) at (3,.3) {};
        \node (j) at (3,1.2) {\small $j$};
        \draw (v5) edge(j)edge(w5)edge(w6);
        \node (ww1) at (1.6,-.5) {$\omega$};
        \node (ww2) at (2,-.5) {$\omega$};
        \node[int] (vv) at (2,.3) {};
        \draw(vv) edge(ww1)edge(ww2);
        \node (jj) at (2.4,1.2) {\small$j$};
        \node (ww) at (2.4,-.5) {\small$\omega$};
        \node[int] (CR) at (2.4,.3) {};
        \draw (CR) edge[crossed](ww) edge(vv) edge(jj);
    \end{tikzpicture},
    \begin{tikzpicture}
        \node (w1) at (0,-.5) {\small$\omega$};
        \node (w2) at (.4,-.5) {\small$\omega$};
        \node (w3) at (.8,-.5) {\small$\omega$};
        \node (w4) at (1.2,-.5) {\small$\omega$};
        \node[int] (v1) at (.2,.5) {};
        \node[int] (v2) at (1,.5) {};
        \draw(v1) edge(v2)edge(w1)edge(w2) (v2) edge(w3)edge(w4);
        \node (w5) at (2,-.5) {\small$\omega$};
        \node (w6) at (2.4,-.5) {\small$\omega$};
        \node[int] (v5) at (2.2,.3) {};
        \node (j) at (2.2,1.2) {\small $j$};
        \draw (v5) edge(j)edge(w5)edge(w6);
        \node (jj) at (1.6,1.2) {\small$j$};
        \node (jjj) at (1.2,1.2) {\small$j$};
        \node (ww) at (1.6,-.5) {\small$\omega$};
        \node[int] (CR) at (1.6,.3) {};
        \draw (CR) edge[crossed](ww) edge(jjj) edge(jj);
    \end{tikzpicture},
\end{multline}

\subsection{Completing virtual blown-up representations} The script also offers the possibility to focus on a particular $(g,n)$ pair by \textit{completing} each virtual blown-up representation in an excess. This means focusing on the ones with $(g,n)$ in their existence range and adding the right amount of $\omega-j$ hairs and tripods to their list of blown-up components.

In the results folder are also saved lists of completed blown-up representations for every $(g,n)$ pair with $E(g,n)\leq 29$; but, again, we have not verified lists in excess $29$.

Having at disposal the Specht module contributions of each completed blown-up representation, one can compute automatically the Euler Characteristic for each $(g,n)$ pair individually, but with a caveat: weight 11 and 13 relations haven't been resolved, so one must account for that.

\bibliographystyle{amsplain}
\bibliography{refs}

\includepdf[pages=1,fitpaper=true,pagecommand={}]{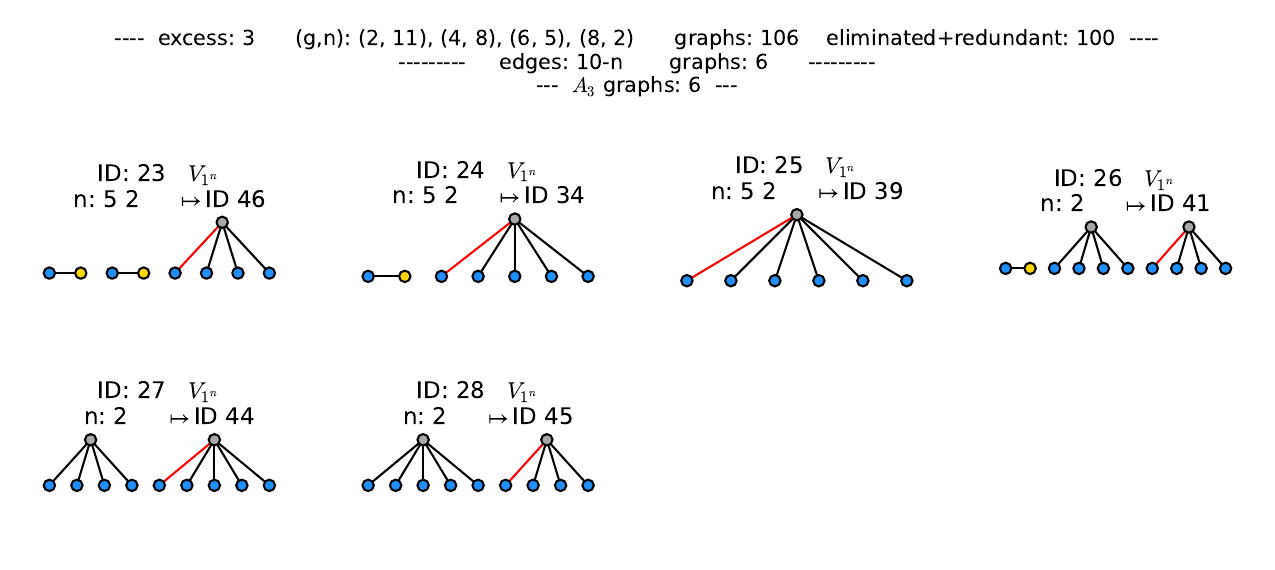}
\includepdf[pages=2,fitpaper=true,pagecommand={}]{Results/Excess3/VirtualBlowups_excess3.pdf}
\includepdf[pages=3-5,fitpaper=true,pagecommand={}]{Results/Excess3/VirtualBlowups_excess3.pdf}
\includepdf[pages=6,fitpaper=true,pagecommand={}]{Results/Excess3/VirtualBlowups_excess3.pdf}
\includepdf[pages=7-8,fitpaper=true,pagecommand={}]{Results/Excess3/VirtualBlowups_excess3.pdf}
\includepdf[pages=9,fitpaper=true,pagecommand={}]{Results/Excess3/VirtualBlowups_excess3.pdf}

\end{document}